\newtheorem{theorem}{Theorem}[section]
\newtheorem{fact}[theorem]{Fact}
\newtheorem{corollary}[theorem]{Corollary}
\newtheorem{lemma}[theorem]{Lemma}
\newtheorem{definition}[theorem]{Definition}
\newtheorem{remark}[theorem]{Remark}
\newtheorem{expr}{Experiment}
\newtheorem{Algo}{Algorithm}
\newcommand{\sdotfill}{\textcolor[rgb]{0.8,0.8,0.8}{\dotfill}} 
\newcommand{\bu}{\boldsymbol{u}}
\newcommand{\bv}{\boldsymbol{v}}
\newcommand{\meanA}{\overline{A}}
\newcommand{\meanB}{\overline{B}}
\newcommand{\eps}{\varepsilon}
\newcommand{\OO}{\mathcal{O}}
\newcommand{\E}{\ensuremath{\mathrm{E}}}
\newcommand{\SBM}{\ensuremath{\mathrm{SBM}}}
\newcommand{\SSBM}{\ensuremath{\mathrm{SSBM}}}
\newcommand{\USSBM}{\ensuremath{\mathrm{USSBM}}}
    \renewcommand*{\bm}[1]{#1}
\definecolor{corall}{RGB}{200,100,100}
\definecolor{teall}{RGB}{0,212,212}
\newcommand{\smax}{s^{*}}
\newcommand{\smin}{s_{\min}}
\newcommand{\W}{W^t_{\ell_0,\ell_{t+1}}}
\newcommand{\Ell}{\mathcal{L}}
\newcommand{\Z}{Z^t_{a,b}}
\newcommand{\mP}[1]{\mathcal{P}^{(#1)}}
\newcommand{\mmP}{\mathcal{P}}
\newcommand{\mX}{\mathcal{X}}
\newcommand{\abs}[1]{\left|#1\right|}
\definecolor{mypink}{RGB}{219, 48, 122}
\title{Detecting Hidden Communities by Power Iterations with Connections to Vanilla Spectral Algorithms}
\author{
Chandra Sekhar Mukherjee 
\thanks{Research supported by NSF CAREER award 2141536.}
\\
Department of Computer Science\\
University of Southern California\\
\texttt{chandrasekhar.mukherjee@usc.edu}
\and
Jiapeng Zhang
\thanks{Research supported by NSF CAREER award 2141536.}
\\
Department of Computer Science\\
University of Southern California\\
\texttt{jiapengz@usc.edu}
}
\date{\today}
\begin{document}
\maketitle

\begin{abstract}
Community detection in the stochastic block model is one of the central problems of graph clustering. Since its introduction by Holland, Laskey and Leinhardt (Social Networks, 1983), many subsequent papers have made great strides in solving and understanding this model. In this setup, spectral algorithms have been one of the most widely used frameworks for the design of clustering algorithms. However, despite the long history of study, there are still unsolved challenges.  
One of the main open problems is the design and analysis of ``simple''(vanilla) spectral algorithms, especially when the number of communities is large. 

In this paper, we provide two algorithms. The first one is based on the power-iteration method. This is a simple algorithm which only compares the rows of the powered adjacency matrix. Our algorithm performs optimally (up to logarithmic factors) compared to the best known bounds in the dense graph regime by Van Vu (Combinatorics Probability and Computing, 2018). Furthermore, our algorithm is also robust to the ``small cluster barrier'', recovering large clusters in the presence of an arbitrary number of small clusters. Then based on a connection between the powered adjacency matrix and eigenvectors, we provide a ``vanilla'' spectral algorithm for large number of communities in the balanced case. This answers an open question by Van Vu (Combinatorics Probability and Computing, 2018) in the balanced case. Our methods also partially solve technical barriers discussed by Abbe, Fan, Wang and Zhong (Annals of Statistics, 2020).

In the technical side, we introduce a random partition method to analyze each entry of a powered random matrix. This method can be viewed as an eigenvector version of Wigner's trace method. Recall that Wigner's trace method links the trace of powered matrix to eigenvalues. Our method links the whole powered matrix to the span of eigenvectors. We expect our method to have more applications in random matrix theory.

\end{abstract}

\section{Introduction}
Community detection (graph clustering) is a fundamental problem in computer science that has applications in diverse areas including but not limited to physics, social sciences and biology. Among others, the \emph{stochastic block model} (SBM) is one of the most widely used frameworks for studying community detection, offering both theoretical arena for rigorous analysis of performance of clustering algorithms as well serving as a simulation benchmark in practice.
Before proceeding ahead we briefly describe the model. There is a set of $n$ vertices $V=\{v_1, \ldots ,v_n\}$ with a hidden partition $\{V_1, \ldots ,V_k\}$. A graph $G$ is then sampled through a random process, governed by a $k \times k$ symmetric matrix $W$. For any pair of vertices $v \in V_i$ and $u \in V_j$ an edge is added between them independently at random with probability $W_{i,j}$. This sampling process is called $SBM(n,\{V_i\}_{i=1}^k,W)$
Naturally, intra cluster edge probabilities are higher than inter cluster edge probabilities. Given such a sampled graph $G(V,E)$ (its adjacency matrix being denoted as $A$), the task is to recover the hidden partition. 

From the perspective of theoretical computer science,
there has been a lot of progress towards understanding the information theoretic and computational limits of solving the SBM problem since the 1980s~\cite{holland1983stochastic,bui1987graph,dyer1989solution,boppana1987eigenvalues}. We refer to the recent survey by Abbe \cite{abbe2017community} for a comprehensive list of such results.
In this paper we study one of the prominent problems in relation to SBM, 
which is designing of simple spectral algorithm for community detection.
Spectral algorithms have a long history of being used in the random graph paradigm~\cite{boppana1987eigenvalues, alon1998finding, mcsherry2001spectral, vu2018simple, abbe2020entrywise, mukherjee2022recovering}. 
Several papers in this area have concentrated on finding simplest spectral algorithms, (sometimes called vanilla algorithms \cite{abbe2020entrywise}) that can solve the SBM problem~\cite{mcsherry2001spectral,vu2018simple,abbe2020entrywise}.

\paragraph{Vanilla spectral algorithms: motivation and challenges}
Spectral algorithms broadly refer to algorithms that analyze the eigenvectors of the (random) adjacency matrix $A$. 
In this direction most works either analyze the entries of the eigenvectors or the projection of $A$ onto the eigenspace of the top eigenvectors. In general, there are several steps that precede and/or follow the obtaining of the eigenvectors/eigenspace-projection. For example, the projection is sometimes preceded by trimming steps (such as random partition in \cite{vu2018simple}), and the post projection embedding is then subjected to further iterative cleaning  (\cite{Yun2014AccurateCD}) , or other  clustering techniques such as K-means  (\cite{JingconsistentSBM}) are then implemented. 
The goal of obtaining a ``vanilla'' spectral algorithm is to avoid such steps. On a high level, there are two motivations. 
The first is to understand if an algorithm that just applies some threshold on the entries of the eigenvectors or the rows of the projected matrix is able to recover the partition. This also gives intuition about power of spectral algorithms used in practice that are often ``vanilla'' in this sense (such as PCA). 

The next is technical motivation. As we shall explain going forward, the difficulty in obtaining a provably correct vanilla algorithm is due to certain technical barriers in random matrix theory. Thus, analysis of vanilla spectral algorithm promises to improve our understanding of these hurdles which may have application in other problems/domains. One may refer to Section~\ref{sec: contribution-2} for a more detailed description on motivations and progress made so far.

Although there have been several efforts and progresses made towards designing such algorithms~\cite{abbe2020entrywise,Eldrige-Unperturbed,vu2018simple}, the problem of obtaining a vanilla spectral algorithm for large $k$ remains open. 
We describe the approaches and limitations of existing work in Section~\ref{sec: connections to others}.

Motivated by these questions and hurdles, in this paper we continue on the direction of obtaining vanilla spectral algorithms for SBM through resolving barriers in random matrix theory.  In our first attempt, we focus on the symmetric SBM framework ($\SSBM$).

\begin{definition}[Symmetric SBM]
\label{def:ssbm}
In this model, given an $n$-vertex set $V$
with a hidden partition $V=\cup_{i=1}^k V_i$ such that $V_i \cap V_j= \emptyset $ for all $i \ne j$, we say a graph $G=(V,E)$ is sampled from $\SSBM(n,k,p,q)$, if for all pairs of vertices $v_i,v_j \in V$, 
\begin{itemize}
    \item an edge $(v_i,v_j)$ is added independently with probability $p$, if $v_i,v_j \in V_{\ell}$ for some $\ell$;
    \item an edge $(v_i,v_j)$ is added independently with probability $q$,  otherwise. 
\end{itemize}
\end{definition}

Although $\SSBM$ is a basic version of $\SBM$, due to its simplicity it's often used as a starting point for design of community detection algorithms, as in~\cite{abbe2020entrywise}. 
Furthermore, we work in the ``dense graph'' setting (
$q\gg \frac{\log n}{n}$) 
which is commonly the area of interest for spectral algorithms~\cite{mcsherry2001spectral,vu2018simple,abbe2020entrywise}.

Against this backdrop, we now describe the results of our paper on a high level.

\subsection{Our Contribution}

Our contribution is two part. 

In the algorithm part, we design a simple algorithm for the $\SSBM$ model based on powering of the adjacency matrix of $G$.  Our algorithm is simple,  and matches the best known algorithms for large $k$~\cite{vu2018simple,mukherjee2022recovering} upto logarithmic factor even in the presence of an arbitrary number of small (size $<< \sqrt{n}$) clusters.  Next, by connecting power matrix and eigenvectors of symmetric matrix, we convert our power iteration algorithm into a ``vanilla'' spectral algorithm for the balanced $\SSBM$ case, making significant progress w.r.t open questions (in the balanced case) raised by \cite{vu2018simple} and \cite{abbe2020entrywise}.
More details will be discussed in Section \ref{sec: contribution-2}.

In doing so, we also make contribution in technical analysis.  We give an analysis of eigenspaces of random matrices by using power matrix, providing another perspective to understand random matrices.  Our analysis is built on a random partition idea.  
We discuss more connections between our analysis and known analysis in Section \ref{sec: connections to others}.

\subsubsection{Our Algorithm to Recover the Largest Partition}
\label{sec:contribution-1}
Our first contribution is a simple algorithm based on the power iteration method. Given the adjacency matrix $A$ of a graph $G$ drawn in from $\SSBM(n,k,p,q)$, we obtain $B$ by subtracting each entry of $A$ by $q$.
That is, $B:=A-q\cdot 1^{n \times n}$, where $1^{n \times n}$ is all one $n \times n$ matrix . We show that the largest partition of $G$ (under reasonable constraints) can be recovered by comparing the euclidean distance of rows of $B^r$ for $r \approx \log n$. We describe the procedure in Algorithm~\ref{alg:powermethod}.

For a vertex $v_i\in V$, let the $i$-th row of $A$ (and $B$) correspond to $v_i$, and $V_1$ be the largest (hidden) partition of $G$. For any fixed $v_i \in V_1$, we show that there is a threshold $\Delta$ such that with high probability for all $v_j \in V_1$, the distance between the $i$-th and $j$-th row of $B^r$ is less than $\Delta$. Otherwise ($v_j\not\in V_1$) the distance is larger than $1.1\Delta$, which is sufficient to recover $V_1$. 
\begin{algorithm}
\caption{\textsc{Detecting Communities by Power Iterations}}
\label{alg:powermethod}
\begin{algorithmic}[1]
\STATE \textbf{Input:} A graph $G$ and parameters $p,q>0$.
\STATE Let $A$ be the adjacent matrix of $G$
\STATE $B\gets A - q\cdot \mathbf{1}^{n\times n}$ \COMMENT{ (Here $\mathbf{1}^{n\times n}$ is the all $1$ matrix.)}
\STATE Let $\Delta>0$ and $r>1$ be parameters \COMMENT{ (We will explain how to choose them later)}
\FOR{ $v_i,v_j \in G$}
\IF{$\|B^{r}_i - B^{r}_j\|_{2}\leq \Delta$}
\STATE Put $v_i$ and $v_j$ in a same cluster \COMMENT{($B^r_i$ represents the $i$-th row of $B^r$)}
\ENDIF
\ENDFOR
\STATE Output the sets thus formed.
\end{algorithmic}
\end{algorithm}

The correctness of Algorithm~\ref{alg:powermethod} can be formalized as follows. 
\begin{theorem}[Recovering largest cluster]
\label{thm: main}
There are constants $C, C_0>0$ such that the following holds. 
Let $ p,q \le 0.75$ be parameters 
such that $\max \{p(1-p),q(1-q) \} \ge C_0(\log n)/n.$  Let $G$ be a random graph sampled from $\SSBM(n,k,p,q)$, and let $\smax$ be the size of the largest cluster. If 
$\smax \ge C \cdot (\log n)^7 \cdot \sqrt{p(1-q)} \cdot \sqrt{n}/(p-q)$, then with probability $1-\OO(1/n)$ one of the cluster output by Algorithm~\ref{alg:powermethod} is the largest cluster of $G$ for suitable choice of $\Delta$ and $r$.
\end{theorem}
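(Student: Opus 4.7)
The plan is to compare $B^r$ to its expectation $\bar B := \mathbb{E}[B]$ raised to the $r$-th power, row by row. Writing $B = \bar B + R$ with $R$ centered and independent (above the diagonal), I would first compute $\bar B^r$ exactly. Because $\bar B$ is block-diagonal with block $(p-q) J_{s_\ell}$ on cluster $V_\ell$ of size $s_\ell$ (and the off-block entries of $\bar B$ vanish), the identity $J_{s_\ell}^r = s_\ell^{r-1} J_{s_\ell}$ shows that the $i$-th row of $\bar B^r$ is the constant vector $(p-q)^r s_\ell^{r-1}$ on the coordinates of $V_\ell$ (where $v_i$ lies) and $0$ elsewhere. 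So for $v_i, v_j$ both in the largest cluster $V_1$ the rows of $\bar B^r$ coincide exactly, while for $v_i \in V_1$ and $v_j \in V_\ell$ with $\ell \neq 1$, the row distance is at least $(p-q)^r \smax^{\,r-1/2}$. This is the ``signal'' that Algorithm~\ref{alg:powermethod} is trying to detect.

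The second step is the central technical claim: with probability $1 - \OO(1/n)$, for every vertex $v_i$,
\[
\|(B^r - \bar B^r)_i\|_2 \;\le\; (\log n)^{\OO(1)}\cdot \bigl(n\,p(1-q)\bigr)^{r/2}.
\]
Assuming this, the theorem follows by choosing $r = \Theta(\log n)$ and $\Delta$ slightly above $2(\log n)^{\OO(1)}(np(1-q))^{r/2}$, then applying the triangle inequality twice: for $v_i,v_j \in V_1$ one gets $\|B^r_i - B^r_j\|_2 \le \Delta$, while for $v_i \in V_1$ and $v_j \notin V_1$ one gets $\|B^r_i - B^r_j\|_2 \ge (p-q)^r \smax^{\,r-1/2} - \Delta$. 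The hypothesis $\smax \ge C(\log n)^7 \sqrt{p(1-q)}\,\sqrt{n}/(p-q)$, together with $r \asymp \log n$, makes the ratio of signal to $\Delta$ comfortably larger than $1.1$, because the exponential factor $[\smax(p-q)/\sqrt{np(1-q)}]^r$ beats the polynomial $(\log n)^{\OO(1)} \sqrt{\smax}$ correction provided the constant in the hypothesis is large enough.

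The hard part is the row-wise error bound. Expanding
\[
B^r - \bar B^r \;=\; \sum_{\varnothing \neq S \subseteq [r]} \prod_{t=1}^r M_t^{(S)}, \qquad M_t^{(S)} = R \text{ if } t \in S,\ \bar B \text{ otherwise},
\]
produces $2^r - 1$ terms, and estimating each by $\|\bar B\|^{r-|S|}\|R\|^{|S|}$ only yields operator-norm control (which loses a $\sqrt{n}$ factor relative to what the row actually needs) and completely ignores that $\bar B$ is rank $k$ and that the entries of $R$ are independent. Both structural features must be exploited simultaneously. This is where the paper's random partition / eigenvector-trace method enters: writing $(B^r)_{ij}$ as a sum over length-$r$ walks from $i$ to $j$, one introduces a random partition of the $r$ steps into short blocks so that within each block the random contribution can be controlled via a matrix-Bernstein style argument, while across blocks the blocks are, conditionally on the partition, approximately independent. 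Morally this is an ``eigenvector version'' of Wigner's trace method, which bounds $\operatorname{tr}(B^{2r})$ by counting closed walks; here we track open $(i,j)$-walks, which is what allows us to control individual rows rather than only the spectrum. Once the row-concentration bound is in hand, the rest is the triangle-inequality bookkeeping above.
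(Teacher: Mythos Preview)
Your central technical claim is false as stated, and this is not a minor calibration issue: the row error $\|(B^r-\bar B^r)_i\|_2$ is \emph{not} of order $(np(1-q))^{r/2}$ up to polylogs. Take the single term $L^{r-1}R$ in your expansion (here $L=\bar B$). For $v_i$ in the largest cluster, the $(i,j)$-entry equals $(p-q)^{r-1}\smax^{\,r-2}\sum_{\ell\in V_1}R_{\ell,j}$, so the typical row norm is of order $\sigma\sqrt{n\,\smax}\,(p-q)^{r-1}\smax^{\,r-2}$ with $\sigma^2=p(1-q)$. Under the hypothesis $(p-q)\smax\gg \sigma\sqrt{n}\,(\log n)^7$ and with $r\asymp\log n$, this single term already exceeds $(\sigma\sqrt n)^r$ by a super-polynomial factor. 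So you cannot place the threshold $\Delta$ at the pure-noise scale $(np(1-q))^{r/2}$; the error lives at a mixed scale that carries almost all of the signal magnitude $((p-q)\smax)^{r}$.

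The paper fixes this by setting $\Delta=\tfrac12\sqrt{\smax}\,(p-q)^r\smax^{\,r-1}$ (the signal scale) and proving that each error piece is at most $0.1\Delta$, i.e.\ small \emph{relative} to the signal, not small in absolute noise units. To make that tractable it does \emph{not} expand into the $2^r-1$ subsets $S$ as you suggest (for $r=\log n$ that is $n-1$ terms), but uses a two-level telescoping: first by the leftmost occurrence of $R$, giving $B^r=L^r+M+RB^{r-1}$ with $M=\sum_{t\ge1}L^tRB^{r-t-1}$, and then $RB^{r-1}=M'+R^r$ with $M'=\sum_{t\ge1}R^tLB^{r-t-1}$, so only $O(r)$ terms appear. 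Each $L^tRB^{r-t-1}$ is handled by the easy entrywise bound on $L^tR$ (a linear functional of independent $R_{ij}$'s) times $\|B\|^{r-t-1}$; the genuinely hard part is the entrywise bound on $R^tL$, which is a degree-$t$ polynomial in the $R_{ij}$'s. Finally, the random partition trick is not a partition of the $r$ steps into blocks as you describe: it is a random partition of the vertex set $[n]$ into $t$ parts $T_1,\dots,T_t$, used so that in the restricted sum one can peel off one index at a time via iterated Chernoff bounds on conditionally independent linear forms. Your high-level plan (compare $B^r$ to $L^r$ row by row, triangle inequality) is correct; the scale of the error, the choice of $\Delta$, the decomposition, and the nature of the partition all need to be corrected to match what actually works.
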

\paragraph{Comparison to best known result.}
In the line of spectral algorithms, the state of the art for large $k$ is provided by Vu \cite{vu2018simple} which proved that if each cluster has size larger than  $C \cdot (\log n) \cdot \sqrt{p(1-q)} \cdot \sqrt{n}/(p-q)$, then a spectral algorithm can recover all clusters. 
In fact, most community recovery algorithms require ``all'' clusters to be large. 
This constraint was termed as the ``small cluster barrier'' by Ailon et al.~\cite{ailon2013breaking}.
Recently the work by Mukherjee et al.~\cite{mukherjee2022recovering} provided an algorithm for this problem building on Vu's algorithm that does not need any size constraint on the clusters that are not the largest. They showed that if $(p-q)\smax \ge C_0\sqrt{p(1-q)} \sqrt{n} \log n$, then their algorithm is able to recover the largest cluster.
Our algorithm is also able to bypass the small cluster barrier and recover the largest cluster in the presence of small clusters. Our algorithm has an additional $(\log n)^6$ factor than \cite{mukherjee2022recovering} but is far simpler in comparison.

\subsubsection{Towards Vanilla Spectral Algorithms}
\label{sec: contribution-2}
Now we discuss the study of simple (vanilla) spectral algorithms, further describing the motivation, progress made so far and the open problems, before recording our contribution.

There are two primary motivations. 
The first, as stated in \cite{vu2018simple} is to design theoretically provable natural algorithms. Many spectral algorithms used in practice, such as PCA do not have any pre-processing or post-processing steps akin to that of
~\cite{JingconsistentSBM,vu2018simple,Yun2014AccurateCD}.
The second is to simply understand the power of just SVD projection in community recovery, and understanding the extent of its success w.r.t information theoretic bounds, which was the motivation in \cite{abbe2020entrywise}. 

Beyond these, since the present analysis is limited by certain hurdles in random matrix theory (as we discuss in later sections), we hope that studying this problem also furthers our understanding of random matrices, which is the technical motivation of this paper.

Against this background, Abbe et al. in their beautiful work~\cite{abbe2020entrywise} designed a vanilla spectral algorithm for $k=2$ clusters that recovers up to the information-theoretic bound. The algorithm is very simple. Obtain the second eigenvector of $A$, and if an entry is positive, then the corresponding vertex belongs to one cluster, otherwise to a different cluster. However, they could not extend analysis for $k \ge 3$.

\paragraph{Spectral algorithms for large $k$:}
In this paper we concentrate on the case of $k=\omega( \log n)$. In this domain the seminal work by McSherry~\cite{mcsherry2001spectral} provided a spectral algorithm to solve the hidden partition problem. However, this algorithm required a combinatorial and iterative partition of the vertices based on SVD projection of $A$. McSherry asked whether this step can be simplified, and this was answered by Vu~\cite{vu2018simple}.
To our best knowledge, this is the simplest spectral algorithm for $k=\omega( \log n)$. We describe this in Algorithm~\ref{alg:vualgorithm}, described as the SVD-II algorithm by Vu \footnote{This is not exactly the same as Vu's algorithm but captures the main idea.}.

\begin{algorithm}[ht]
\caption{\textsc{Vu's Spectral Algorithm to Detect Hidden Communities (SVD II in \cite{vu2018simple})}}
\label{alg:vualgorithm}
\begin{algorithmic}[1]
\STATE \textbf{Input:} A graph $G=(V,E)$ and parameters $p,q,k>0$.
\STATE Let $A$ be the adjacent matrix of $G$
\STATE Randomly partition $V$ into two parts $V_1$ and $V_2$
\STATE Let $A_1$ be the adjacency matrix of the subgraph induced by $V_1$
\FOR{ $\bu\in V_2 $}
\STATE Let $P^k_{A_1}\bu$ be the projection of $\bu$ into the space spanned by the first $k$ eigenvectors of $A_1$
\ENDFOR
\STATE Choose a parameter $\Delta>0$ 
\FOR{ $\bu,\bv\in V_2$}
\IF{$\|P^k_{A_1}\bu - P^k_{A_1}\bv\|_{2}\leq \Delta$}   
\STATE Put $\bu$ and $\bv$ in a same cluster
\ENDIF
\ENDFOR
\end{algorithmic}
\end{algorithm}

In brief, the adjacency matrix $A$ is partitioned randomly in two halves $A_1$ and $A_2$, and then columns of $A_2$ are projected onto the first $k$ eigenvectors  of $A_1$, and then the post projected columns of $A_2$ can be separated by clustering via distance based on some threshold. 
The key reason behind this partition is that the eigenvectors of $A_1$ and the columns of $A_2$ are uncorrelated, which aids the analysis. Once $A_2$ is clustered, recovering the communities from $A_1$ is relatively straightforward. Although this algorithm is simpler in comparison to \cite{mcsherry2001spectral}, there is a step that seems redundant. Specifically, the \emph{partition before projection} step. This was also noted by Vu~\cite{vu2018simple}, and he proposed the following algorithm, that he named SVD-I.

\paragraph{SVD-I and the barrier in analysis}

First we describe the conjectured algorithm by Vu~\cite{vu2018simple} in Algorithm~\ref{alg:vuconjecture}.

\begin{algorithm}
\caption{\textsc{SVD I in \cite{vu2018simple}}}
\label{alg:vuconjecture}
\begin{algorithmic}[1]
\STATE \textbf{Input:} A graph $G=(V,E)$ and parameters $p,q,k>0$.
\STATE Let $A$ be the adjacency matrix of $G$
\STATE Let $P^k_{A}$ be the projection operator into the space spanned by the first $k$ eigenvectors of $A$
\STATE Choose a parameter $\Delta>0$ 
\FOR{ $\bu,\bv\in V$}
\IF{$\|P^k_{A}\bu - P^k_{A}\bv\|_{2}\leq \Delta$} 
\STATE Put $\bu$ and $\bv$ in a same cluster
\ENDIF
\ENDFOR
\end{algorithmic}
\end{algorithm}

Note that compared to SVD-II, SVD-I does not have any partition. $A$ is simply projected onto the space of its first $k$ eigenvectors. The main barrier in analysis is as follows. In SVD-I, the eigenvectors and columns of $A$ are correlated. This was also pointed out by Vu.
\begin{quote}
\emph{Compared to SVD I, the extra steps in SVD II are the random partitions, done in order to reduce correlation.}
\end{quote}

In this regard, SVD-I is a more natural and intuitive algorithm, akin to widely popular algorithms used in practice such as principle component analysis (PCA).
The problem of whether SVD-I can be proven to be correct was left as an open problem in \cite{vu2018simple}. In this note Vu remarked
\begin{quote}
\emph{While SVD I could well win the contest for being the simplest algorithm, and perhaps the first one that most practitioners of the spectral method would think of, it is hard to analyse in the general case. In what follows, we analyse a slightly more technical alternative, SVD II}
\end{quote}

This presence of correlation is the technical barrier that we address in this paper.
We show that a slightly different algorithm from SVD-I works in the balanced case using our power iteration based analysis.

\paragraph{The $\USSBM$ model and our centered-SVD algorithm}

The $\USSBM$ setup is the balanced case of the symmetric SBM case. That is, if there are $k$ hidden partitions, then each vertex is assigned a partition uniformly at random. This implies the size of the partitions are fairly balanced. We note that the $\USSBM$ could be the first step towards the general $\SBM$ problem. 

In this paper, our main focus is on addressing the correlation barrier in \cite{vu2018simple}. In this model we show Algorithm~\ref{alg:oursvd} recovers the hidden communities under conditions comparable to that of the more complex SVD-II algorithm by Vu~\cite{vu2018simple}. 

\begin{algorithm}
\caption{\textsc{Centered-SVD}}
\label{alg:oursvd}
\begin{algorithmic}[1]
\STATE \textbf{Input:} A graph $G=(V,E)$ and parameters $p,q,k>0$.
\STATE Let $A$ be the adjacent matrix of $G$
\STATE $B\gets A - q\cdot \mathbf{1}^{n\times n}$
\COMMENT{(This is the extra step compared to SVD-I)}
\STATE Let $P^k_{B}$ be the projection operator into the space spanned by the first $k$ eigenvectors of $B$
\STATE Choose a parameter $\Delta>0$ depending on $n,k,p,q$
\FOR{ $\bu,\bv\in V$}
\IF{$\|P^k_{B}\bu - P^k_{B}\bv\|_{2}\leq \Delta$}   
\STATE Put $\bu$ and $\bv$ in a same cluster
\ENDIF
\ENDFOR
\end{algorithmic}
\end{algorithm}

As we aim for a vanilla algorithm, the idea is very simple. Given the adjacency matrix $A$, shift each entry by $q$ to obtain $B$. Then project the columns of $B$ onto the span of the first $k$ eigenvectors of $B$, and cluster based on a threshold on distance between the projected rows. We have the following theorem for its correctness. 

\begin{theorem}[Centered SVD algorithm]
\label{thm: centered-svd}
There are constants $C_0, C_1>0$ such that the following holds. 
Let $ p,q \le 0.75$ be parameters 
such that $\max \{p(1-p),q(1-q) \} \ge C_0(\log n)/n$. Let $G$ be a random graph sampled from $\USSBM(n,k,p,q)$. 
If $n \ge C_1 \cdot (\log n)^{14} \cdot p(1-q) \cdot k^2/(p-q)^2$
then the centered SVD (Algorithm~\ref{alg:oursvd}) recovers all the clusters with probability $1-\OO(1/n)$.
\end{theorem}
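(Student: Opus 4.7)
The plan is to reduce the claim about $\|P^k_B \bu - P^k_B \bv\|_2$ to a row-distance claim about the centered power matrix $B^r$, for which the analysis underlying Theorem~\ref{thm: main} already applies. First, in the $\USSBM$ setting a Chernoff bound gives that every cluster has size $(1\pm o(1))n/k$ with probability $1-\OO(1/n)$. The hypothesis $n \ge C_1(\log n)^{14}p(1-q)k^2/(p-q)^2$ is exactly the instantiation of the largest-cluster hypothesis of Theorem~\ref{thm: main} with $\smax \approx n/k$, so every cluster is simultaneously ``large enough'' for the row-distance argument to distinguish it from all others.

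Second, I would carry out a spectral-gap argument on $B = A - q\mathbf{1}^{n\times n}$. The expectation $M = \E[B]$ is a rank-$k$ block matrix whose non-zero eigenvalues are $(p-q)n_i$, all equal to $(1\pm o(1))(p-q)n/k$ in the balanced case. Standard concentration for centered Bernoulli matrices gives $\|B-M\| = O(\sqrt{np(1-q)})$, and Weyl then yields top eigenvalues $\lambda_1,\ldots,\lambda_k$ of $B$ with $\lambda_i = (1\pm o(1))(p-q)n/k$ while $|\lambda_{k+1}| = O(\sqrt{np(1-q)})$. The hypothesis forces the gap ratio $|\lambda_k|/|\lambda_{k+1}| \ge (\log n)^7$, so choosing any $r = \Theta(\log n/\log\log n)$ makes the tail contribution $\sum_{i>k}\lambda_i^r v_iv_i^\top$ polynomially small in operator norm.

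Third, writing $B^r = \sum_{i=1}^n \lambda_i^r v_iv_i^\top$, for any pair of vertices $a,b$ we have
\[
\|B^r(e_a - e_b)\|_2^2 \;=\; \sum_{i=1}^n \lambda_i^{2r}\bigl((v_i)_a - (v_i)_b\bigr)^2 \;=\; \bar\lambda^{2r}\,\|P^k_B(e_a - e_b)\|_2^2\,(1\pm o(1)) \;+\; \text{negligible},
\]
where $\bar\lambda \approx (p-q)n/k$ is the common magnitude of the top-$k$ eigenvalues. Thus, up to a universal multiplicative scaling by $\bar\lambda^r$, the row distances $\|B^r_a - B^r_b\|_2$ and the post-projection distances $\|P^k_B e_a - P^k_B e_b\|_2$ agree; multiplying by an extra $\lambda_i$ if one instead projects the column $B\bu$ rather than $e_\bu$ only rescales the overall magnitude and is absorbed in the balanced case. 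Consequently the row-distance separation guaranteed by the proof of Theorem~\ref{thm: main}, applied to each of the $k$ clusters (each of which meets the ``largest cluster'' condition by Step~1), transfers to the post-projection distances with a rescaled threshold $\Delta = \Delta_{\text{pow}}/\bar\lambda^r$, and the algorithm recovers all $k$ clusters simultaneously.

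The hard step is the row-wise control of $B^r$ imported from Theorem~\ref{thm: main}: this is precisely the correlation barrier Vu flagged, since the top-$k$ eigenvectors of $B$ are functions of the same random entries one then wishes to project. The paper's random-partition path-expansion technique, the ``eigenvector version of Wigner's trace method'', addresses it by writing $(B^r)_{a,b} = \sum B_{a,i_1}B_{i_1,i_2}\cdots B_{i_{r-1},b}$ and randomly partitioning the internal indices into depth classes so that independence is recovered across the partition. Granting that analysis, the remaining work for Theorem~\ref{thm: centered-svd} is the spectral-gap identification of $B^r$-action with $P^k_B$-action of Steps~2 and~3, together with the observation, specific to the balanced case, that the top $k$ eigenvalues of $B$ lie within $(1+o(1))$ of one another so that they factor out as a common $\bar\lambda^r$; it is this last point that restricts the theorem to $\USSBM$ rather than the general $\SSBM$.
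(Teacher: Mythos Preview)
Your proposal is correct and follows essentially the same approach as the paper: balance of clusters via Chernoff (the paper's Fact~4.1), transfer of the Theorem~\ref{thm: main} row-distance analysis to all clusters simultaneously (the paper's Lemma~4.2), and identification of $P^k_B$-distances with rescaled $B^r$-row distances via the eigenvalue expansion and Weyl (the paper's Lemma~4.4). The only cosmetic difference is that the paper compares $P^k_B\bu$ for $\bu=B e_i$ directly with $f_r B^{r+1}_i$ (so the extra factor of $B$ is built in from the start rather than added as an afterthought), and it fixes $r=\log n$ rather than $\Theta(\log n/\log\log n)$; neither affects the argument.
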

To the best of our knowledge, this is the first spectral algorithm without pre-SVD partition or post-SVD trimming steps that recover algorithm in the balanced case for $p,q=o(1)$ and $k=\omega (\log n)$. In fact, our parameters are comparable to that of SVD-II, which is a more complex (non-vanilla) algorithm. 
We finish this discussion with some remarks about our algorithm.

\begin{remark}[Notes on centering]
We further emphasize that even though we slightly deviate from SVD-I, our additional step of shifting is not synthetic. 
The step of shifting each entry of $A$ by $q$ is essentially a centering step.
Centering is a common step done in clustering algorithms, even beyond the $\SBM$ setup. 

For example, the algorithm PCA can be thought of as SVD projection of the centered matrix. 
Centering is done to remove the effect of the top eigenvector of the matrix, which often aligns close to the data mean and does not reveal knowledge about cluster identities. 
Our analysis of the centered-SVD algorithm thus provides insight towards the success of simple algorithms used in practice. 

Furthermore, $A$ and $B$ are only separated by a constant ($q$). Thus we believe an analysis of the eigenspace of $A$ is also possible in a similar manner. This remains an interesting problem.
\end{remark}

Now we describe the outline of our proofs.

\subsection{Proof Outlines}
\subsubsection{ Proof Outline of  Theorem~\ref{thm: main}  }
\label{sec: sketch-power}
We first revisit some notations. Recall that $A\sim\SSBM(n,k,p,q)$ is the adjacency matrix of the random graph, and $B:=A-q\cdot 1^{n \times n}$ is the matrix obtained by shifting each entry of $A$ by $q$. As described in Algorithm~\ref{alg:powermethod}, we aim to recover the largest cluster based on the $\ell_2$ distance of rows of the power matrix $B^r$ where $r=\log n$.

Specifically,  our goal is the following.  Let $v_i$ and $v_j$ be two vertices corresponding to the $i$-th and $j$-th row of $A$ (and thus, $B$).
We want to show that if $v_i$ and $v_j$ both belong to a large enough cluster then the distance of $i$-th and $j$-th row of $B^r$ is small.  On the other hand,  if $v_i$ belongs to a large enough cluster and $v_j$ belongs to a different cluster,  the distance between the rows of $B^r$ is much larger.  This allows us to recover a large cluster through distance based separation.

With this aim,  we decompose $B=L+R$ where $L:=\mathbb{E}[B]$ is the ``structure part'' of this matrix containing the hidden community information,
and $R$ is the ``noise part''. We notice that $R$ is a symmetric matrix where each entry is a $0$ mean independent random variable.

In this regard, we can write $B^r$ as $(L+R)^r$ and our aim is to show that
when considering the distance between two rows of $B^r$,  the distance due to $L^r$ is the dominating component. Here $L$ is a well structured matrix. By a simple calculation, for any two vertices $v_i$ and $v_j$,
\begin{itemize}
    \item If $v_i$ and $v_j$ belong to the same cluster, then the distance between $i$-th and $j$-th row of $L^r$ is $0$.

    \item Otherwise if $v_i\in V_{\ell}$ and $v_j\in V_{\ell'}$ 
    are from two different clusters, the distance between their rows
    is more than $\max \left \{ \sqrt{|V_{\ell}|} (p-q)^r |V_{\ell}|^{r-1}, \sqrt{|V_{\ell'}|} (p-q)^r |V_{\ell'}|^{r-1} \right \}$.
\end{itemize}

We then show that the $\ell_2$ norm of rows of 
$B^r-L^r$ is small (compared to the gap provided by $L^{r}$) by using ideas from random matrix theory and concentration bounds on low degree polynomials. 
We use a two step decomposition to achieve this.
In the first step we decompose
\[ B^r=(L+R)^r= L^r + \underbrace{L^{r-1}R+L^{r-2}RB+ \cdots LRB^{r-2} }_{M}+ RB^{r-1} \]
This is a decomposition based on the first location of $R$ in the product terms. 

\paragraph{Analysis of $M$.}We first show that the $\ell_2$ norm of rows of $M$ is small with high probability.  To achieve this, we start by showing that the absolute value of entries of $L^tR$ is sufficiently small (with high probability), which is fairly straightforward as the entries of $L^tR$ is just a linear combination of independent random variables $R_{i,j}$. This gives us an upper bound on the norm of rows of $L^tR$.  
We shall show that once this is upper bounded,  bounding the norm of rows of $L^tRB^{r-t-1}$ follows from some useful tools of random matrix theory~\cite{furedi1981eigenvalues,vu-norm}.

\paragraph{Analysis of $RB^{r-1}$.}Then, we are left with bounding the term $RB^{r-1}$.
This time we decompose based on the first location of $L$, obtaining
\[
RB^{r-1}= \underbrace{RLB^{r-2}+R^2LB^{r-3}+ \cdots R^{r-1}L}_{M'}+R^r
\]
We can view $R^r$ as a completely noisy component. One can also get a high level intuition behind the success of power method from the analysis of $R^r$.
According to condition of Theorem~\ref{thm: main}, we have that spectral norm of $L$ is much larger than that of $R$. However,
the $\ell_2$ norm of rows of $L$ corresponding to the largest cluster can still be smaller than the $\ell_2$ norm of rows of $R$.
But when we raise the matrix to its $r$-th power
for a large enough $r$, we will observe that rows of $L^r$ have much higher norm than that of $R^r$.
This shows that powering reduces the effect of noise on the $\ell_2$ norm of the rows.

\paragraph{Analysis of $M'$.}Finally, we upper bound norm of rows of $M'$,  for which we bound the absolute value of the entries of $R^tL$. This is our main technical contribution. We observe that each entry of this matrix is a low degree ($t \le \log n$) polynomial of the random variables $R_{i,j}$ where there is correlation between the different monomials. We use a random partition based analysis to bound the entries, and this is arguably the most non-trivial step in our analysis.

\paragraph{Analyzing entries of $R^tL$ via random partition.}We can write down the 
$(a,b)$-th entry of $R^tL$ (where $1 \le a,b \le n$)
as
\[ 
(R^tL)_{a,b}= \sum\limits_{(\ell_1, \ldots ,\ell_t) \in [n]^t} R_{a,\ell_1}R_{\ell_1,\ell_2}\cdots R_{\ell_{t-1},\ell_t}L_{\ell_t,b}
\]

Then $(R^tL)_{a,b}$ is a sum of multivariate monomials, where each monomial is uniquely defined by an index list $\Ell=(\ell_1, \ldots ,\ell_t)$. Against this setup, we divide the lists $\Ell$ into several groups through an encoding (that we describe in Section~\ref{sec:proof}) into a $t$-length array $X$ based on duplicacy in the entries of the index lists. All the index lists that are mapped into the same encoding $X$ are assigned into a group $\zeta_X$. Let the set of all $X$ be $\mX$.
Then we have 
\[
(R^tL)_{a,b}=
\sum_{X \in \mX}
\underbrace{ \left(
\sum_{(\ell_1, \ldots ,\ell_t) \in \zeta_X} R_{a,\ell_1}R_{\ell_1,\ell_2}\cdots R_{\ell_{t-1},\ell_t}L_{\ell_t,b}
\right)}_{\Z(X)}
\]
We show that $|\mX|\le t^t$ and then upper bound 
$\abs{\Z(X)}$ for any arbitrary $X$, and complete the proof through an union bound.

For the purpose of explanation, here we focus on the group $\zeta_{X'}$ such that any index list $(\ell_1, \ldots ,\ell_t) \in \zeta_{X'}$ is made of $t$ many different values. As a consequence, $\Z(X')$ is a multilinear polynomial. As we shall see, this fact simplifies our analysis. 
Note that $\Z(X')$ is a ``sum of correlated product terms'' (SOP), as the same random variable $R_{i,j}$ can occur in many monomials. 

Against this setup, we layout our random partition idea.
Let $T$ be a random partition of $\{1, \ldots ,n\}$ into $t$ many sets $T_1, \ldots ,T_t$
where each element in $[n]$ is assigned to one of the $t$ many sets uniformly at random.
Then we denote
\[
|S_T|:=\abs{\sum\limits_{\ell_1 \in T_1, \ldots ,\ell_t \in T_t} R_{a,\ell_1}R_{\ell_1,\ell_2}\cdots R_{\ell_{t-1},\ell_t}L_{\ell_t,b}}
= \Bigg| \sum\limits_{\ell_1 \in T_1 ,\ldots  ,\ell_{t-1} \in T_{t-1}}
R_{a,\ell_1}R_{\ell_1,\ell_2}\cdots R_{\ell_{t-2},\ell_{t-1}}
\underbrace{\left( \sum_{\ell_t \in T_t} R_{\ell_{t-1},\ell_t}L_{\ell_t,j}\right)}_{F(t-1,\ell_{t-1})}\Bigg|
\]
We first observe that for each 
$\ell_{t-1}$, $F(t-1,\ell_{t-1})$ is a summation of independent random variables with range in $[-1,1]$. 
Hence we can apply a Chernoff bound on random variables $R_{\ell_{t-1}, \ell_{t}}$ where $\ell_{t-1}\in T_{t-1}$ and $\ell_t\in T_t$ to upper bound $|F(t-1,\ell_{t-1})|$. Assume that we upper bound this quantity by $\beta$ with high probability.
We can carry out this process recursively to analyze $S_T$. Precisely, we have 
\[
|S_T|= \Bigg| \sum\limits_{\ell_1 \in T_1 ,\ldots  ,\ell_{t-2} \in T_{t-2}}
R_{a,\ell_1}R_{\ell_1,\ell_2}\cdots R_{\ell_{t-3},\ell_{t-2}}
\underbrace{\sum_{\ell_{t-1} \in T_{t-1}} R_{\ell_{t-2},\ell_{t-1}}F(t-1,\ell_{t-1}}_{F(t-2,\ell_{t-2})})
\Bigg|
\]
We want to upper bound $\abs{F(t-2,\ell_{t-2})}$ in a similar manner. Here observe that since $R_{\ell_{t-2},\ell_{t-1}}$ is a zero mean random variable, so is $R_{\ell_{t-2},\ell_{t-1}}F(t-1,\ell_{t-1})$, as $F(t-1,\ell_{t-1})$ is a constant for any $\ell_{t-1} \in T_{t-1}$. This is because we have already sampled (fixed) the random variables $R_{\ell_{t-1},\ell_t}$.

Furthermore, as $|R_{\ell_{t-2},\ell_{t-1}}\cdot F(t-1,\ell_{t-1})|\leq\beta$ with high probability, we can then apply Chernoff bound to upper bound $\abs{F(t-2, \ell_{t-2})}$. We keep repeating this step recursively and bound $|S_T|=\abs{F(0,\ell_0)}$ with high probability for any fixed $T$. The crux here is that $T_1,\dots,T_{t}$ are disjoint, so we can 
iteratively apply the Chernoff bound $t$ times.

However, for any fixed partition $T$, $S_T$ does not cover all monomials in $\Z(X')$. But as $T$ is designed uniformly at random, any monomial in $\Z(X')$ has an equal and a relatively large chance of being represented in $S_T$.
We use this fact to show $|\Z(X')| \le f(t) \cdot \mathbb{E}_T[|S_T|]$ for a reasonably small $f(t)$. 

We have so far bound $|S_T|$ with high probability only for any fixed $T$. What we want is to bound $\mathbb{E}[|S_T|]$. Here taking a union bound on all choices of $T$ is not a possibility due to the fact that there can be exponentially many such sets $T$.  Instead we use Markov's higher moment ideas to obtain bounds, which completes our proof.  

We remark that the intuition behind our encoding scheme is to generalize the conversion from SOP into POS beyond the simple case described above, which is aided by our scheme.  
Next, we consider the summation $\Z(X)$ for a generic $X \in \mX$. 
Here we encounter sums of monomials where individual variables may have degree more than $2$ in the monomials. We use similar arguments as the case above to bound the sum for all such groups. 
Then taking an union bound on all $\Z(X)$ upper bounds $|(R^tL)_{a,b}|$. Again, once the absolute value of the entries of $R^tL$ is bounded, an upper bound on the norm of rows of $R^tLB^{r-t-1}$ follows. 

The complete proof of the random partition analysis can be found in Section~\ref{sec:proof}. Next we describe the outline of the proof for our centered SVD algorithm, which builds up on the power iteration.  

\subsubsection{ Outline of Theorem~\ref{thm: centered-svd} }
We use our analysis in the spectral algorithm for the $\USSBM$ model. The key observation is as follows. Let $P^k_B$ be the SVD projection operator onto the first $k$ eigenvectors of $B$. Let $\bu$ and $\bv$ be the $i$-th and $j$-th column of $B$. Then we show that the distance between $P^k_B\bu$ and $P^k_B\bv$ is well approximated by the distance between the $i$-th and $j$-th row of $B^{r+1}$. Then the result follows from the power iteration method.
This analysis implies that in the centered case, the partition step for reducing correlation described in SVD-II is indeed a technical step, and is not a necessity (at least for the nearly balanced case).
To the best of our knowledge, this is the first analysis of a ``simple'' spectral algorithm that is applicable for $p,q=o(1)$ and $k=\omega(\log n)$. 
Next, we discuss some connections between our result and existing works.

\subsection{Connections to Other Techniques}
\label{sec: connections to others}

In this section, we discuss connections between our analysis and some standard techniques in random matrix theory,
and the challenges therein. 
We believe that our method can be helpful in resolving some barriers in previous analysis.

\paragraph{An analogue of Wigner's trace method.}In random matrix theory, it is very important to understand the distribution of eigenvalues (or singular values) of random matrices. However, it is always not easy to analyze eigenvalues directly. To solve this question, Wigner \cite{wigner1958distribution} proposed to use the following equality:
\[
\sum_{i}\lambda_{i}(A)^{k} = \mathbf{Trace}(A^k),
\]
where $\lambda_{i}$ is the $i$-th singular value of $A$. Then this question can be translated to the analysis of the diagonal entries (trace) of the power matrix $A^k$. Please see \cite{furedi1981eigenvalues, vu-norm} for more details. The entries of $A^k$ is a more intuitive notion than eigenvalues and we have combinatorial tools, such as the tree encoding scheme \cite{furedi1981eigenvalues, vu-norm}, to analyze it. However, in many applications (including but not limited to analysis of vanilla spectral algorithms),
one needs to analyze the eigenvectors of $A$, and/or the eigenspace formed by top eigenvectors of $A$. 
In this direction, our idea can be considered as a link between entries of power matrices and projection onto eigenspace for random matrices, 
as we analyze eigenspace projection (Theorem \ref{thm: centered-svd}) through entry-wise analysis (not only diagonal entries) of $A^k$.

Next, we discuss some of the standard tools used in the analysis of eigenvectors and eigenspaces, along with their limitations and potential ways forward.

\paragraph{Beyond Davis-Kahan.}The SBM problem can be thought as a ``structure+noise'' problem, where the mean ($\mathbb{E}[A]$) contains the hidden partition and $R:=A-\mathbb{E}[A]$ is the random perturbation (noise). 
One of the core ideas behind the spectral algorithms 
is to show that the eigenspaces due to the top eigenvectors 
of $A$ and $\mathbb{E}[A]$ are similar.
In this direction, the 
Davis-Kahan $\sin(\Theta)$ theorems~\cite{Davis-Kahan} are arguably the most standard way of bounding the distances between these eigenspaces. However, although Davis-Kahan is tight when the perturbation is chosen adversarially, it is sometimes sub-optimal when the perturbation is random, as is the case of the SBM problem. Then one needs to either use Davis-Kahan $\sin(\Theta)$ more subtly, or avoid it altogether. Please see \cite{Eldrige-Unperturbed} for more discussions about Davis-Kahan and SBM. 
In this direction, Eldridge et al.~\cite{Eldrige-Unperturbed} made an effort to analyze SBM beyond Davis-Kahan (a part of their proof also used power iteration). However, their analysis seems complicated and hard for large $k$ (the number of clusters). 
Furthermore, Abbe et al.~\cite{abbe2020entrywise} also indicated some vague connections between power iteration and Davis-Kahan. They comment,
\begin{quote}
\emph{Nevertheless, we believe the iterative perspective is helpful to many other (nonconvex)
problems where a counterpart of Davis-Kahan theorem is absent.}
\end{quote}

To summarize, power iteration based techniques seems to be promising in analysis of many applications beyond the limits of Davis-Kahan.

\paragraph{Beyond individual eigenvector analysis.}
Although the aforementioned works improved on Davis-Kahan bounds, 
a common theme in both of the papers
is analysis of the individual eigenvectors. 
Specifically, they analyze the distance between individual eigenvectors of the expectation and perturbed matrices.
However, if the expectation matrix $\mathbb{E}[A]$ has high duplicity for any one eigenvalue (which is the case with the balanced SBM model, $\lambda_2(\mathbb{E}[A]) = \cdots=\lambda_k(\mathbb{E}[A])$), either the methods do not apply directly, or there is an adversarial cost in the bounds depending on this duplicity.

In comparison, we do not analyze individual eigenvectors. Our analysis studies the whole eigenspace generated by top eigenvectors.
We note that McSherry \cite{mcsherry2001spectral} and then Vu~\cite{vu2018simple} (SVD-II) also analyzed whole
eigenspace. Hence, they do not have the multiple eigenvalue barrier. However, they needed to use a partition step in their algorithm to analyze it, to reduce correlation. In contrast, we analyze the entries of power matrices, which addresses the technical barriers mentioned above.

\subsection{Conclusion and Future Directions}

In this paper we present a simple algorithm based on the power iteration method to recover communities in the $\SSBM$ model. Our algorithm works in the presence of small clusters, 
with recovery conditions being logarithmically 
close to the state of the art~\cite{vu2018simple,mukherjee2022recovering}.

Next, we connect SVD projection to the power iteration method in the (almost) balanced case, and obtain  a vanilla spectral algorithm for $k=\omega( \log n)$ and $p,q=o(1)$, 
with recovery guarantee comparable to that of Vu~\cite{vu2018simple}.

The central idea behind analysis of entries of power matrices of $B$ (the shifted adjacency matrix) is the random partition technique. We analyze entries of powers of the matrices as low degree polynomials, that can be seen as a sum of correlated monomials. We convert these into product of sums of lower-degree uncorrelated monomials. This perspective allows us to improve on restrictions of related existing works, such as the pre-projection partition of Vu~\cite{vu2018simple}, and need for number of clusters to be constant (\cite{Eldrige-Unperturbed}) among others.
Furthermore, we believe these tools can be of independent interest in analysis of the ``structure+noise'' model.

\paragraph{Challenges.}
In the current form we can only establish the link between the power iteration method and SVD projection when the top $k$ eigenvectors of $A$ have similar eigenvalues. This is the reason we only focus on the uniform $\SSBM$ model. Extending this method for a more general case remains a very interesting research direction. 

Another question of interest is replacing the shifting procedure in the power iteration method. Analyzing whether a shifting procedure based on $A$
(instead of knowledge of $q$) works is an interesting vertical, making the algorithm more practical.

\section{Preliminaries}
\label{sec-2}

Here we define the notations and definitions we use throughout the paper. We start with spectral properties of the random matrices related to the graph generated through $\SSBM$.

\paragraph{Eigenvalues of the (centered) adjacency matrix}

Recall that we are given the random adjacency matrix $A$ that is sampled according to the underlying $\SSBM$ framework. 
In this paper we work with the centered adjacency matrix $B$, which is obtained by subtracting $q$ from every entry of $A$. The expectation matrix of $B$, denoted as $\meanB$ is thus a block diagonal matrix, with each entry being $p-q$ or $0$. Then we have that $\lambda_i(\meanB)=(p-q)|V_i|, 1 \le i \le k$. The primary difference in our algorithm from SVD I is that we consider the matrix $B$ instead of $A$. Now, we define some norm notations, followed by results on spectral norms of symmetric matrices.

\begin{definition}[Norm-related notations]
\label{def: M-norm}
For any matrix $M$, we denote by $M_i$ the $i$-th row of $M$.
In this direction, for a matrix $M$,
\begin{itemize}     
    \item 
    $\|M\|$ denotes the spectral norm of the matrix. 
    
    \item $\|M_i\|$ denotes the $\ell_2$-norm of the $i$-th row of $M$ 

    \item $\|M\|_{row}$ denotes the maximum of the $\ell_2$-norm of the rows of $M$.
\end{itemize}

\end{definition}

\begin{lemma}[Norm of a random matrix \cite{vu2018simple}]
\label{lemma: random-matrix-norm}
There is a constant $C_0>0$ such that the following holds. Let $E$ be a symmetric matrix whose upper diagonal entries $e_{ij}$ are independent random
variables where $e_{ij} = 1-p_{ij}$ or $-p_{ij}$ with probabilities $p_{ij}$ and $1-p_{ij}$ respectively, where $p_{ij}\in[0,1]$. Let $\sigma^2 := \max_{ij} \{p_{ij}(1-p_{ij})\}$. If $\sigma^2\geq C_0 \log n/n$, then
\[
\Pr[\|E\|\geq C_0\sigma n^{1/2} ] \leq n^{-3}
\]
\end{lemma}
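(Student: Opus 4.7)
The plan is to prove this via the moment method, which is the classical Wigner trace approach referenced in the paper's own discussion. For an even integer $k$, start from the inequality
\[
\|E\|^{k} \le \mathrm{Trace}(E^{k}) = \sum_{i_0,\ldots,i_{k-1}\in[n]} E_{i_0 i_1} E_{i_1 i_2}\cdots E_{i_{k-1} i_0},
\]
so that $\mathbb{E}[\|E\|^{k}] \le \mathbb{E}[\mathrm{Trace}(E^{k})]$. Expanding by linearity, each nonzero expectation corresponds to a closed walk of length $k$ on $K_n$ in which every edge is traversed at least twice (since each $E_{ij}$ is centered and the entries on distinct unordered pairs are independent). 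First I would separate the diagonal contribution $E_{ii}$ (either zero if the diagonal is set to $0$ by convention, or absorbed with a loss of at most a factor of $2$ since $|E_{ii}|\le 1$), and then deal with walks whose edge multiset consists of pairs where every edge appears exactly twice (the ``tree walks'') together with a correction for walks with higher multiplicities.

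Next I would carry out the combinatorial counting via the Füredi--Komlós tree encoding (the scheme the paper itself flags in Section~1.4). A closed walk of length $k=2\ell$ in which each edge appears at least twice uses at most $\ell+1$ distinct vertices; the dominant contribution comes from walks that use exactly $\ell+1$ vertices (so the edges form a tree, traversed exactly twice each). The number of such walks is bounded by $C_\ell \cdot n(n-1)\cdots(n-\ell)\le C_\ell n^{\ell+1}$, where $C_\ell$ is the Catalan number (bounded above by $4^\ell$). Each such term contributes at most $\prod_{\text{edges}}\mathbb{E}[E_{ij}^{2}] \le \sigma^{2\ell}$, since $\mathbb{E}[E_{ij}^{2}] = p_{ij}(1-p_{ij}) \le \sigma^{2}$. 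Walks using fewer than $\ell+1$ distinct vertices (edges with multiplicity $\ge 3$) can be bounded similarly, using $|E_{ij}|\le 1$ to absorb higher moments; standard bookkeeping shows these contribute a lower-order term provided $\sigma^{2} n \gg 1$, which is ensured by the hypothesis $\sigma^{2}\ge C_0 \log n/n$. Combining yields
\[
\mathbb{E}[\mathrm{Trace}(E^{2\ell})] \le (1+o(1))\cdot 4^{\ell}\, n^{\ell+1}\, \sigma^{2\ell}.
\]

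Finally I would close the argument with Markov's inequality on the $2\ell$-th moment: for any $t>0$,
\[
\Pr[\|E\|\ge t]\;\le\;\frac{\mathbb{E}[\|E\|^{2\ell}]}{t^{2\ell}}\;\le\;n\cdot\Big(\frac{4\sigma^{2} n}{t^{2}}\Big)^{\!\ell}.
\]
Choosing $t = C_{0}\sigma\sqrt{n}$ with $C_{0}$ a sufficiently large constant makes the ratio $4\sigma^{2}n/t^{2}$ at most $1/e$, and then taking $\ell = \lceil 4\log n\rceil$ produces a bound of $n\cdot e^{-4\log n}\le n^{-3}$, as required. The condition $\sigma^{2}\ge C_0 \log n/n$ is used precisely to guarantee that $\ell$ can be taken as large as $\Theta(\log n)$ while keeping the higher-multiplicity walks negligible.

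The main obstacle, and the only genuinely technical step, is the tree-encoding count of closed walks whose edge multiset has minimum multiplicity $\ge 2$, together with carefully controlling the correction from walks that use strictly fewer than $\ell+1$ vertices. This is a standard but delicate combinatorial argument; everything else (reduction to the trace, Markov on high moments, and the final optimization of $\ell$ and $t$) is routine once the walk count is in place.
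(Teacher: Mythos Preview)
The paper does not prove this lemma at all: it is quoted verbatim as a known result from \cite{vu2018simple} (and ultimately traces back to the F\"uredi--Koml\'os analysis \cite{furedi1981eigenvalues} and \cite{vu-norm}), and is used as a black box throughout Section~\ref{sec:prove-power}. So there is no in-paper proof to compare your proposal against.

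That said, your plan is exactly the classical route by which this lemma is established in those references: bound $\mathbb{E}[\mathrm{Trace}(E^{2\ell})]$ by counting closed walks in which every edge appears at least twice, use the F\"uredi--Koml\'os tree encoding to control the dominant (tree-walk) contribution by $O(4^{\ell}n^{\ell+1}\sigma^{2\ell})$, absorb higher-multiplicity walks using $|E_{ij}|\le 1$ together with the hypothesis $\sigma^{2}n\gtrsim \log n$, and finish with Markov at $\ell\asymp\log n$. The one place to be careful is the ``standard bookkeeping'' for walks with fewer than $\ell+1$ distinct vertices: when $\ell$ grows with $n$ (here $\ell\asymp\log n$), the number of walk shapes with a given vertex deficit grows super-exponentially in $\ell$, and one genuinely needs the quantitative encoding bounds from \cite{furedi1981eigenvalues,vu-norm} (and the condition $\sigma^{2}\ge C_{0}\log n/n$) to ensure these terms stay dominated by the tree-walk contribution. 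If you intend to write this out in full, that step is where the work lies; the rest of your outline is correct and matches the cited proofs.
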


\begin{lemma}[Weyl's inequality]
Let $A=\meanA+E$ be a matrix. Then $\lambda_{t+1}(A) \le \lambda_{t+1}(\meanA) + \| E \|$.
\end{lemma}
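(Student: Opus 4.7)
The plan is to invoke the Courant–Fischer min–max characterization of eigenvalues of symmetric matrices, which states that for a symmetric $n\times n$ matrix $M$,
\[
\lambda_{t+1}(M) \;=\; \min_{\substack{V\subseteq \mathbb{R}^n \\ \dim V = n-t}} \;\max_{\substack{x\in V \\ \|x\|=1}} x^{\top} M x,
\]
with eigenvalues indexed in nonincreasing order (matching the convention used earlier in this section, e.g.\ $\lambda_i(\meanB)=(p-q)|V_i|$). Note that both $A$ and $\meanA$ are symmetric (in the application $A$ is the adjacency matrix and $E$ is a symmetric noise matrix), so the characterization applies.

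First I would choose a specific subspace $V^\star$ that is optimal for $\meanA$. Concretely, let $u_1,\dots,u_n$ be an orthonormal eigenbasis of $\meanA$ with eigenvalues $\lambda_1(\meanA)\ge \lambda_2(\meanA)\ge \cdots$, and take $V^\star := \mathrm{span}(u_{t+1},\dots,u_n)$, which has dimension $n-t$. For any unit vector $x\in V^\star$, expanding in the eigenbasis gives $x^\top \meanA x \le \lambda_{t+1}(\meanA)$.

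Next I would use the trivial spectral-norm bound $x^\top E x \le \|E\|$, valid for every unit $x$ because $E$ is symmetric (and even without symmetry one has $|x^\top E x|\le\|E\|$ by Cauchy–Schwarz). Combining with $A=\meanA+E$,
\[
\max_{\substack{x\in V^\star \\ \|x\|=1}} x^\top A x \;=\; \max_{\substack{x\in V^\star \\ \|x\|=1}} \bigl(x^\top \meanA x + x^\top E x\bigr) \;\le\; \lambda_{t+1}(\meanA) + \|E\|.
\]
Finally, by the min–max formula applied to $A$, taking the minimum over $(n-t)$-dimensional subspaces can only decrease the value, so
\[
\lambda_{t+1}(A) \;\le\; \max_{\substack{x\in V^\star \\ \|x\|=1}} x^\top A x \;\le\; \lambda_{t+1}(\meanA) + \|E\|,
\]
which is the desired inequality.

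There is no real obstacle here—this is the classical Weyl inequality, and the only subtlety worth flagging is the eigenvalue indexing convention (largest-to-smallest) and the implicit assumption that $A$, $\meanA$, and $E$ are symmetric so that Courant–Fischer is available; both are consistent with the setup of Section~\ref{sec-2}. If one wanted the two-sided version $|\lambda_{t+1}(A)-\lambda_{t+1}(\meanA)|\le\|E\|$, the lower bound follows symmetrically by writing $\meanA=A+(-E)$ and applying the same argument, but for the statement as given the one-sided bound above suffices.
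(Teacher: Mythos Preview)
Your proof is correct and is the standard Courant--Fischer argument for Weyl's inequality. The paper states this lemma as a classical result without proof, so there is nothing to compare against; your write-up would serve perfectly well if a proof were required.
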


We have the following simple corollary.

\begin{corollary}
\label{corollary: spectrum}
Let $A=\meanA+E$ be the empirical adjacency matrix. Then with probability at least $1-n^{-3}, $we have that
\[
\lambda_{i}(\meanA)-C_0\sigma n^{1/2} \leq \lambda_{i}(A) \leq  \lambda_{i}(\meanA)+C_0\sigma n^{1/2} 
\]
for every $i\in [n]$.
\end{corollary}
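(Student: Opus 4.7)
The plan is to combine Lemma~\ref{lemma: random-matrix-norm} with Weyl's inequality in a single short argument. First I would verify that the noise matrix $E = A - \meanA$ satisfies the hypotheses of Lemma~\ref{lemma: random-matrix-norm}. Since $A$ is the adjacency matrix of a graph sampled from $\SSBM(n,k,p,q)$, each upper-diagonal entry $A_{ij}$ is an independent Bernoulli random variable with parameter $p_{ij} \in \{p,q\}$ (depending on whether $v_i$ and $v_j$ lie in the same hidden cluster), so the corresponding entry of $E$ takes value $1 - p_{ij}$ with probability $p_{ij}$ and $-p_{ij}$ with probability $1 - p_{ij}$. Consequently $\sigma^2 = \max_{ij} p_{ij}(1-p_{ij}) = \max\{p(1-p), q(1-q)\}$, which by the global assumption of the paper is at least $C_0(\log n)/n$, so the hypothesis $\sigma^2 \ge C_0 \log n/n$ is met.

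Next, I would apply Lemma~\ref{lemma: random-matrix-norm} to conclude that, with probability at least $1 - n^{-3}$, the event $\|E\| \le C_0 \sigma n^{1/2}$ holds. Conditioning on this event, I would invoke Weyl's inequality (used symmetrically). The statement in the excerpt gives $\lambda_i(A) \le \lambda_i(\meanA) + \|E\|$ by writing $A = \meanA + E$; swapping the roles and writing $\meanA = A + (-E)$ with $\|-E\| = \|E\|$ gives the matching lower bound $\lambda_i(A) \ge \lambda_i(\meanA) - \|E\|$. Combining both directions yields
\[
\bigl|\lambda_i(A) - \lambda_i(\meanA)\bigr| \;\le\; \|E\| \;\le\; C_0\,\sigma\, n^{1/2}
\]
for every $i \in [n]$, which is exactly the inequality of the corollary.

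There is no real obstacle here: the statement is a direct concatenation of the two tools just proved, and the only minor subtlety is that the excerpt's formulation of Weyl's inequality is one-sided, so I would mention explicitly that the two-sided bound follows by applying it twice (once to $A = \meanA + E$ and once to $\meanA = A - E$). The entire argument should fit in a few lines.
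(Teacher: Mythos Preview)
Your proposal is correct and matches the paper's intended argument exactly: the paper presents the corollary without proof, calling it a ``simple corollary'' immediately after stating Lemma~\ref{lemma: random-matrix-norm} and Weyl's inequality, so the combination you describe is precisely what is meant. Your remark about obtaining the two-sided bound by applying Weyl twice is the only detail worth spelling out, and you have done so.
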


Next we write down the Chernoff-Hoeffding bound for small probability events. 

\begin{theorem}[Chernoff Hoeffding bound~\cite{Chernoff}]
\label{thm: chernoff}
Let $X_1, \ldots , X_n$ be i.i.d random variables that can take values in $\{0,1\}$, with $\E[X_i]=p$ for $1 \le i \le n$. 
Then we have 
\begin{enumerate}
    \item $\Pr \left( \frac{1}{n}\sum_{i=1}^n X_i \ge p+ \eps \right) \le e^{-D(p+\eps||p)n} $    

    \item $\Pr \left( \frac{1}{n}\sum_{i=1}^n X_i \le p - \eps \right) \le e^{-D(p-\eps||p)n} $

\end{enumerate}

\end{theorem}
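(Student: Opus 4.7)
The plan is to invoke the standard Chernoff--Cram\'er exponential moment method, then explicitly carry out the minimization so that the rate function comes out as the KL divergence $D(\cdot \| \cdot)$.

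For the upper tail (1), I would fix a parameter $t>0$ and apply Markov's inequality after exponentiating:
$$\Pr\!\left[\tfrac{1}{n}\sum_{i=1}^n X_i \ge p+\eps\right] \;=\; \Pr\!\left[e^{t\sum_i X_i} \ge e^{tn(p+\eps)}\right] \;\le\; e^{-tn(p+\eps)}\,\mathbb{E}\!\left[e^{t\sum_i X_i}\right].$$
By independence of the $X_i$, the moment generating function factorizes as $\mathbb{E}[e^{t\sum_i X_i}] = (1-p+pe^t)^n$, and the bound becomes $\exp\!\bigl(n\log(1-p+pe^t) - tn(p+\eps)\bigr)$.

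Next I would minimize the exponent over $t \ge 0$ by one-variable calculus. Setting the derivative to zero gives $\frac{pe^t}{1-p+pe^t} = p+\eps$, which solves to $e^{t^*} = \frac{(p+\eps)(1-p)}{p(1-p-\eps)}$. Substituting $t^*$ back into the exponent and simplifying (writing $\log(1-p+pe^{t^*}) = \log\!\frac{1-p}{1-p-\eps}$ and using the solved value of $t^*$) yields exactly $-n\,D(p+\eps \,\|\, p)$, where $D(a\|b) = a\log(a/b) + (1-a)\log((1-a)/(1-b))$ is the binary KL divergence. For the lower tail (2), the argument is symmetric: apply the same method with $t<0$ (equivalently, apply part (1) to the variables $1 - X_i$, which are Bernoulli with mean $1-p$), and the minimization produces the exponent $-n\,D(p-\eps \,\|\, p)$.

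There is no real obstacle here; the only mildly tedious step is the algebraic simplification of the minimized exponent, which one verifies directly from the definition of $D(\cdot\|\cdot)$. Since this is a classical result the paper cites, the ``proposal'' is essentially to recall the standard derivation; the interesting use of the theorem lies not in its proof but in its iterated application inside the random-partition analysis of $R^t L$ sketched in Section~\ref{sec: sketch-power}.
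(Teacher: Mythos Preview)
Your proposal is correct and is exactly the classical Chernoff--Cram\'er derivation one finds in the cited reference; the paper itself does not prove Theorem~\ref{thm: chernoff} at all but merely states it as a preliminary with a citation to~\cite{Chernoff}. So there is nothing to compare against, and your final remark is apt: the content of the paper is in how this bound is applied, not in its proof.
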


We have now also presented the notions needed to define a suitable constant $C$ in Theorem~\ref{thm: main}.

\begin{definition}[The constant $C$]
\label{def: Constant-C}
We define $C:=C_0 \cdot 10^7$ where $C_0$ is the constant referred in Lemma~\ref{lemma: random-matrix-norm}. 
\end{definition}
Thus, under the condition of Theorem~\ref{thm: main}
we have $(p-q)\smax \ge 10^6 \cdot C_0 \cdot \sqrt{p(1-q} \cdot \sqrt{n} \cdot (\log n)^7$.

We conclude this section by defining some more notations and assumptions that we use throughout the paper.
\begin{enumerate}

    \item We denote the vertices of the graph with the corresponding row/column. For two integers $i$ and $j$, we write $i \sim j$ if the $i$-th and $j$-th vertices belong to the same cluster, and $i \nsim j$ otherwise.

    \item For the cluster $V_{\ell}$, we denote the size of the cluster by $s_{\ell}$. Furthermore, 
    by $\smax$ and $\smin$ we denote the size of the largest and the smallest cluster respectively.

\end{enumerate}

\section{Analysis of the Power-iteration Method}
\label{sec:prove-power}

In this section, we prove Theorem~\ref{thm: main}. Recall that $A\sim\SSBM(n,k,p,q)$ is the adjacency matrix and $B:=A- q\cdot 1^{n \times n}$ is the centered matrix. Let $V_1$ be the largest hidden cluster with $\smax=|V_1|$. 
Theorem \ref{thm: main} aims to show that there exists some threshold $\Delta$ such that with high probability for any $v_i$ and $v_j$,
\begin{enumerate}
    \item If $v_i,v_j\in  V_1$ then $\|B^r_i-B^r_j\| \le \Delta/2$.
    \item If $v_i\in V_1$ and $v_j\not\in V_1$, then $\|B^r_i-B^r_j\| \ge \Delta$.
\end{enumerate}
This separation shows that Algorithm \ref{alg:powermethod} is able to detect $V_1$ correctly, which finishes the proof Theorem \ref{thm: main}. 
We choose $\Delta:=0.5\sqrt{\smax}(p-q)^r(\smax)^{r-1}$ for Algorithm~\ref{alg:powermethod}.

Continuing from Section \ref{sec: sketch-power}, we decompose $B=L+R$ where $L=\mathbb{E}[B]$ is the structure part and $R$ is the noise part. Hence, $B^r$ can be decomposed as 
\begin{equation}
\label{eq: decompose}
B^r=L^r+M+M'+R^r
\end{equation}
where $M=L^{r-1} R +  L^{r-2} R B + \dots +  LR  B^{r-2}$ and $M'=RLB^{r-2}+ R^2LB^{r-3}+ \cdots + R^{r-1}L$. 

As we discussed in Section \ref{sec: sketch-power}, we would like to show that $L^r$ is the dominant part in $B^r$. That is, $\|B_{i}^{r}-B_{j}^{r}\|\approx \|L_{i}^{r}-L_{j}^{r}\|$ for every $i,j$. From Equation~\ref{eq: decompose} we have for any $i,j$
\begin{align*}
&\left| \|B^r_i-B^r_j\|-\|L^r_i-L^r_j\| \right| \\
\leq&\|(B^r_i - L^r_i) - (B^r_j - L^r_j)\| \\
\le &\|M_i-M_j\|+\|M_i'-M_j'\|+\|R^r_i-R^r_j\| \\
\leq & 2(\|M\|_{row}+\|M'\|_{row}+\|R^r\|_{row})
\end{align*}

The proof then progresses in the following manner.

\begin{itemize}
    \item Lemma~\ref{lemma: norm:L} first shows there is separation on $L$. That is, for every $v_i,v_j\in V_1$, $\|L_{i}^{r} - L_{j}^{r}\|=0$; for every $v_i\in V_1$ and $v_j\not\in V_1$, $\|L_{i}^{r} - L_{j}^{r}\|\geq 2\Delta$
    \item Corollary~\ref{cor: norm-M} shows that $\|M\|_{row} \le 0.1\Delta$ with probability $1-\OO(1/n)$.
    \item Lemma~\ref{lemma :norm: Rr} shows that 
    $\|R^r\|_{row} \le 0.1\Delta$ with probability $1-\OO(1/n)$.
    \item Corollary~\ref{cor: norm: M'} shows that $\|M'\|_{row} \le 0.1\Delta$ with probability $1-\OO(1/n)$.
\end{itemize}

Then we get that if $v_i \in V_1$ with $|V_1|=\smax$, for any $i \sim j$ we have
with probability $1-\OO(1/n)$, $\|B^r_i-B^r_j\| \le 0.6\Delta$. On the other hand, if $i \nsim j$ then $\|B^r_i-B^r_j\| \ge 2\Delta-0.6\Delta \ge 1.4\Delta$. This completes the proof.

Now we obtain these bounds. The conditions on $\smax$ and $p,q$ are the same as in Theorem~\ref{thm: main} in all of the following lemmas/corollaries, and thus we do not repeat them in each statement.

\subsection{Bounding the Maximum Norm of the Rows}

We start by recording some facts about $L$ and $R$.
\begin{fact}
\label{fact: L}
Let $L,R$ be the matrices defined above. We have the following properties for them.
\begin{enumerate}
    \item For every $r\geq 1$ and $i, j\in V_{\ell}$, $L^{r}_{i,j}=(p-q)^r\cdot s_{\ell}^{r-1}$
    \item For every $r\geq 1$ and $i\not\sim j$, and $L^{r}_{i,j}=0$

    \item $R_{i,j}$ are zero mean independent random variables (modulo the symmetry) 
    \item For every $i,j\in V_{\ell}$, $R_{ij} = 1-p$ or $-p$ with probabilities $p$ and $1-p$ respectively. For every $i\not\sim j$, $R_{ij} = 1-q$ or $-q$ with probabilities $q$ and $1-q$ respectively.

    \item The maximum variance of $R_{i,j}$, $\sigma^2$ is upper bounded by $p(1-q)$.
\end{enumerate} 
\end{fact}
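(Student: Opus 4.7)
The plan is to verify each of the five items by direct computation, since $L$ has a very transparent block structure and $R$ is defined as a centered version of the Bernoulli entries of $A$. I would begin by writing $B_{i,j} = A_{i,j}-q$ so that $L_{i,j} = \mathbb{E}[B_{i,j}]$ equals $p-q$ when $i\sim j$ and equals $0$ when $i\nsim j$. Thus, after a suitable permutation of rows and columns, $L$ is block diagonal with blocks of the form $(p-q)\cdot \mathbf{1}^{s_\ell\times s_\ell}$, one per cluster $V_\ell$.

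For items 1 and 2, I would argue by induction on $r$. The base case $r=1$ is immediate from the description of $L$ above. For the inductive step I would write
\[
L^{r+1}_{i,j} \;=\; \sum_{m=1}^{n} L^{r}_{i,m}\, L_{m,j},
\]
and observe that $L_{m,j}$ is nonzero only when $m\sim j$, while $L^r_{i,m}$ is nonzero (by the inductive hypothesis) only when $i\sim m$. Since $\sim$ is an equivalence relation, a nonzero term forces $i\sim j$, immediately yielding item 2. When $i\sim j$, the nonzero terms are exactly those $m\in V_\ell$ where $\ell$ is the cluster of $i$ and $j$; each contributes $(p-q)^{r}\cdot s_{\ell}^{r-1}\cdot (p-q)$, and there are $s_\ell$ such terms, giving $L^{r+1}_{i,j}=(p-q)^{r+1}s_\ell^{r}$, closing the induction.

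For items 3 and 4, since $R = B - L = A - \mathbb{E}[A]$ (the shift by $q$ cancels), the entry $R_{i,j}$ is just the centered Bernoulli variable $A_{i,j}-\mathbb{E}[A_{i,j}]$. Independence (modulo the symmetry constraint $R_{i,j}=R_{j,i}$) follows because the $\SSBM$ process draws the upper-triangular entries of $A$ independently. The two-point distribution claimed in item 4 is the definition of a centered Bernoulli with parameter $p$ (resp.\ $q$).

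For item 5, I would simply compute the two possible variances: $\mathrm{Var}(R_{i,j}) = p(1-p)$ when $i\sim j$ and $q(1-q)$ when $i\nsim j$. Under the standing assumption $p\ge q$ (implicit in the $\SSBM$ setup, where intra-cluster edges are at least as likely as inter-cluster edges), one has $p(1-p)\le p(1-q)$ and $q(1-q)\le p(1-q)$, so the maximum is bounded by $p(1-q)$ as claimed. There is no serious obstacle here; the entire fact is bookkeeping, and the only small subtlety is the appeal to $p\ge q$ in the final item, which is part of the SBM assumption throughout the paper.
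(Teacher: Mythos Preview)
Your proposal is correct; the paper states this as a ``Fact'' without proof, treating all five items as immediate from the definitions of $L$ and $R$, so there is no alternative argument to compare against. Your induction for items~1--2 and the direct variance computation for item~5 (using $p\geq q$) are exactly the natural verifications one would supply.
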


Now we analyze $\|L^{r}_i - L^r_j\|$, obtaining the following result.
\begin{lemma}
\label{lemma: norm:L}
Let $L$ be the matrix defined above.
Let $v_i \in V_1$. Then for every $i\sim j$, $\|L_{i}^{r} - L_{j}^{r}\| =0$.
Otherwise if $i \nsim j$ then 
$\|L_{i}^{r} - L_{j}^{r}\| \ge 2\Delta$.
\end{lemma}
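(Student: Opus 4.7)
The plan is a direct computation using Fact~\ref{fact: L}, since both bullets of the statement are read off from the closed-form expression for the entries of $L^{r}$.

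First I would handle the easy case $i \sim j$. By Fact~\ref{fact: L}(1)--(2), the $m$-th coordinate of $L^{r}_{i}$ depends only on the cluster containing $i$: it equals $(p-q)^{r}s_{\ell}^{r-1}$ if $m$ is in the same cluster $V_{\ell}$ as $i$, and $0$ otherwise. Since $i\sim j$, both $i$ and $j$ sit in the same $V_{\ell}$, so $L^{r}_{i}$ and $L^{r}_{j}$ are literally the same vector, giving $\|L^{r}_{i}-L^{r}_{j}\|=0$.

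For the second case, suppose $v_{i}\in V_{1}$ and $v_{j}\in V_{\ell}$ with $\ell\ne 1$. I would compute $(L^{r}_{i}-L^{r}_{j})_{m}$ by splitting into three types of columns: (i) $m\in V_{1}$ contributes $(p-q)^{r}s_{1}^{r-1}-0=(p-q)^{r}\smax^{r-1}$; (ii) $m\in V_{\ell}$ contributes $0-(p-q)^{r}s_{\ell}^{r-1}=-(p-q)^{r}s_{\ell}^{r-1}$; (iii) every other $m$ contributes $0$ since $L^{r}_{i,m}=L^{r}_{j,m}=0$. Summing the squares gives
\begin{equation*}
\|L^{r}_{i}-L^{r}_{j}\|^{2} \;=\; s_{1}\cdot (p-q)^{2r}\smax^{2(r-1)} \;+\; s_{\ell}\cdot (p-q)^{2r}s_{\ell}^{2(r-1)} \;\ge\; \smax^{2r-1}(p-q)^{2r},
\end{equation*}
where I dropped the (nonnegative) second term. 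Taking square roots yields $\|L^{r}_{i}-L^{r}_{j}\|\ge \sqrt{\smax}\,(p-q)^{r}\smax^{r-1}$, which is exactly $2\Delta$ by our choice $\Delta=0.5\sqrt{\smax}(p-q)^{r}\smax^{r-1}$.

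There is no real obstacle here; the lemma is a bookkeeping consequence of the block-diagonal structure of $L$ made explicit in Fact~\ref{fact: L}. The only mild point to be careful about is that the lower bound on the norm uses only the contribution of the $V_{1}$ columns, which is why the argument needs $v_{i}\in V_{1}$ (the largest cluster) to match the definition of $\Delta$; if one instead had $v_{i}\in V_{\ell}$ with $s_{\ell}<\smax$, the same calculation would only give $(p-q)^{r}\sqrt{s_{\ell}}\,s_{\ell}^{r-1}$, which is strictly less than $2\Delta$ and therefore insufficient for the separation needed by Algorithm~\ref{alg:powermethod}.
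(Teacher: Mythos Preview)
Your proof is correct and follows essentially the same route as the paper: both use Fact~\ref{fact: L} to see that rows within a cluster are identical, and for $i\nsim j$ both lower-bound $\|L^{r}_{i}-L^{r}_{j}\|$ by the contribution from the $\smax$ columns indexed by $V_{1}$. Your write-up is slightly more explicit (you compute the full squared norm before dropping the $V_{\ell}$ term), and your closing remark about why $v_{i}\in V_{1}$ is essential is a nice addition not spelled out in the paper.
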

\begin{proof}
If $i \sim j$, then according to Fact \ref{fact: L}, the $i$-th and $j$-th row are same in $L^r$, and thus $\|L^r_i-L^r_j\|=0$.

On the other hand, if $i \nsim j$, then the $i$-th and $j$-th row of $L^r$ differ in at least $\smax$ positions because $v_i\in V_1$. If we consider the $i$-th row, there are $\smax$ positions where $(L^r)_{i,\ell}=(p-q)^r(\smax)^{r-1}$ and $(L^r)_{j,\ell}=0$. This implies
$\|L^r_i-L^r_j\| \ge \sqrt{\smax} \cdot (p-q)^r(\smax)^{r-1} \ge 2\Delta$.
\end{proof}
We spend rest of the section upper bounding
the maximum $\ell_2$ norm of rows of $M, M'$ and $R^r$ with high probability. 
We start by upper bounding $\|B\|$. 
\begin{lemma}
\label{lemma: norm of B}
Let $L$ and $R$ be the matrices defined above. Then
\[
\Pr[\|B\|\geq (p-q)\smax + C_0\sigma n^{1/2} ] \leq n^{-3}
\]
\end{lemma}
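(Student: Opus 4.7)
The plan is to split $B = L + R$ and invoke the triangle inequality $\|B\| \le \|L\| + \|R\|$, then bound the two summands separately: a purely deterministic bound on $\|L\|$ and a high-probability bound on $\|R\|$ obtained from the already stated Lemma~\ref{lemma: random-matrix-norm}.

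For $\|L\|$, I would use the block structure recorded in Fact~\ref{fact: L}: $L$ is block diagonal with each cluster $V_\ell$ contributing an $s_\ell \times s_\ell$ block whose entries are all $p - q$. Each such block is a scalar multiple of the all-ones matrix, hence rank one with spectral norm exactly $(p-q)s_\ell$. Since the spectral norm of a block diagonal matrix equals the maximum of the block norms, this immediately yields $\|L\| = (p-q) \smax$.

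For $\|R\|$, I would check that $R = B - \mathbb{E}[B]$ fits the hypotheses of Lemma~\ref{lemma: random-matrix-norm}: by Fact~\ref{fact: L}, $R$ is symmetric, its strict upper triangle consists of independent entries, and each $R_{ij}$ takes the form $1 - p_{ij}$ or $-p_{ij}$ with probabilities $p_{ij}$ and $1 - p_{ij}$, where $p_{ij} \in \{p,q\}$. The maximum entrywise variance $\sigma^2 = \max\{p(1-p),\, q(1-q)\}$ is bounded above by $p(1-q)$ and, thanks to the standing assumption of Theorem~\ref{thm: main} that $\max\{p(1-p),q(1-q)\} \ge C_0(\log n)/n$, is at least $C_0 (\log n)/n$. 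So Lemma~\ref{lemma: random-matrix-norm} applies directly and gives $\Pr[\|R\| \ge C_0 \sigma n^{1/2}] \le n^{-3}$.

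Combining via the triangle inequality yields the claim. There is essentially no obstacle here: the statement is a clean packaging of Fact~\ref{fact: L} and Lemma~\ref{lemma: random-matrix-norm}. The only minor care needed is to verify the variance lower bound required to apply the random-matrix norm bound, which is immediate from the hypothesis on $\max\{p(1-p),q(1-q)\}$ inherited from Theorem~\ref{thm: main}.
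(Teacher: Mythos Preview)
Your proposal is correct and follows essentially the same approach as the paper: decompose $B = L + R$, use the triangle inequality, observe $\|L\| = (p-q)\smax$ from the block-diagonal structure, and apply Lemma~\ref{lemma: random-matrix-norm} to bound $\|R\|$. If anything, you are slightly more careful than the paper in explicitly verifying the variance hypothesis needed to invoke Lemma~\ref{lemma: random-matrix-norm}.
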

\begin{proof}
We recall that $B = L+R$. Hence, $\|B\|\leq \|L\|+\|R\|$. By Fact \ref{fact: L}, we also have that $\|L\| = (p-q)\smax$, which implies
\[
\Pr[\|B\|\geq (p-q)\smax + C_0\sigma n^{1/2} ] \leq \Pr[\|R\|\geq C_0\sigma n^{1/2} ] \leq n^{-3}
\]
where the second inequality follows from Lemma \ref{lemma: random-matrix-norm}.
\end{proof}
Now we observe a simple and useful property on the $\ell_2$ norm of product of matrices, that follows from the definition of spectral norm.

\begin{fact}
\label{fact: row-norm-product}
For any $n \times n$ matrices $B$ and $C$,
\[
\|B\cdot C\|_{row} \le \|B\|_{row} \cdot \|C\|
\]
\end{fact}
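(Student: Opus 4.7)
The plan is to unpack the definitions and reduce the claim to the standard operator-norm inequality for $\ell_2$. The $i$-th row of the product $B \cdot C$ is precisely the row vector $B_i \cdot C$, where $B_i$ is the $i$-th row of $B$. Transposing to get a column vector, we have $(BC)_i^{\!\top} = C^{\!\top} B_i^{\!\top}$, so $\|(BC)_i\| = \|C^{\!\top} B_i^{\!\top}\|_2$.

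Next I would apply the defining property of the spectral norm, namely that for any matrix $X$ and any vector $y$, $\|Xy\|_2 \le \|X\| \cdot \|y\|_2$, together with the identity $\|C^{\!\top}\| = \|C\|$. This yields
\[
\|(BC)_i\| \;=\; \|C^{\!\top} B_i^{\!\top}\|_2 \;\le\; \|C^{\!\top}\|\cdot\|B_i^{\!\top}\|_2 \;=\; \|C\|\cdot\|B_i\|.
\]

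Finally, taking the maximum over $i \in [n]$ on both sides and invoking Definition~\ref{def: M-norm},
\[
\|BC\|_{row} \;=\; \max_i \|(BC)_i\| \;\le\; \Bigl(\max_i \|B_i\|\Bigr)\cdot\|C\| \;=\; \|B\|_{row}\cdot\|C\|,
\]
which is exactly the claim. There is no real obstacle here; the only ingredient beyond bookkeeping is the standard fact that the spectral norm equals the $\ell_2 \to \ell_2$ operator norm, which is one of the equivalent definitions of $\|\cdot\|$ used throughout the paper.
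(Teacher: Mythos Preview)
Your proof is correct and matches the paper's approach: the paper simply states that the fact ``follows from the definition of spectral norm'' without giving any details, and your argument is exactly the natural unpacking of that remark via $\|(BC)_i\| = \|C^{\top}B_i^{\top}\|_2 \le \|C\|\cdot\|B_i\|$.
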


\paragraph{Bounding the rows of $M$.}

Recall that $M=L^{r-1}R + L^{r-2}RB+ \dots + LR\cdot B^{r-2}$. To bound each of the individual term, we first look at the matrix $L^{t}R$ for $1 \le t \le r-1$.

\begin{lemma}
With probability at least $1-n^{-3}$, the absolute value of all entries of $L^{t}R$ is upper bounded by $96\cdot (\sqrt{p(1-q)}\cdot \sqrt{\smax}\cdot \log n)\cdot (p-q)^{t}\cdot (\smax)^{t-1}$ for $t \le \log n$.
\end{lemma}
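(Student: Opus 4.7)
The key structural observation is that $L$ is constant on blocks: by Fact~\ref{fact: L}, for $a \in V_\ell$ we have $(L^t)_{a,j} = (p-q)^t s_\ell^{t-1}$ if $j \in V_\ell$ and $0$ otherwise. Consequently, for any $a \in V_\ell$ and any column index $b \in [n]$,
\[
(L^t R)_{a,b} \;=\; (p-q)^t\, s_\ell^{t-1} \sum_{j \in V_\ell} R_{j,b}.
\]
Thus the whole problem collapses, in one step, to controlling the scalar sum $Z_{a,b} := \sum_{j \in V_\ell} R_{j,b}$. The dependence on the power $t$ has been completely factored out into the deterministic prefactor $(p-q)^t s_\ell^{t-1}$; the randomness lives in a plain column sum of $R$.

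For fixed $b$, the family $\{R_{j,b}\}_{j \in V_\ell}$ consists of independent, zero-mean, $[-1,1]$-bounded random variables: although $R$ is symmetric, the entries $R_{j,b}$ for distinct $j$ correspond to distinct unordered pairs $\{j,b\}$ and so are independent, while the diagonal $R_{b,b}$ (relevant only when $b \in V_\ell$) is deterministically $0$. Each $R_{j,b}$ has variance at most $\sigma^2 := p(1-q)$ by Fact~\ref{fact: L}. The plan is then to apply Bernstein's inequality,
\[
\Pr\!\left[|Z_{a,b}| \ge \tau \right] \;\le\; 2\exp\!\left( - \frac{\tau^2/2}{s_\ell\, p(1-q) + \tau/3} \right),
\]
with $\tau := 96 \sqrt{p(1-q)\, \smax}\,\log n$ and $s_\ell \le \smax$. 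In the subgaussian regime the variance term dominates $\tau/3$, giving $\Pr[|Z_{a,b}|\ge \tau] \le 2\exp(-c \log^2 n)$ for a constant $c$ proportional to $96^2$, which is well below $n^{-5}$; and if $\tau/3$ happens to dominate (for small $s_\ell$), Bernstein instead yields $2\exp(-3\tau/4)$, which is also tiny since $\sqrt{p(1-q)\smax}\,\log n$ is at least polylogarithmic.

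The only subtlety is verifying this last claim, i.e.\ that Bernstein's exponent is always large. Combining the hypotheses $\smax (p-q) \ge C(\log n)^7 \sqrt{p(1-q)}\,\sqrt{n}$ and $p(1-q) \ge C_0 (\log n)/n$ from Theorem~\ref{thm: main}, together with the estimate $p(1-q) \ge (p-q)(1-q) \ge (p-q)/4$ for $q \le 0.75$, yields $\smax \cdot p(1-q) \gtrsim (\log n)^{7.5}$, which is comfortably above $\log n$. Once this regime check is in place, the rest is mechanical: a union bound over the $n^2$ pairs $(a,b)$ (and if needed the $O(\log n)$ relevant values of $t$) converts the per-entry failure probability into $n^{-3}$, and multiplying the bound on $|Z_{a,b}|$ by the deterministic prefactor $(p-q)^t s_\ell^{t-1} \le (p-q)^t \smax^{t-1}$ produces exactly the stated bound. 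Beyond the regime check I do not foresee any serious obstacle here---exploiting the block structure of $L$ reduces $(L^t R)_{a,b}$ to a linear combination of independent bounded variables, and Bernstein handles the rest.
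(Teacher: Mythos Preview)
Your proposal is correct and follows essentially the same approach as the paper: factor out the deterministic block structure of $L^t$ to reduce $(L^tR)_{a,b}$ to a scalar multiple of $\sum_{j\in V_\ell} R_{j,b}$, apply a concentration inequality to this sum of independent bounded mean-zero variables, and union bound over the $n^2$ entries. The only cosmetic difference is that you invoke Bernstein's inequality (and carefully check both the subgaussian and sub-exponential regimes) whereas the paper simply cites a Chernoff-type bound; this is not a substantive deviation.
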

\begin{proof}
For each pair $(i,j)$, the $(i,j)$-th entry of $L^{t}R$ is
\[
\sum_{\ell} L^{t}_{i,\ell}\cdot R_{\ell,j}
\]
Let the $i$-th vertex belong to $V_a$ for some $a$. Then by Fact \ref{fact: L}, we have that 
\[
\sum_{\ell} L^{t}_{i,\ell}\cdot R_{\ell,j}  = \sum_{\ell\in V_a} L^{t}_{i,\ell}\cdot R_{\ell,j} + \sum_{\ell\not\in V_a} L^{t}_{i,\ell}\cdot R_{\ell,j} =  (p-q)^{t}\cdot (|V_a|)^{t-1}\cdot \sum_{\ell \in V_{a}} R_{\ell,j}
\]
We notice that $\sum_{\ell \in V_{a}} R_{\ell,j}$ is a summation of at most $\smax$ many zero-mean independent random variables. Then by the Chernoff bound (Theorem \ref{thm: chernoff}), we have that
\[
\Pr\left[\left|\sum_{\ell \in V_{a}} R_{\ell,j}\right| \geq 48\sqrt{p\cdot \smax}\cdot \log n\right] \leq n^{-5}
\]
Then the absolute value of $(i,j)$-th entry of $L^tR$ is upper bounded by 
$(p-q)^{t}\cdot (\smax)^{t-1} \cdot 48\sqrt{p} \cdot \sqrt{ \smax}\cdot \log n$ with probability $1-n^{-5}$. Here we use that $1 \ge  \sqrt{1-q} \ge 1/2$ and then taking an union bound on all $n^2$ $(i,j)$-pairs completes the proof.
\end{proof}

We use this result to upper bound $\|M\|_{row}$.

\begin{corollary}
\label{cor: norm-M}
Let $r= \log n$ and $\Delta=0.5\sqrt{\smax}(p-q)^r(\smax)^{r-1}$.
Then the $\ell_2$ norm of each row of the matrix
$M=L^{r-1}R + L^{r-2}RB \dots + LR\cdot B^{r-2}$
is upper bounded by $0.1 \Delta$ with probability $1-\tilde{\OO}(n^{-3})$. 
\end{corollary}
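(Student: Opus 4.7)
The plan is to decompose $M$ into its $r-1$ summands $L^{r-1-t}RB^{t}$ for $t=0,1,\ldots,r-2$, and to control the row-norm of each summand independently via Fact~\ref{fact: row-norm-product}, i.e.\
\[
\|L^{r-1-t}RB^{t}\|_{row} \;\le\; \|L^{r-1-t}R\|_{row}\cdot \|B\|^{t}.
\]
For the first factor I would apply the preceding entry-wise bound on $L^{s}R$ (valid for every $s\le\log n$) together with the trivial conversion $\|(L^{s}R)_{i}\|\le\sqrt{n}\cdot\max_{j}|(L^{s}R)_{i,j}|$, obtaining $\|L^{r-1-t}R\|_{row}\le 96\sqrt{np(1-q)}\,\sqrt{\smax}\,\log n\cdot(p-q)^{r-1-t}(\smax)^{r-2-t}$. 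For the second factor I would invoke Lemma~\ref{lemma: norm of B}, giving $\|B\|\le(p-q)\smax+C_{0}\sigma\sqrt{n}$ with probability $1-n^{-3}$. A union bound over the $r-1$ entry events and the single spectral event keeps the total failure probability at $\tilde{\OO}(n^{-3})$.

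The pivotal observation is that the hypothesis of Theorem~\ref{thm: main} (restated in Definition~\ref{def: Constant-C}) gives $(p-q)\smax\ge 10^{6}C_{0}\sqrt{p(1-q)}\sqrt{n}(\log n)^{7}$, so $C_{0}\sigma\sqrt{n}$ is smaller than $(p-q)\smax$ by a factor of at least $(\log n)^{7}$. Hence $\|B\|\le(1+(\log n)^{-7})(p-q)\smax$, and raising to the $t$-th power for any $t\le r-1=\log n-1$ inflates the estimate by at most a constant (say $2$). Plugging in, each summand satisfies
\[
\|L^{r-1-t}RB^{t}\|_{row}\;\le\; 192\sqrt{np(1-q)}\,\sqrt{\smax}\,\log n\cdot (p-q)^{r-1}(\smax)^{r-2},
\]
a bound independent of $t$; summing over the $r-1$ summands contributes just an additional $\log n$ factor.

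It then remains to compare
\[
\|M\|_{row}\;\le\; \OO\!\bigl((\log n)^{2}\sqrt{np(1-q)}\,\sqrt{\smax}\bigr)\cdot (p-q)^{r-1}(\smax)^{r-2}
\]
with $0.1\Delta=0.05\sqrt{\smax}(p-q)^{r}(\smax)^{r-1}$, which reduces to verifying $(p-q)\smax\ge\Omega\bigl((\log n)^{2}\sqrt{np(1-q)}\bigr)$. This holds with enormous slack, since the hypothesis already supplies a $(\log n)^{7}$ factor on the right-hand side.

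The main obstacle I anticipate is taming $\|B\|^{t}$ when $t$ is as large as $\log n-1$: a naive bound such as $\|B\|\le 2(p-q)\smax$ would produce a $2^{\log n}=n$ blow-up and ruin the estimate. The remedy is precisely to exploit the $(\log n)^{7}$ margin in the assumption on $\smax$, which forces $\|R\|/\|L\|$ to be polylogarithmically small and therefore $(1+\|R\|/\|L\|)^{\log n}$ to remain a constant. With this point handled, the remainder is routine arithmetic.
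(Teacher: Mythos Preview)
Your proposal is correct and follows essentially the same route as the paper: decompose $M$ into its $r-1$ summands, bound each via $\|L^{s}R\|_{row}\cdot\|B\|^{r-1-s}$ using the preceding entry-wise lemma and Lemma~\ref{lemma: norm of B}, then exploit the $(\log n)^{7}$ margin in the hypothesis to keep $\bigl(1+\|R\|/\|L\|\bigr)^{\log n}$ bounded by a constant before summing and comparing with $0.1\Delta$. Your write-up is in fact a bit more explicit than the paper's about why the power $\|B\|^{t}$ does not blow up, which is exactly the right point to highlight.
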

\begin{proof}

We have with probability $1-\OO(n^{-3})$,
all entries of $L^tR$ are upper bounded by

\[
h_t:=96\cdot (\sqrt{p(1-q)}\cdot \sqrt{\smax}\cdot \log n)\cdot (p-q)^{t}\cdot (\smax)^{t-1}.
\]

Thus, the $\ell_2$-norm of any row of $L^tR$ is upper bounded by $h_t\sqrt{n}$. 
Then from Fact~\ref{fact: row-norm-product} 
we have $\|L^tRB^{r-t-1}\|_{row} \le  \|L^tR\|_{row} \cdot \|B\|^{r-t-1}$.
Substituting $\|B\|$ from Lemma~\ref{lemma: norm of B} we get 
\[
\|L^tRB^{r-t-1}\|_{row} 
\le  h_t\sqrt{n} \cdot \left( (p-q)\smax + C_0\sigma n^{1/2} \right)^{r-t-1}
\]
with probability $1-\OO(n^{-3})$. Then the fact that $(p-q)\smax \ge (\log n)^7 C_0 \sigma \sqrt{n}$ and $r-t-1 \le \log n$ gives us the bound
\[
\|L^tRB^{r-t-1}\|_{row}  \le (192 \sqrt{p(1-q)}\sqrt{n}\log n). 
\cdot \sqrt{\smax} \cdot 
(p-q)^{r-1} (\smax)^{r-2}
\]
Since $M=L^{r-1}R + L^{r-2}RB+ \dots + LR\cdot B^{r-2}$ we have
\[
\|M\|_{row} \le \|L^{r-1}R\|_{row} + \|L^{r-2}RB\|_{row} + \dots + \|LRB^{r-2}\|_{row}.
\]
Then by a union bound, we have with probability $1-\OO(r \cdot n^{-3})$
\[
\|M\|_{row}
\le r \cdot (192 \sqrt{p(1-q)}\sqrt{n}\log n)
\cdot \sqrt{\smax} \cdot 
(p-q)^{r-1} (\smax)^{r-2}
\]
Substituting $r \cdot (192 \sqrt{p(1-q)}\sqrt{n}\log n) < 0.1(p-q)\smax$
then completes the proof.

\end{proof}

\paragraph{Bounding the Rows of $R^r$.} Now we upper bound rows of $R^r$.

\begin{lemma}
\label{lemma :norm: Rr}
With probability $1-\OO(n^{-3})$, the $\ell_2$ norm of all rows of $R^{r}$ is upper bounded by 
$0.1 \Delta$ for $r=\log n$.
\end{lemma}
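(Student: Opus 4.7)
The plan is to reduce the maximum row norm of $R^r$ to the spectral norm of $R$, and then invoke Lemma~\ref{lemma: random-matrix-norm}. For any row index $i$, write $R^r_i = e_i^{\top} R^r$; then $\|R^r_i\| \le \|e_i\| \cdot \|R^r\| = \|R^r\|$, so $\|R^r\|_{row} \le \|R^r\|$. Since $R$ is symmetric, submultiplicativity of the spectral norm gives $\|R^r\| \le \|R\|^r$. So the entire lemma will reduce to bounding $\|R\|^r$.

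Next I would apply Lemma~\ref{lemma: random-matrix-norm} to $R$: its upper-diagonal entries are exactly the centered Bernoullis described in Fact~\ref{fact: L}, with maximum variance $\sigma^2 \le p(1-q)$, and the hypothesis $\sigma^2 \ge C_0(\log n)/n$ is met because $\max\{p(1-p), q(1-q)\} \ge C_0(\log n)/n$ by the assumption of Theorem~\ref{thm: main}. This yields $\|R\| \le C_0 \sqrt{p(1-q)}\sqrt{n}$ with probability at least $1 - n^{-3}$, and hence
\[
\|R^r\|_{row} \le \bigl(C_0 \sqrt{p(1-q)}\sqrt{n}\bigr)^{r}
\]
on the same event.

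It then remains to compare this bound with $0.1\Delta = 0.05 (p-q)^r (\smax)^{r-1/2}$. Pulling out a factor of $((p-q)\smax)^r$ from both sides, the desired inequality becomes
\[
\left(\frac{C_0 \sqrt{p(1-q)}\sqrt{n}}{(p-q)\,\smax}\right)^{r} \le \frac{0.05}{\sqrt{\smax}}.
\]
By Definition~\ref{def: Constant-C} together with the hypothesis of Theorem~\ref{thm: main}, the ratio inside the parenthesis is at most $\bigl(10^7(\log n)^7\bigr)^{-1}$. Raising this to the power $r = \log n$ produces a quantity that decays faster than any polynomial in $n$, which comfortably beats $\sqrt{\smax} \le \sqrt{n}$ on the right-hand side, giving $\|R^r\|_{row} \le 0.1\Delta$ for all sufficiently large $n$.

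I do not expect any real obstacle in this step---the whole argument is essentially a crude spectral bound combined with bookkeeping of constants. What actually makes the bound work is the formal incarnation of the heuristic sketched in Section~\ref{sec: sketch-power}: although $\|R\|$ can be comparable to (or even exceed) the row norm of $L$, raising the matrix to the $r = \log n$ power amplifies $\|L\|$ much faster than $\|R\|$ because of the large spectral gap $(p-q)\smax \gg \sqrt{p(1-q)\,n}$ postulated in Theorem~\ref{thm: main}, and it is this gap, taken to the $r$-th power, that creates the super-polynomial slack used above.
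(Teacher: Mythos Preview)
Your proposal is correct and follows essentially the same approach as the paper's own proof: bound $\|R^r\|_{row}$ by $\|R\|^r$, invoke Lemma~\ref{lemma: random-matrix-norm} to control $\|R\|$, and then use the spectral-gap hypothesis $(p-q)\smax \gg C_0\sqrt{p(1-q)}\sqrt{n}\,(\log n)^7$ to absorb the factor $\sqrt{\smax}$ in $\Delta$. The paper's version is terser and simply bounds $(C_0\sigma\sqrt{n})^r \le ((p-q)\smax/(\log n)^7)^r \le 0.1\Delta$, but the content is identical.
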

\begin{proof}

We know that $\|R^r\|_{row} \le \|R^r\|$.
Combined with Lemma \ref{lemma: random-matrix-norm} this implies that with probability $1-\OO(n^{-3})$, 
\[
\|R^r\|_{row} \le
\|R^r\| \le \|R\|^r \le (C_0 \sqrt{p(1-q)} \cdot \sqrt{n})^r.
\]
Now we use $(p-q)\smax \ge 10000C_0 (\log n)^7 \sigma \sqrt{n}$ to get the following inequality. 
\[
\|R^r\|_{row} \le (C_0 \sigma \sqrt{n})^r
\le \left( \frac{(p-q)\smax}{(\log n)^7} \right)^r \le 0.1\Delta
\]
\end{proof}

\paragraph{Bounding the rows of $M'$.}
Here we want to bound the norm of rows of $M'= RLB^{r-2} + \ldots +R^{r-1}L$. 
To bound the norm of rows of $R^tLB^{r-t-1}$, we start with bounding the norm of rows of $R^tL$. In this direction we obtain the following result.

\begin{lemma}
\label{lem:last-step}
The absolute value of each entry of $R^{t}L$ is upper bounded by 
\[
C_2 \sqrt{p(1-q)}(\log n)^6
\cdot  ((p-q) \sqrt{\smax} ) ((p-q)\smax)^{t-1}
\]
for $t \le \log n$ with probability $1-\OO(n^{-3})$ for some constant $C_2 < 0.05C$. 
\end{lemma}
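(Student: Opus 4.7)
The plan is to execute the random partition analysis sketched in Section~\ref{sec: sketch-power}, carried out uniformly over index patterns. Fix $a,b \in [n]$ and expand
\[
(R^{t}L)_{a,b} = \sum_{(\ell_1,\ldots,\ell_t)\in[n]^{t}} R_{a,\ell_1}R_{\ell_1,\ell_2}\cdots R_{\ell_{t-1},\ell_t}L_{\ell_t,b}.
\]
I would first introduce an encoding $X\in\mX$ recording the multiplicity pattern of $(\ell_1,\ldots,\ell_t)$ (which positions collide), bucket the index lists into groups $\zeta_X$, and check $|\mX|\le t^t$. Since $t\le \log n$, $t^t = 2^{O(\log n \cdot \log\log n)}$ is absorbed by one of the $\log n$ factors in the final bound. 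It therefore suffices to bound $|\Z(X)|$ per pattern and then union bound over $X$ and the $n^2$ pairs $(a,b)$.

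For the ``generic'' pattern $X'$ in which $\ell_1,\ldots,\ell_t$ are all distinct (so $\Z(X')$ is multilinear) I would draw a uniformly random ordered partition $T=(T_1,\ldots,T_t)$ of $[n]$ and set
\[
S_T \;=\; \sum_{\ell_1\in T_1,\ldots,\ell_t\in T_t} R_{a,\ell_1}R_{\ell_1,\ell_2}\cdots R_{\ell_{t-1},\ell_t}L_{\ell_t,b}.
\]
Work from the inside out: $F(t-1,\ell_{t-1}) := \sum_{\ell_t\in T_t} R_{\ell_{t-1},\ell_t}L_{\ell_t,b}$ is a sum of at most $\smax$ mean-zero variables bounded by $(p-q)$ (since $L_{\ell_t,b}\ne 0$ only when $\ell_t$ is in the cluster of $b$), so Theorem~\ref{thm: chernoff} yields $|F(t-1,\ell_{t-1})|\le O(\sqrt{p(1-q)\,\smax}\,(p-q)\log n)$ with probability $1-n^{-\Omega(1)}$. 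Because $T_{t-1}$ and $T_t$ are disjoint, once $F(t-1,\cdot)$ is fixed the variables $\{R_{\ell_{t-2},\ell_{t-1}}\}_{\ell_{t-1}\in T_{t-1}}$ remain independent and mean-zero, each contributing a bounded product $R_{\ell_{t-2},\ell_{t-1}}F(t-1,\ell_{t-1})$, so a further Chernoff bound applies. Iterating this telescoping $t$ times, each layer contributing a factor of roughly $(p-q)\sqrt{\smax}\cdot\log n$ (the Chernoff tail in that layer), gives a bound on $|S_T|$ of order $\sqrt{p(1-q)}\,(p-q)\sqrt{\smax}\,((p-q)\smax)^{t-1}\cdot(\log n)^{O(1)}$ with probability $1-n^{-\Omega(1)}$ for any fixed $T$.

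The main obstacle is passing from a per-$T$ bound to a deterministic bound on $|\Z(X')|$, since the number of partitions $T$ is far too large for a union bound. I would (i) show $|\Z(X')| \le f(t)\cdot \E_T[|S_T|]$ with a mild combinatorial factor $f(t)=t^{O(t)}$, by observing that every monomial indexed by $t$ distinct values lands in $S_T$ with probability exactly $t!/t^{t}$ under the uniform partition, so $\E_T[S_T] = (t!/t^t)\,\Z(X')$ for the ``aligned'' version (and a signed rearrangement for the general case), and (ii) control $\E_T[|S_T|]$ by bounding the $m$-th moment $\E_T[|S_T|^m]$ for $m=\Theta(\log n)$ using the high-probability per-$T$ bound plus Markov, promoting the with-high-probability estimate on $|S_T|$ into an $L^1$ estimate at the cost of only a $(\log n)^{O(1)}$ factor. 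The non-generic patterns, in which some $R_{i,j}$ appears with degree $\ge 2$, are treated by the same random-partition-plus-recursive-Chernoff scheme applied to the subset of free indices, treating repeated indices as bounded deterministic multipliers; each collision costs at most a factor $n$ but saves a factor $(p-q)\smax$, which is favorable under the hypothesis of Theorem~\ref{thm: main}. Combining everything and absorbing the $t^t$ pattern union bound, the $(\log n)^{O(1)}$ from moment method, and the $t$ Chernoff layers into a single $(\log n)^{6}$ factor (with a small constant so that $C_2<0.05C$) yields the claimed bound, and a final union bound over the $n^2$ entries completes the proof.
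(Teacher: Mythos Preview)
Your plan matches the paper's proof (Section~\ref{sec:proof}) in structure: pattern encoding with $|\mX|\le t^t$, random partition, recursive Chernoff on the partial sums $F(i,\cdot)$, and a moment argument to turn the per-$T$ tail bound into a bound on $\E_T[|S_T|]$. Two steps are mis-stated and would not close as written.

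First, the per-layer accounting. Only the innermost sum $F(t-1,\ell_{t-1})=\sum_{\ell_t\in T_t}R_{\ell_{t-1},\ell_t}L_{\ell_t,b}$ ranges over at most $\smax$ nonzero terms (through the support of $L_{\cdot,b}$), and this is the single layer yielding the factor $\sqrt{p}\,(p-q)\sqrt{\smax}\,(\log n)^{O(1)}$. Every subsequent layer $F(i-1,\ell_{i-1})=\sum_{\ell_i\in T_i}R_{\ell_{i-1},\ell_i}F(i,\ell_i)$ runs over all of $T_i$, so its Chernoff increment is $\sqrt{p}\,\sqrt{n}\,(\log n)^{O(1)}$, not $(p-q)\sqrt{\smax}$. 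The fixed-$T$ bound one actually obtains is $C_t=O\big(\sqrt{p}(p-q)\sqrt{\smax}(\log n)^3\big)\cdot O\big(\sqrt{p}\sqrt{n}(\log n)^3\big)^{t-1}$ (Lemma~\ref{lem: first fixed T}); the replacement of each $\sqrt{p}\sqrt{n}(\log n)^{O(1)}$ by $(p-q)\smax$ is the very last step and is precisely where the hypothesis of Theorem~\ref{thm: main} is used. (Also: the probability that a given distinct-index monomial lands in $S_T$ is $t^{-t}$, not $t!/t^{t}$, since $S_T$ demands $\ell_i\in T_i$ in order; this only shifts the $t^{O(t)}$ factor.)

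Second, for non-generic $X$ you cannot ``treat repeated indices as bounded deterministic multipliers'': bounding an $R$-factor by $1$ destroys the mean-zero property on which the recursive Chernoff rests, and the ``cost $n$, save $(p-q)\smax$'' heuristic does not track the actual degrees. The mechanism that works is to process the $t'$ free indices in the encoding order; Definition~\ref{def: encoding} guarantees (Lemma~\ref{lem: recurse-degree}) that at layer $i$ the factor $\mP{i,i}$ collecting all variables containing $\ell_i$ has degree $w_i\ge 1$. When $w_i\ge 2$ this factor is no longer mean-zero, so a separate concentration estimate is needed (Lemma~\ref{lem: chernoff-higher degree}), giving $\big|\sum_{\ell_i\in T_i}\mP{i,i}\cdot F\big|\le O\big(p(1-q)\,n\,(\log n)^3\big)\cdot\beta\le \big(O(\sqrt{p}\sqrt{n}(\log n)^3)\big)^{w_i}\cdot\beta$. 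Since $\sum_i w_i=t$, the product over layers reproduces exactly $C_t$; this degree bookkeeping is the missing ingredient in your proposal.
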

\noindent Due to the non trivial and technical nature of the proof, we place it in Section~\ref{sec:proof}. Finally, we use the aforementioned result to bound $\|M'\|_{row}$.

\begin{corollary}
\label{cor: norm: M'}
The maximum $\ell_2$-norm of the rows of $M'$ is upper bounded by $\|M'\|_{row} \le 0.1\Delta$ 
with probability $1-\tilde{\OO}(n^{-3})$.
\end{corollary}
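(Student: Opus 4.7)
The plan is to mirror the argument used for Corollary~\ref{cor: norm-M}, but replacing the entry-wise bound on $L^t R$ with the (much harder) entry-wise bound on $R^t L$ supplied by Lemma~\ref{lem:last-step}. Concretely, I will treat each summand $R^{t}LB^{r-t-1}$ of $M'$ separately, bound its row norm by $\|R^{t}L\|_{\mathrm{row}}\cdot \|B\|^{r-t-1}$ via Fact~\ref{fact: row-norm-product}, and then union bound over $t=1,\dots,r-1$.

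First, I would convert the entrywise bound of Lemma~\ref{lem:last-step} into a bound on $\|R^{t}L\|_{\mathrm{row}}$ by multiplying by $\sqrt{n}$: with probability $1-\OO(n^{-3})$,
\[
\|R^{t}L\|_{\mathrm{row}} \;\le\; C_2\sqrt{n}\,\sqrt{p(1-q)}\,(\log n)^6\cdot (p-q)\sqrt{\smax}\,\bigl((p-q)\smax\bigr)^{t-1}.
\]
Next, from Lemma~\ref{lemma: norm of B} together with the assumption $(p-q)\smax\ge (\log n)^7\, C_0\sigma\sqrt{n}$, one has $\|B\|\le \bigl(1+(\log n)^{-7}\bigr)(p-q)\smax$ with probability $1-\OO(n^{-3})$, and since $r-t-1\le r = \log n$, the compounding factor $\bigl(1+(\log n)^{-7}\bigr)^{r-t-1}$ is at most a constant (say $\le 2$). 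Therefore
\[
\|R^{t}LB^{r-t-1}\|_{\mathrm{row}} \;\le\; 2C_2\sqrt{n}\,\sqrt{p(1-q)}\,(\log n)^6\cdot \sqrt{\smax}\,(p-q)^{r-1}(\smax)^{r-2}.
\]

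Summing over the $r-1\le\log n$ terms of $M'$ and applying a union bound over the associated $\OO(\log n)$ failure events yields, with probability $1-\tilde{\OO}(n^{-3})$,
\[
\|M'\|_{\mathrm{row}} \;\le\; 2C_2\,(\log n)^7\,\sqrt{p(1-q)}\,\sqrt{n}\cdot \sqrt{\smax}\,(p-q)^{r-1}(\smax)^{r-2}.
\]
Recalling $\Delta=0.5\sqrt{\smax}(p-q)^r(\smax)^{r-1}$, the ratio is
\[
\frac{\|M'\|_{\mathrm{row}}}{\Delta} \;\le\; \frac{4C_2\,(\log n)^7\sqrt{p(1-q)}\sqrt{n}}{(p-q)\,\smax}.
\]
Finally, plugging in the hypothesis $(p-q)\smax \ge 10^6\,C_0\,\sqrt{p(1-q)}\sqrt{n}\,(\log n)^7$ from Definition~\ref{def: Constant-C}, together with $C_2 < 0.05\,C = 0.05\cdot C_0\cdot 10^7$, makes this ratio at most $0.1$, which gives $\|M'\|_{\mathrm{row}}\le 0.1\Delta$ as required.

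The only non-routine ingredient is Lemma~\ref{lem:last-step} itself, which is where the random-partition machinery is invoked; the present corollary is purely a book-keeping consequence. The main thing to be careful about is the compounding of $\|B\|^{r-t-1}$: one must use the $(\log n)^7$ cushion in the assumption on $(p-q)\smax$ to absorb the $(1+(\log n)^{-7})^{r-t-1}$ factor into a harmless constant rather than losing a polynomial factor. No other step requires more than elementary estimates plus Fact~\ref{fact: row-norm-product} and the spectral-norm bound on $B$.
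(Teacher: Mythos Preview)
Your proposal is correct and follows essentially the same route as the paper: convert the entry-wise bound of Lemma~\ref{lem:last-step} into a row-norm bound via the $\sqrt{n}$ factor, push through $\|B\|^{r-t-1}$ using Fact~\ref{fact: row-norm-product} and Lemma~\ref{lemma: norm of B}, sum over $t$, and compare with $\Delta$. Your explicit handling of the $(1+(\log n)^{-7})^{r-t-1}\le 2$ compounding is in fact slightly more careful than the paper's write-up; the only quibble is that the very last numerical step, using the loose bound $C_2<0.05C$ from Lemma~\ref{lem:last-step}, overshoots the target constant $0.1$ by a small absolute factor---but this is the same kind of constant slack present throughout the paper and is inessential to the argument.
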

\begin{proof}
We have $M'= RLB^{r-2} + \ldots +R^{r-1}L$.
This implies $\|M'\|_{row} \le \|RLB^{r-2}\|_{row} + \ldots +\|R^{r-1}L\|_{row}$. Lemma \ref{lem:last-step} gives upper bounds for each entries of $R^tL$ with probability $1-\OO(n^{-3})$. Then we have 
\[
\|R^tL\|_{row} \le  \sqrt{n}\cdot \left(C_2\sqrt{p(1-q)} (\log n)^6 \cdot (p-q)\sqrt{\smax} \cdot ((p-q)\smax)^{t-1}\right)
\]
with probability $1-\OO(n^{-3})$.
Furthermore $\|R^tLB^{r-t-1}\|_{row} \le \|R^tL\|_{row} \cdot (\|B\|)^{r-t-1}$.
This implies that with probability $1-\OO(n^{-3})$,
\[
\|R^tLB^{r-t-1}\|_{row} 
\le
2C_2 \sqrt{p(1-q)} (\log n)^6 \sqrt{n} \cdot 
\sqrt{\smax} \cdot (p-q)^{r-1} (\smax)^{r-2}
\]
for all $1 \le t \le r-1$. 
Summing over the $r-1$ terms we get with probability $1-\OO(\log n \cdot n^{-3})$,
\[
\|M'\|_{row} \le 2C_2 \sqrt{p(1-q)} (\log n)^7 \sqrt{n} \cdot 
\sqrt{\smax} \cdot (p-q)^{r-1} (\smax)^{r-2}
\]

Substituting  $2C_2 \sqrt{p(1-q)} (\log n)^7 \sqrt{n} \le 0.1 (p-q)\smax$
results in the bound
$ \|M'\|_{row} \le  0.1 \sqrt{\smax} (p-q)^r (\smax)^{r-1} \le 0.1\Delta $
with probability $1-\OO(\log n \cdot n^{-3})$. 

\end{proof}

This completes our proof of Theorem~\ref{thm: main}.

\section{An Analysis of Simple Spectral Algorithms}
\label{sec: spectral}

In this section we use the analysis of the power iteration method (Algorithm~\ref{alg:powermethod}) to prove the correctness of the centered-SVD algorithm (Algorithm~\ref{alg:oursvd}) and thus prove Theorem~\ref{thm: centered-svd}. 
Let us first briefly recall the algorithm. 
We are given the random adjacency matrix $A$ from $\USSBM(n,k,p,q)$, where the $k$ hidden communities are formed by putting each vertex into the communities uniformly at random (this implies that the communities are fairly balanced). Let $P^k_B$ the projection operator onto the eigenspace of the top $k$ eigenvectors of the shifted adjacency matrix $B$. 
Let $v_i$ and $v_j$ be the $i$-th and $j$-th vertices of $G$  
and correspondingly the $i$-th and $j$-th row (column) of $B$ be $\bu$ and $\bv$ respectively. 
We intend to show with high probability that there is a threshold $\Delta$ so that if $v_i$ and $v_j$ belong to the same community then $\|P^k_B\bu-P^k_B\bv\| \le \Delta$.
Otherwise if they belong to different communities then $\|P^k_B\bu-P^k_B\bv\| \ge 1.1\Delta$.
As we focus on the more specific $\USSBM$ case here, we show that this recovery works for all clusters (in comparison to Algorithm~\ref{alg:powermethod} where we consider recovering the largest cluster).

Furthermore we work under the condition that 
$n \ge C^2\cdot k^2 (\log n)^{14} \cdot p(1-q)/(p-q)^2$.
We now give a sketch of the proof. It can be broken down into two steps. 

\paragraph{Sketch of proof}

\begin{itemize}

\item 
First we show that in the $\USSBM$ case under the condition of Theorem~\ref{thm: centered-svd}, all communities are separable using the power analysis method. That is, for $r=\log n$ and a threshold $\Delta':=0.5 \sqrt{n/k} (p-q)^{r+1} (n/k)^{r}$
we show in Lemma~\ref{lem: balanced B} that with probability $1-\OO(1/n)$ if two vertices $v_i$ and $v_j$ belong to the same community then $\|B^{r+1}_i-B^{r+1}_j \| \le 0.7\Delta'$. Otherwise if they belong to different communities then $\|B^{r+1}_i-B^{r+1}_j \| \ge 1.2\Delta'$.

\item
Next we connect $P^k_B$ with $B^r$. We show that if $\bu$ is the $i$-th column of $B$, then $P^k_B\bu$ is well approximated by
$\frac{1}{(p-q)^r(n/k)^r} \cdot B^{r+1}_i$. In fact, we prove 
in Lemma~\ref{lem:power-SVD-distance} that
$\|P^k_B\bu - \frac{B^{r+1}_i}{(p-q)^r(n/k)^r} \| =o 
\left( \frac{\Delta'}{(p-q)^r(n/k)^r} \right)$ with probability $1-\OO(n^{-2})$.

\end{itemize}

From hereon we denote $s:=n/k$, $f_r:=\frac{1}{(p-q)^r(s)^r}$ and define the threshold of Algorithm~\ref{alg:oursvd} as $\Delta:=f_r\cdot \Delta'=0.5(p-q)\sqrt{s}$.
Then combining the results above we get the following.

For any two vertices $v_i$, $v_j$ and corresponding columns/rows $\bu$ and $\bv$,
\begin{enumerate}
    \item If $v_i,v_j \in V_{\ell}$ for some $\ell$ then by the triangle inequality,
    \[
    \|P^k_B\bu-P^k_B\bv \|\le     
    \|P^k_B\bu-f_rB^{r+1}_i \|+ \|f_rB^{r+1}_i-f_rB^{r+1}_j\|+\|P^k_B\bv-f_rB^{r+1}_j \|
    \]
    
    From Lemma~\ref{lem: balanced B} with  probability $1-\OO(1/n)$ we have 
    $\| f_rB^{r+1}_i-f_rB^{r+1}_j\| \le f_r\cdot 0.7 \Delta' \le 0.7\Delta$. 
    
    Next, both $\|P^k_B\bu-f_rB^{r+1}_i \|$ and $\|P^k_B\bv-f_rB^{r+1}_j \|$ are upper bounded by $o(\Delta)$ from Lemma~\ref{lem:power-SVD-distance}. Combining the two we get that $\| P^k_B\bu-P^k_B\bv\| \le 0.7\Delta+o(\Delta) \le 0.8\Delta$ with probability $1-\OO(1/n)$.
    
    \item Similarly if $v_i \in V_{\ell}$ and $v_j \in V_{\ell'}$ then 
    \[ \|P^k_B\bu-P^k_B\bv \| \ge \| f_rB^{r+1}_i-f_rB^{r+1}_j\|-\|P^k_B\bu-f_rB^{r+1}_i \| - \|P^k_B\bv-f_rB^{r+1}_j \|
    \]
    Here the first term is lower bounded by $f_r1.2\Delta'=1.2\Delta$ and the next two terms are upper bounded by $o(\Delta)$. This implies $\|P^k_B\bu-P^k_B\bv \| \ge 1.1\Delta$ with probability $1-\OO(1/n)$, which completes the proof. 
    
\end{enumerate}

Now, we obtain Lemma~\ref{lem: balanced B} and \ref{lem:power-SVD-distance}, to obtain separation based on $B^r$ and then upper bound $\|P^k_B\bu- f_rB^{r+1}_i\|$.

\subsection{Separation in the Rows of Power Matrix in the \textrm{USSBM} Case}
We start with formalizing the balancedness in the size of the communities. Since each vertex is assigned a community uniformly at random, the expected size of any cluster is $s:=n/k$.
Under the choices of parameters $p,q,k,n$ from Theorem~\ref{thm: centered-svd} we have $n/k \gg \sqrt{n}(\log n)^7$ and then applying Chernoff's bound on the size of the communities we get
\begin{fact}[Size of the clusters]
\label{fact: size}
In the $\USSBM(n,k,p,q)$ model under the condition of Theorem~\ref{thm: main}, for any cluster $V_i$ we have
$\Pr(|V_i-n/k| \ge \frac{1}{(\log n)^5} \cdot n/k) \le n^{-4}$.
\end{fact}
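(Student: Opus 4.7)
The plan is to recognize that this is a direct consequence of the multiplicative Chernoff bound applied to a Binomial random variable, and that the only real work is to verify that the parameter regime forces $n/k$ to be large enough.

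First, I would unpack the USSBM sampling: each of the $n$ vertices is independently assigned to one of the $k$ hidden communities uniformly at random. Consequently, for any fixed $i$, the size $|V_i|$ is distributed as $\mathrm{Binomial}(n, 1/k)$ with mean $\mu := n/k$. Setting $\varepsilon := 1/(\log n)^5$, the multiplicative Chernoff bound yields
\[
\Pr\!\left(\bigl||V_i|-\mu\bigr| \ge \varepsilon\mu\right) \le 2\exp\!\left(-\varepsilon^2\mu/3\right) = 2\exp\!\left(-\frac{n/k}{3(\log n)^{10}}\right).
\]

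Second, I would translate the hypothesis $n \ge C_1 \cdot (\log n)^{14} \cdot p(1-q) \cdot k^2/(p-q)^2$ from Theorem~\ref{thm: centered-svd} into a lower bound on $n/k$. Since $p-q \le p \le 1$ and $1-q \le 1$, we have $p(1-q)/(p-q)^2 \ge 1$, so the hypothesis gives $n \ge C_1 k^2 (\log n)^{14}$, i.e.\ $n/k \ge C_1 k (\log n)^{14}$, which in particular implies $n/k \gg \sqrt{n}(\log n)^7$ as claimed in the text preceding the statement. In any case it forces $n/k \ge 12(\log n)^{11}$ once $n$ is large enough, so $\varepsilon^2 \mu / 3 = (n/k)/(3(\log n)^{10}) \ge 4 \log n$.

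Combining these, the deviation probability is at most $2 n^{-4} = \OO(n^{-4})$, which is the claimed bound (absorbing the factor $2$ into the constant in $\OO$, or equivalently noting that $2n^{-4} \le n^{-4} \cdot n^{o(1)}$ which is fine at the level of the bound stated). The main obstacle, if one could call it that, is just verifying the arithmetic linking the hypothesis on $n$ to the required lower bound on $\mu/(\log n)^{10}$; there is no technical subtlety beyond a standard Chernoff computation.
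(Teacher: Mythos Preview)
Your proposal is correct and matches the paper's own argument, which is just the one-sentence remark preceding the Fact: ``$n/k \gg \sqrt{n}(\log n)^7$ and then applying Chernoff's bound on the size of the communities.'' You have merely made explicit the multiplicative Chernoff computation and the verification that $p(1-q)/(p-q)^2 \ge 1$ forces $n/k \ge C_1 k(\log n)^{14}$, which is exactly what the paper leaves implicit; note also that the Fact's reference to ``Theorem~\ref{thm: main}'' is a typo in the paper for ``Theorem~\ref{thm: centered-svd},'' and you correctly used the latter.
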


This gives us a relation between the largest and smallest cluster in the $\USSBM$ framework. Let $\smax$ and $\smin$ be the sizes of the largest and the smallest clusters respectively. We then have the following lemma. 

\begin{lemma}
\label{lem: balanced B}
Let $\Delta'=0.5\sqrt{s} (p-q)^{r+1} (s)^{r}$. 
In the $\USSBM$ case under the condition of Theorem~\ref{thm: centered-svd},
if $v_i$ and $v_j$ belong to the same cluster then $\|B^{r+1}_i-B^{r+1}_j\| \le 0.7\Delta'$
and $\|B^{r+1}_i-B^{r+1}_j\| \ge 1.2\Delta'$ otherwise.
\end{lemma}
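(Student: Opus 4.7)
The plan is to reuse the decomposition and row-norm bounds of Section~\ref{sec:prove-power} essentially verbatim, with two small adjustments: the exponent becomes $r+1$ rather than $r$, and because the $\USSBM$ clusters are all nearly balanced, the separation holds between every pair of distinct clusters rather than only pairs involving the largest one. First I apply Fact~\ref{fact: size} together with a union bound to conclude that with probability $1-\OO(1/n)$ every cluster size satisfies $s_\ell = s\bigl(1 \pm (\log n)^{-5}\bigr)$, so in particular $\smax = \smin(1+o(1)) = s(1 \pm o(1))$. Moreover, the hypothesis $n \ge C_1 (\log n)^{14} p(1-q) k^2/(p-q)^2$ of Theorem~\ref{thm: centered-svd} rearranges to $(p-q)\, s \ge C(\log n)^7 \sigma \sqrt{n}$ for $C_1 \ge C^2$, which is the exact analog of the size hypothesis of Theorem~\ref{thm: main} with $\smax$ replaced by $s$.

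Next I decompose $B^{r+1} = L^{r+1} + M + M' + R^{r+1}$ exactly as in Equation~\ref{eq: decompose} (now with one extra factor of $B$, so $M = L^r R + L^{r-1}RB + \cdots + LRB^{r-1}$ and $M' = RLB^{r-1} + \cdots + R^r L$). For the structure part, Fact~\ref{fact: L} yields $\|L^{r+1}_i - L^{r+1}_j\| = 0$ when $v_i \sim v_j$; and when $v_i \in V_\ell$, $v_j \in V_{\ell'}$ with $\ell \neq \ell'$, the two rows are supported on the disjoint index sets $V_\ell$ and $V_{\ell'}$ with constant entries $(p-q)^{r+1} s_\ell^r$ and $(p-q)^{r+1} s_{\ell'}^r$ respectively, so
\[
\|L^{r+1}_i - L^{r+1}_j\|^2 \;=\; (p-q)^{2(r+1)} \bigl(s_\ell^{2r+1} + s_{\ell'}^{2r+1}\bigr) \;\ge\; 2(1-o(1))\, s^{2r+1} (p-q)^{2(r+1)},
\]
giving $\|L^{r+1}_i - L^{r+1}_j\| \ge 2\sqrt{2}(1-o(1))\,\Delta' \ge 2\Delta'$. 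For the noise, I re-run Corollary~\ref{cor: norm-M}, Lemma~\ref{lemma :norm: Rr}, and Corollary~\ref{cor: norm: M'} with the substitution $\smax \leftarrow s(1+o(1))$ and $r \leftarrow r+1$ to obtain $\|M\|_{row}, \|R^{r+1}\|_{row}, \|M'\|_{row} \le 0.1\Delta'$ each with probability $1-\OO(1/n)$. These proofs transfer unchanged, since they rely only on the spectral-norm bound on $R$ (Lemma~\ref{lemma: random-matrix-norm}), Chernoff concentration, Lemma~\ref{lem:last-step} (which is applied at powers $t \le \log n = r$ and so remains in range), and the translated hypothesis $(p-q) s \ge C(\log n)^7 \sigma \sqrt{n}$ established above. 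The one extra summand contributes only another logarithmic factor, absorbed in the constants. Combining via the triangle inequality, as done at the start of Section~\ref{sec:prove-power}, yields $\|B^{r+1}_i - B^{r+1}_j\| \le 0 + 0.6\Delta' < 0.7\Delta'$ when $v_i \sim v_j$ and $\|B^{r+1}_i - B^{r+1}_j\| \ge 2\Delta' - 0.6\Delta' > 1.2\Delta'$ otherwise, which is the claimed separation.

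\paragraph{Main obstacle.} All of the heavy lifting --- the random-partition analysis of entries of $R^t L$, the $\|L^t R B^{r-t-1}\|_{row}$ control, and the entrywise Chernoff arguments --- has already been carried out for Theorem~\ref{thm: main}, so this lemma is genuinely a corollary obtained by instantiating that machinery in the balanced regime. The only bookkeeping that requires care is checking that the $(\log n)^{-5}$ deviation of cluster sizes given by Fact~\ref{fact: size}, compounded over an exponent of $r + 1/2 = \Theta(\log n)$, still produces a $1 - o(1)$ multiplicative factor everywhere; this is immediate from $\bigl(1 - (\log n)^{-5}\bigr)^{\log n} = 1 - o(1)$. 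I do not foresee any genuine technical obstruction beyond this translation step.
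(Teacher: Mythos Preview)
Your proposal is correct and follows essentially the same approach as the paper's proof: both reuse the decomposition $B^{r+1}=L^{r+1}+M+M'+R^{r+1}$, invoke Corollary~\ref{cor: norm-M}, Lemma~\ref{lemma :norm: Rr}, and Corollary~\ref{cor: norm: M'} at exponent $r+1$ with $\smax$ replaced by $s(1+o(1))$ via Fact~\ref{fact: size}, and then combine with the structure-part separation by the triangle inequality. Your treatment is in fact slightly more careful than the paper's --- you compute $\|L^{r+1}_i-L^{r+1}_j\|$ exactly using both clusters' contributions (yielding the $2\sqrt{2}(1-o(1))$ factor) and explicitly verify that the hypothesis of Theorem~\ref{thm: centered-svd} translates to $(p-q)s \ge C(\log n)^7\sigma\sqrt{n}$ --- whereas the paper simply quotes Lemma~\ref{lemma: norm:L} with $\smin$ in place of $\smax$.
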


\begin{proof}

Recall that 
\[
\left| \|B^{r+1}_i-B^{r+1}_j\|-\|L^{r+1}_i-L^{r+1}_j\| \right| 
\le 2( \|M\|_{row}+ \|M'\|_{row} +\|R^{r+1}\|_{row})
\]

Now combining Corollary~\ref{cor: norm-M}, Lemma~\ref{lemma :norm: Rr} and Corollary~\ref{cor: norm: M'} we have that with probability $1-\OO(1/n)$,
$2( \|M\|_{row}+ \|M'\|_{row} +\|R^{r+1}\|_{row}) \le 
0.1 \sqrt{\smax} (p-q)^{r+1} (\smax)^{r}$.
Fact~\ref{fact: size} implies that for any $r \le \log n$ with probability $1-\OO(n^{-4})$, 
\begin{itemize}
    \item $(\smax)^{r+1} \le s^{r+1} \cdot (1+\log^{-5}n)^{r+1} \le s^{r+1} \cdot (1+\log^{-4}n) \le 1.01s^{r+1}$.
    
    \item $(\smin)^{r+1} \ge s^{r+1} \cdot (1-\log^{-5} n)^{r+1} \ge s^{r+1} \cdot (1-\log^{-4} n) \ge  0.99s^{r+1}$.
\end{itemize}

Thus we get with probability $1-\OO(1/n)$,
\[
2( \|M\|_{row}+ \|M'\|_{row} +\|R^r\|_{row}) \le 0.33 \sqrt{s} \cdot (p-q)^{r+1} s^{r} \le 0.7 \Delta'
\]

Next, Lemma~\ref{lemma: norm:L} implies that if $v_i$ and $v_j$ do not belong to the same community then 
$\|L^{r+1}_i-L^{r+1}_j\| \ge \sqrt{\smin} (p-q)^{r+1} (\smin)^{r} \ge 0.99 \sqrt{s} (p-q)^{r+1} s^{r} \ge 1.9\Delta'$ with probability $1-\OO(1/n)$.

Combining this we get that with probability $1-\OO(1/n)$
$\|B^{r+1}_i-B^{r+1}_j\| \le 0.7\Delta'$ if $v_i$ and $v_j$ belong to the same cluster, and $\|B^{r+1}_i-B^{r+1}_j\| \ge 1.9\Delta'-0.7\Delta' \ge 1.2\Delta'$, which completes the proof. 
 
\end{proof}

Next, we connect the SVD projection with the power matrix.

\subsection{Relation Between Eigenspace and Power Matrix}

We aim to show that $\|P^k_B\bu-f_rB^{r+1}_i\|=o(\Delta)$ with high probability.
To obtain this we first show that
$\|f_rB^{r+1}_i\|=\Theta(\Delta)$ and then 
$\|P^k_B\bu-f_rB^{r+1}_i\| =o(\|f_rB^{r+1}_i\|)$.

\begin{lemma}
\label{lem: O-Delta}
Let $\Delta=0.5(p-q)\sqrt{s}$.
Then under the condition of Theorem~\ref{thm: centered-svd} we have 
$\|f_rB_i^{r+1}\|=\Theta(\Delta)$ with probability
$1-\OO(1/n)$ for any $i$.
\end{lemma}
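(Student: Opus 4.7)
The plan is to sandwich $\|B_i^{r+1}\|$ between constant multiples of $\|L_i^{r+1}\|$ using the same decomposition $B^{r+1} = L^{r+1} + M + M' + R^{r+1}$ employed throughout Section~\ref{sec:prove-power}, and then to evaluate $\|L_i^{r+1}\|$ explicitly in the \USSBM{} regime.

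First I would compute $\|L_i^{r+1}\|$ in closed form. If $v_i \in V_\ell$, then by Fact~\ref{fact: L} the $i$-th row of $L^{r+1}$ takes the value $(p-q)^{r+1} s_\ell^{\,r}$ at each of the $s_\ell$ coordinates indexed by $V_\ell$ and vanishes elsewhere, so $\|L_i^{r+1}\| = \sqrt{s_\ell}\,(p-q)^{r+1} s_\ell^{\,r}$. Fact~\ref{fact: size} forces $s_\ell = (1 \pm o(1))\,s$ with probability $1-\OO(n^{-3})$, hence
\[
f_r\,\|L_i^{r+1}\| \;=\; (1 \pm o(1))\,(p-q)\sqrt{s} \;=\; (2 \pm o(1))\,\Delta.
\]

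Next I would reuse the three row-norm bounds from Section~\ref{sec:prove-power} (Corollary~\ref{cor: norm-M}, Lemma~\ref{lemma :norm: Rr}, and Corollary~\ref{cor: norm: M'}) at exponent $r+1$ in place of $r$; this is legitimate because $r+1 = O(\log n)$ and the proofs in Section~\ref{sec:prove-power} go through unchanged. These yield
\[
\|M\|_{row} + \|M'\|_{row} + \|R^{r+1}\|_{row} \;\le\; c\,\sqrt{s}\,(p-q)^{r+1} s^{\,r}
\]
with probability $1-\OO(1/n)$ for a small absolute constant $c$, exactly the estimate already extracted at the beginning of the proof of Lemma~\ref{lem: balanced B}. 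After multiplication by $f_r$ this is at most $c\,(p-q)\sqrt{s} = 2c\,\Delta$.

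Finally, the triangle inequality applied to the row identity $B_i^{r+1} = L_i^{r+1} + M_i + M'_i + R_i^{r+1}$ gives $\bigl|\|B_i^{r+1}\| - \|L_i^{r+1}\|\bigr| \le \|M\|_{row} + \|M'\|_{row} + \|R^{r+1}\|_{row}$, and combining the two displayed estimates yields $f_r\|B_i^{r+1}\| \in [\,(2-o(1)-2c)\Delta,\ (2+o(1)+2c)\Delta\,]$ with probability $1-\OO(1/n)$, which is the claimed $\Theta(\Delta)$ bound. I do not expect any real obstacle: all of the hard work is already done in Section~\ref{sec:prove-power}, and the \USSBM{} balance (Fact~\ref{fact: size}) is exactly what forces $\|L_i^{r+1}\|$ to be essentially independent of which cluster contains $v_i$. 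The only mild subtlety is verifying that the three row-norm bounds remain valid at exponent $r+1$ rather than $r$, but this is immediate from inspection of their proofs.
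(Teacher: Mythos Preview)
Your proposal is correct and follows essentially the same route as the paper: decompose $B^{r+1}=L^{r+1}+M+M'+R^{r+1}$, evaluate $\|L_i^{r+1}\|$ explicitly using Fact~\ref{fact: L} and the balancedness from Fact~\ref{fact: size}, reuse the row-norm bounds of Section~\ref{sec:prove-power} at exponent $r+1$ to control the remaining terms, and conclude via the triangle inequality. The paper carries out exactly this argument with concrete constants (obtaining $0.5\Delta\le f_r\|B_i^{r+1}\|\le 5\Delta$), and your observation that the Section~\ref{sec:prove-power} bounds still hold at exponent $r+1$ is indeed the only mild check needed.
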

\begin{proof}
Recall that $B^{r+1}_i=L^{r+1}_i+M_i+M'_i+R^{r+1}_i$.
From Section~\ref{sec:prove-power} we have that with probability $1-\OO(1/n)$,
$\|M\|_{row}+\|M'\|_{row}+\|R^r\|_{row} \le 0.15\sqrt{\smax}(p-q)^{r+1}(\smax)^{r} \le 0.16\sqrt{s}(p-q)^{r+1}s^r$. Furthermore, 

\begin{itemize}
    \item $\| L^{r+1}_i \| \ge \sqrt{\smin}(p-q)^{r+1}(\smin)^{r} \ge 0.99\sqrt{s}(p-q)^{r+1}s^r$ with probability $1-\OO(n^{-3})$.

    \item $\| L^{r+1}_i \| \le \sqrt{2\smax}(p-q)^{r+1}(\smax)^{r} \le 2.2\sqrt{s}(p-q)^{r+1}s^r$ with probability $1-\OO(n^{-3})$.
\end{itemize}

Thus, with probability $1-\OO(1/n)$
we have
\begin{align*}
&
\|f_rB^{r+1}_i \| \ge f_r(\|L^{r+1}_i\| -\|M\|_{row}-\|M'\|_{row}-\|R^r\|_{row})
\ge (p-q)^{-r}s^{-r} \cdot 0.5 \sqrt{s}(p-q)^{r+1}s^r
\\
&
\ge 0.5 \sqrt{s}(p-q) \ge 0.5\Delta
\end{align*}

Similarly, with probability $1-\OO(1/n)$,
$\|f_rB^{r+1}_i \| \le 5\Delta$. This completes the proof.

\end{proof}

\noindent Furthermore, let us also upper bound $\Delta$. 
Recall that $(p-q)s \gg \sqrt{p(1-q)} \sqrt{n} (\log n)^7$. This implies
\begin{equation}
\label{eq: Delta-bound}
\Delta=0.5(p-q)\sqrt{s} \ge (\log n)^7/\sqrt{n}
\end{equation}

We then have the following bound on $\|f_rB^{r+1}_i-P^k_B\bu\|$.

\begin{lemma}
\label{lem:power-SVD-distance}
Let $\Delta=0.5(p-q)\sqrt{s}$, $f_r=\frac{1}{(p-q)^rs^r}$ and the relation between $n$ and $k$ be as stated in Theorem~\ref{thm: centered-svd}. 
Let $\bu$ be the $i$-th column of $B$.
Then with probability $1-\OO(n^{-4})$ we have
\[
\|P^k_B\bu-f_rB^{r+1}_i\| =o(\Delta)
\]
\end{lemma}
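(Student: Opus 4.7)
The plan is to expand both $f_r B^{r+1}_i$ (viewed as the vector $B^{r+1}e_i$, identifying rows with columns via the symmetry of $B$) and $P^k_B \bu$ in the orthonormal eigenbasis of $B$, and to control the difference coefficient by coefficient. Writing $B = \sum_j \lambda_j \phi_j \phi_j^T$ with $|\lambda_1| \ge \cdots \ge |\lambda_n|$ and setting $\mu_j := \lambda_j/((p-q)s)$, and using that $\bu = B e_i$ gives $\phi_j^T \bu = \lambda_j (\phi_j)_i$, one obtains
\[
f_r B^{r+1} e_i = \sum_{j} \lambda_j \mu_j^{r} (\phi_j)_i \phi_j, \qquad P^k_B \bu = \sum_{j \le k} \lambda_j (\phi_j)_i \phi_j,
\]
so by orthonormality
\[
\|f_r B^{r+1}_i - P^k_B \bu\|^2 = \underbrace{\sum_{j \le k} \lambda_j^2 (\mu_j^{r}-1)^2 (\phi_j)_i^2}_{(\mathrm{I})} + \underbrace{\sum_{j > k} \lambda_j^2 \mu_j^{2r} (\phi_j)_i^2}_{(\mathrm{II})}.
\]

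Next I would pin down $\mu_j$. Combining Weyl's inequality (Corollary~\ref{corollary: spectrum}) with the cluster-size concentration (Fact~\ref{fact: size}), for $j \le k$ one has $\lambda_j(\meanB) = (p-q)|V_j| = (p-q)s\cdot(1 \pm O(1/(\log n)^5))$ while $|\lambda_j(B) - \lambda_j(\meanB)| \le \|R\| \le C_0 \sigma \sqrt{n} \le (p-q)s/(\log n)^{7}$ under the hypothesis of Theorem~\ref{thm: centered-svd}. Hence $|\mu_j - 1| = O(1/(\log n)^5)$, and a short Taylor/logarithm argument with $r = \log n$ gives both $(\mu_j^{r}-1)^2 = O(1/(\log n)^8)$ and $\mu_j^{2r} \ge \tfrac{1}{2}$ for $j \le k$. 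For $j > k$, Corollary~\ref{corollary: spectrum} yields $|\lambda_j| \le C_0 \sigma \sqrt{n}$, hence $|\mu_j| \le 1/(\log n)^{7}$ and $\mu_j^{2r}$ is super-polynomially small.

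Term $(\mathrm{II})$ is then immediate: bounding $\mu_j^{2r} \le 1/(\log n)^{14r}$ uniformly for $j>k$ and using $\sum_{j>k} (\phi_j)_i^2 \le 1$ together with $|\lambda_j| \le C_0 \sigma \sqrt{n}$ gives a quantity far smaller than $\Delta^2 = \Theta((p-q)^2 s)$ when $r=\log n$. Term $(\mathrm{I})$ factors as $O(1/(\log n)^8) \cdot \sum_{j \le k} \lambda_j^2 (\phi_j)_i^2$, so the one remaining obstacle is an $O(\Delta^2)$ bound on $\sum_{j \le k} \lambda_j^2 (\phi_j)_i^2$. This is the hard part: a direct Davis-Kahan bound on $\|P^k_B e_i\|^2$ only yields $O(1/s + 1/(\log n)^{14})$, whose second summand would force the unwanted regime $s = o((\log n)^{22})$.

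To sidestep this I plan to bootstrap from Lemma~\ref{lem: O-Delta} rather than invoke any eigenvector perturbation bound. Since every term in $\|f_r B^{r+1} e_i\|^2 = \sum_j \lambda_j^2 \mu_j^{2r} (\phi_j)_i^2$ is nonnegative and $\mu_j^{2r} \ge \tfrac{1}{2}$ for $j \le k$,
\[
\|f_r B^{r+1} e_i\|^2 \ge \tfrac{1}{2} \sum_{j \le k} \lambda_j^2 (\phi_j)_i^2 .
\]
By the symmetry of $B$, $\|B^{r+1} e_i\| = \|B^{r+1}_i\|$, so Lemma~\ref{lem: O-Delta} gives $\|f_r B^{r+1} e_i\|^2 = O(\Delta^2)$ with probability $1-\OO(1/n)$, whence $\sum_{j \le k} \lambda_j^2 (\phi_j)_i^2 = O(\Delta^2)$ and consequently $(\mathrm{I}) = O(\Delta^2/(\log n)^8) = o(\Delta^2)$. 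Combined with the bound on $(\mathrm{II})$, this yields $\|f_r B^{r+1}_i - P^k_B \bu\|^2 = o(\Delta^2)$, which is the lemma.
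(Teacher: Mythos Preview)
Your proposal is correct and follows essentially the same route as the paper: expand $f_r B^{r+1}_i$ and $P^k_B\bu$ in the eigenbasis of $B$, control the top-$k$ coefficients via Weyl's inequality plus the cluster-size concentration to get $\mu_j^r = 1 + o(1)$, dispose of the tail $j>k$ by the super-polynomially small $\mu_j^{2r}$, and then bootstrap the bound on $\sum_{j\le k}\lambda_j^2(\phi_j)_i^2$ from $\|f_r B^{r+1}_i\|^2 = O(\Delta^2)$ (Lemma~\ref{lem: O-Delta}) together with Equation~\eqref{eq: Delta-bound}. The paper phrases the last step as $(f_r\lambda_i^r - 1)^2 = o((f_r\lambda_i^r)^2)$ so that term~(I) is directly $o(\|f_r B^{r+1}_i\|^2)$, while you factor out $O((\log n)^{-8})$ and use $\mu_j^{2r}\ge \tfrac12$; these are the same argument.
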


\begin{proof}
We first note that $B^{r+1}_i=B^r\bu$. This follows from the fact that $\bu$ is the $i$-th column of $B$ and that $B$ is a symmetric matrix. Then we observe 
$P^k_B\bu-f_rB^r\bu$. 

Let $\bm{p_i}, 1 \le i \le n$ be the $n$ the orthonormal eigenvectors of $B$ in descending order of eigenvalues $\lambda_1(B) \ge \ldots \ge \lambda_n(B)$. 
We denote $\lambda_i(B)$ simply as $\lambda_i$. Then any column $\bu$ can be represented as 
$\bu=\sum_{j=1}^n \langle \bm{p_j},\bu \rangle \bm{p_j}$.
It implies $B^r\bu= \sum_{j=1}^n (\lambda_i)^r \langle \bm{p_j},\bu \rangle \bm{p_j}$.

On the other hand, as $P^k_B$ is the projection operator onto the top $k$ eigenvectors of $B$, we have
$P^k_B = \sum_{i=1}^k \langle \bm{p_{i}},\bu \rangle \bm{p_i}$.
Substituting these descriptions we get
\begin{align}
\label{eq: Br-PkB}
f_rB^r\bu -P^k_B\bu
&=
\sum_{j=1}^n f_r(\lambda_j)^r \langle \bm{p_j},\bu \rangle \bm{p_j}- \sum_{i=1}^k \nonumber \langle \bm{p_{i}},\bu \rangle \bm{p_i}
\\
&
=
\sum_{i=1}^k (f_r(\lambda_i)^r -1) \langle \bm{p_{i}},\bu \rangle \bm{p_i}
+\sum_{j=k+1}^n f_r(\lambda_j)^r \cdot \langle \bm{p_j},\bu \rangle \bm{p_j}
\nonumber \\
&
\le
\sum_{i=1}^k \left|f_r(\lambda_i)^r -1\right| \langle \bm{p_{i}},\bu \rangle \bm{p_i}
+\sum_{j=k+1}^n f_r(\lambda_j)^r \cdot \langle \bm{p_j},\bu \rangle \bm{p_j}
\nonumber \\
\implies
\|f_rB^r\bu -P^k_B\bu\|^2
&
\le 
\sum_{i=1}^k (\left|f_r(\lambda_i)^r -1\right|^2 \langle \bm{p_{i}},\bu \rangle)^2
+
\sum_{j=k+1}^n
(f_r(\lambda_j)^{2r} \langle \bm{p_j},\bu \rangle)^2
\end{align}
 
Now we analyze $f_r(\lambda_i)^r$.
Recall that $\lambda_i=\lambda_i(B)$ and $B=L+R$. Then Weyl's inequality implies 
$ \lambda_i(L)-\|R\| \le \lambda_i(B) \le \lambda_i(L)+\|R\|$ for $i \le k$ and $\lambda_i(B) \le \|R\|$ for $i>k$ as $L$ has rank $k$.
In this direction, $\lambda_i(L)=(p-q)|V_i|$ where $V_i$ is the $i$-th largest community. Furthermore, 
$\|R\| \le C_0\sqrt{p(1-q)}\sqrt{n}$ with probability $1-\OO(n^{-3})$. We also have
$(p-q)\smin \ge C_0 \sqrt{p(1-q)} \sqrt{n} (\log n)^7$.
We then have two observations.

\begin{enumerate}
    \item Let $i \le k$. Then $| \lambda_i(B) -(p-q)s | \le \|R\| + (p-q)(\smax-s)+(p-q)(s-\smin)$
    Then with probability $1-\OO(n^{-3})$ this implies 
    $| \lambda_i(B) -(p-q)s | \le (p-q)s(\log n)^{-6}+ 2(p-q)s(\log n)^{-4} \le 
    (p-q)s(\log n)^{-3}$. This implies $f_r(\lambda_i)^r = 1 \pm (\log n)^{-3}$
    and thus $(f_r(\lambda_i)^r-1)^2=o((f_r\lambda_i)^{2r})$.
    
    \item Let $i>k$. Then $\lambda_i \le (p-q)s(\log n)^{-5}$ with probability $1-\OO(n^{-3})$. Then with the same probability $f_r(\lambda_i)^r \le
    f_r(p-q)^rs^r (\log n)^{-5r} \le (\log n)^{-5r} \ll n^{-2}$.
\end{enumerate}

Substituting this in Equation~\eqref{eq: Br-PkB} we get

\begin{align*}
\|f_rB^r\bu -P^k_B\bu\|^2
= &
\sum_{i=1}^k o(f_r(\lambda_i)^r )^2 (\langle \bm{p_{i}},\bu \rangle)^2
+
\sum_{j=k+1}^n
\frac{1}{n^{4}} (\langle \bm{p_j},\bu \rangle)^2
\\
=&
o(\|f_rB^{r+1|}\|^2) + o(n^{-2}) &\text{[From definition of $f_rB^{r+1}$ and $\|u\|^2 \le n$]}
\\
=&
o(\Delta^2) & \text{[Lemma~\ref{lem: O-Delta} and Equation~\eqref{eq: Delta-bound}]}
\end{align*}

This implies that with probability $1-\OO(n^{-3})$ we have $\|f_rB^{r+1}_i-P^k_B\bu\|=o(\Delta)$ for any $i$
and then applying union bound on $i$ completes the proof. 
\end{proof}

\section{Proving Entry-wise Concentration Bounds of via Random Partition}
\label{sec:proof}

In this section we bound the absolute value of the
entries of $R^tL$ for any $t \le \log n$, proving Lemma~\ref{lem:last-step}. Let us first recall the statement of the lemma. We want to show that with high probability ($1-\OO(n^{-3})$) the absolute value of entries of $R^tL$, denoted as $(R^tL)_{a,b}$ is upper bounded by
\[
\left| (R^tL)_{a,b}  \right| \le C_2 \sqrt{p}(\log n)^6 \cdot (p-q)\sqrt{\smax} \cdot ((p-q)\smax)^{t-1}  
\]
for some constant $C_2$. 
We first expand $(R^tL)_{a,b}$ as follows. 
\[
(R^tL)_{a,b}=
\sum_{\ell_1, \ldots ,\ell_t} R_{a,\ell_1}R_{\ell_1,\ell_2}\cdots R_{\ell_{t-1},\ell_t}L_{\ell_t,b}
\]

Thus, each entry of $R^tL$ is a sum of multivariate monomials, with the degree of each monomial being $t+1$. We can denote each monomial with their index list $\Ell:=(\ell_1, \ldots, \ell_t) \in [n]^t$ which establishes
a one-to-one correspondence between monomials $P_{\Ell}$ and index lists $\Ell$. Then we have $(R^tL)_{a,b}=\sum_{\Ell \in [n]^t} P_{\Ell}$.

Against this setting, we break the set of $\Ell$ into several groups based on the similarity of the indices. 
We define an encoding of the lists to a $t$-length array $X$. Then all $\Ell$ that map into the same encoding $X$ form one such group. The encoding is as follows.

\begin{definition}[Encoding index lists to $X$]
\label{def: encoding}
Given any index list $\Ell=(\ell_1, \ldots, \ell_t) \in [n]^t$, we encode it into a $t$-length array $X$ as follows.
\begin{itemize}
    \item Set $X[1]=1$.
    \item For all $i\geq 2$, let $y:= \max \{X[1], \ldots , X[i-1]\}$. If $\ell_{i}\neq \ell_{j}$ for all $j<i$, we set $X[i]=y+1$. Otherwise, $X[i]=X[j]$ for any $j<i$ s.t $\ell_{i}=\ell_j$.  
\end{itemize}
\end{definition}

Let the set of all such $X$ be $\mX$. 
Then the size of $\mX$ follows simply from its definition.

\begin{lemma}
\label{lem: size-of-T}
The size of $\mX$ is upper bounded by $t^t$, i.e. $|\mX|\leq t^t$.
\end{lemma}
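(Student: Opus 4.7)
The plan is to bound $|\mathcal{X}|$ by counting, position by position, how many distinct values $X[i]$ can take. At each step $i$, the rule in Definition~\ref{def: encoding} says that $X[i]$ is either equal to some previously seen value $X[j]$ with $j < i$, or else equal to $y+1$ where $y = \max\{X[1],\ldots,X[i-1]\}$. So $X[i]$ is constrained to lie in the set $\{X[1],\ldots,X[i-1]\} \cup \{y+1\}$.

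First I would establish by a simple induction that the running maximum $y$ after the first $i-1$ positions is at most $i-1$. The base case $X[1]=1$ is immediate. For the inductive step, if $\max\{X[1],\ldots,X[i-1]\} \le i-1$, then $X[i]$ is either at most this maximum (if it repeats a previous value) or equals $y+1 \le i$; in either case $\max\{X[1],\ldots,X[i]\} \le i$. Consequently, at position $i$ the entry $X[i]$ takes values in a set of cardinality at most $i$ (namely $\{1,2,\ldots,y+1\} \subseteq \{1,\ldots,i\}$).

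Multiplying across positions gives $|\mathcal{X}| \le \prod_{i=1}^{t} i = t!$. Since $t! \le t^t$, the stated bound follows. The argument is entirely combinatorial and has no real obstacle; the only care needed is to verify that the ``new value'' branch cannot exceed $i$, which is exactly what the induction on the running maximum delivers.
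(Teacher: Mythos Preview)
Your argument is correct. The paper does not spell out a proof, merely remarking that the bound ``follows simply from its definition''; the intended one-line argument is presumably that each $X[i]$ lies in $\{1,\dots,t\}$ (since the running maximum increases by at most one per step), giving $t^t$ directly. Your induction sharpens this to $X[i]\in\{1,\dots,i\}$ and hence $|\mathcal{X}|\le t!$, which is a strictly better bound, and then relaxes to $t^t$ as stated.
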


Now, corresponding to any $X \in \mX$, we define by $\Z(X)$ the sum of monomials $P_{\Ell}$ such that $\Ell$ maps to that particular $X$. 
Let $t':=\max \{X[1], \ldots, X[t]\}$. Then $\Z(X)$ can be expressed as follows,
\[
\Z(X)=
\sum_{\substack{ \ell_1, \ldots , \ell_{t'} \in ([n])^{t'} \\ \text{ s.t } \ell_{i_1} \ne \ell_{i_2} \forall i_1, i_2 \in [t'] }} R_{a,\ell_{X[1]}}R_{\ell_{X[1]},\ell_{X[2]}} \cdots R_{\ell_{X[t-1]},\ell_{X[t]}}L_{\ell_{X[t]},b}
\]
And we have the inequality 
\begin{equation}
\label{eq: new-break}
|(R^L)_{a,b}| \le \sum_{X \in \mX} \abs{\Z(X)} \end{equation}

We prove upper bounds on $\abs{\Z(X)}$ for any arbitrary $X$ with probability
$1-(t^{-t} \cdot n^{-5})$, and then use Lemma~\ref{lem: size-of-T} to upper bound the total value using a union bound.

We first prove it for the simplest case, which is for the encoding $X$ where all indices are different. That is, the corresponding $X$ has $X[i]=i, 1 \le i \le t$. We denote $W^t_{a,b}:=\Z(X)$ for this particular $X$, and bound $\abs{W^t_{a,b}}$. Then we extend our result for a generic $X \in \mX$.

\subsection*{The case with all different indices.}

In this part we denote $\ell_0:=a$ and $\ell_{t+1}:=b$ for ease of presentation. We want to to upper bound the sum 
\[
\left|\W \right|= \left| \sum_{(\ell_1,  \ldots , \ell_{t}): \ell_x \ne \ell_y \forall x,y}
R_{\ell_0,\ell_1} \cdots R_{\ell_{t-1},\ell_{t}}L_{\ell_t, \ell_{t+1}}
\right|
\]

We use a random partition idea to bound this summation. 
Let $[n]:=\{1, \ldots , n\}$.
We partition $[n]$ into $t$ many sets 
$T=\{T_1, \ldots ,T_t\}$ by putting each element into one of the sets uniformly at random.
Then we  consider the summation
\[\W(T)=
\sum\limits_{\ell_1\in T_1, \ldots ,\ell_t \in T_t} R_{\ell_0,\ell_1}R_{\ell_1,\ell_2}\cdots R_{\ell_{t-1},\ell_t}L_{\ell_t,\ell_{t+1}}\]

As $T$ is a  partition formed uniformly at random, the probability that any monomial of $W^t_{\ell_0,\ell_{t+1}}$ is present in $W^t_{\ell_0,\ell_{t+1}}(T)$ is $(1/t)^t$. 
Then we have 
\begin{align*}
&\W =t^t \cdot \underset{T}{\mathbb{E}} [ \W(T)]
\\
\implies 
&
\abs{\W}=t^t\abs{\underset{T}{\mathbb{E}} [ \W(T)]}
\end{align*}
We then have the following Lemma. 
\begin{lemma}
\label{lem: W-total}
For any $0 <t < \log n $ and $\ell_0,\ell_{t+1}$
we have 
$\left| \W \right| \le t^t \cdot  \underset{T}{\mathbb{E}} \left[\left| \W(T) \right| \right]$. 
\end{lemma}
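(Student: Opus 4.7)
The plan is to separate the randomness in the partition $T$ from the randomness in $R$. Fix the entries of $R$ for the moment, so that each monomial
\[
P_\Ell := R_{\ell_0,\ell_1} R_{\ell_1,\ell_2} \cdots R_{\ell_{t-1},\ell_t} L_{\ell_t,\ell_{t+1}}
\]
is a fixed real number, and write $\W(T) = \sum_\Ell \mathbf{1}_\Ell(T) \cdot P_\Ell$, where the outer sum ranges over index lists $\Ell = (\ell_1,\ldots,\ell_t)$ with pairwise distinct entries and $\mathbf{1}_\Ell(T)$ is the indicator that $\ell_i \in T_i$ for every $i \in [t]$. By linearity of expectation over $T$,
\[
\mathbb{E}_T[\W(T)] = \sum_\Ell \Pr\nolimits_T[\mathbf{1}_\Ell(T) = 1] \cdot P_\Ell.
\]

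Next, I would compute $\Pr_T[\mathbf{1}_\Ell(T) = 1]$ exactly. Since each element of $[n]$ is placed into exactly one of the $t$ parts $T_1,\ldots,T_t$ independently and uniformly at random, and since the $\ell_i$ are pairwise distinct, the events ``$\ell_i \in T_i$'' for $i \in [t]$ concern disjoint elements of $[n]$ and are therefore mutually independent. Each such event has probability $1/t$, so $\Pr_T[\mathbf{1}_\Ell(T) = 1] = t^{-t}$. Plugging this into the previous identity yields $\mathbb{E}_T[\W(T)] = t^{-t} \sum_\Ell P_\Ell = t^{-t} \cdot \W$, which rearranges to $\W = t^t \cdot \mathbb{E}_T[\W(T)]$.

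Finally, I would apply the elementary inequality $|\mathbb{E}_T[X]| \le \mathbb{E}_T[|X|]$ (valid for any random variable $X$) to $X = \W(T)$ to conclude
\[
|\W| \;=\; t^t \cdot \bigl|\mathbb{E}_T[\W(T)]\bigr| \;\le\; t^t \cdot \mathbb{E}_T\bigl[|\W(T)|\bigr],
\]
which is the stated inequality. I do not expect any real obstacle in this proof: the lemma is essentially a first-moment identity paired with Jensen's inequality, and its role is purely organizational. It reduces the analysis of the heavily correlated sum $\W$ to that of the restricted sum $\W(T)$, whose disjoint-block structure ($T_1,\dots,T_t$ mutually disjoint) is precisely what will unlock the iterated Chernoff argument sketched in Section~\ref{sec: sketch-power}. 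The genuine technical difficulty lies downstream, in bounding $|\W(T)|$ for a typical $T$ and then converting such pointwise bounds into a usable estimate on $\mathbb{E}_T[|\W(T)|]$ via a higher-moment Markov argument, since a naive union bound over all $T$ is too expensive.
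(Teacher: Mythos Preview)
Your proposal is correct and matches the paper's own argument essentially line for line: the paper also observes that each distinct-index monomial survives the random partition with probability $t^{-t}$, deduces $\W = t^t\,\mathbb{E}_T[\W(T)]$ by linearity, and then passes to absolute values via $|\mathbb{E}_T[\cdot]|\le \mathbb{E}_T[|\cdot|]$. Your write-up is in fact more detailed than the paper's, which states the identity without spelling out the independence step.
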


Against this setup we first upper bound $\left| \W(T) \right|$ for a fixed $T$, and then 
obtain $\underset{T}{\mathbb{E}} [|\W(T)|]$
using the higher moment method.

\paragraph{Analyzing $\abs{\W(T)}$ for a fixed $T$:}
Since all of the random variables in any monomial is different, we can write
\[
\left| \W(T) \right|= \left| \sum_{\ell_1 \in T_1, \ldots, \ell_{t-1} \in T_{t-1}} R_{\ell_0,\ell_1} \cdots R_{\ell_{t-2},\ell_{t-1}}
\sum_{\ell_t \in T_t} R_{\ell_{t-1},\ell_t}L_{\ell_t,\ell_{t+1}}
\right| \]
To bound this term we define
\[
F(i,\ell_{i}) = \sum_{\ell_{i+1}\in T_{i+1},\dots,\ell_{t}\in T_t} R_{\ell_{i},\ell_{i+1}}\cdots R_{\ell_{t-1},\ell_{t}}L_{\ell_{t},\ell_{t+1}}
\]
This implies for every $i$ and $\ell_{i-1}$,
$
\abs{F(i-1,\ell_{i-1})} = \abs{\sum_{\ell_{i}\in T_{i}} R_{\ell_{i-1},\ell_{i}}F(i,\ell_{i})}.$ 
We plan to recursively upper bound $\abs{F(i,\ell_{i})}$ in the descending order of $i$, starting with $F(t-1,\ell_{t-1}):=\sum_{\ell_t \in T_t} R_{\ell_{t-1},\ell_t}L_{\ell_t,\ell_{t+1}}$ and ending at $\abs{F(0,\ell_0)}$. We upper bound this using the fact that since 
$R_{\ell_{i-1},\ell_{i}}$ is a zero mean random variable, so is $R_{\ell_{i-1},\ell_{i}}F(i,\ell_{i})$. Then 
an upper bound on $\abs{F(i,\ell_{i})}$ allows us to apply Chernoff bound on this sum. 

Recall that any column of $L$ has at most $\smax$ non zero ( each $(p-q)$ ) entry and $R_{i,j}$ is a zero-mean independent random variable which is either $1-p$ with probability $p$ and $-p$ otherwise, or $1-q$ with probability $q$, or $-q$ otherwise.
Then Chernoff bound (Theorem~\ref{thm: chernoff}) implies the following.

\begin{corollary}
\label{cor:chernoff}
For all $\ell_{t-1}\in T_{t-1}$,
\[
\Pr(\abs{F(t-1,\ell_{t-1})} \ge 96(p-q)\sqrt{p} \sqrt{\smax} (\log n)^3) \le n^{-4(\log n)^2}.
\]
\end{corollary}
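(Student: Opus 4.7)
The plan is to recognize that $F(t-1,\ell_{t-1})$ is a sum of independent centered bounded random variables in disguise, so the bound follows from a direct Chernoff/Bernstein tail estimate. First I would unpack $F(t-1,\ell_{t-1})$ using the structure of $L$ recorded in Fact~\ref{fact: L}: the entry $L_{\ell_t,\ell_{t+1}}$ equals $(p-q)$ exactly when $\ell_t$ and $\ell_{t+1}$ lie in the same community, and is $0$ otherwise. Letting $V^{*}$ denote the community containing $\ell_{t+1}$, this collapses the sum to
\[
F(t-1,\ell_{t-1}) \;=\; (p-q)\!\!\sum_{\ell_t\in T_t\cap V^{*}}\!\! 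R_{\ell_{t-1},\ell_t},
\]
so it suffices to control the centered sum $S:=\sum_{\ell_t\in T_t\cap V^{*}} R_{\ell_{t-1},\ell_t}$ at the scale $96\sqrt{p\,\smax}(\log n)^3$.

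Next I would verify the three ingredients needed for a Bernstein-type bound on $S$. First, the number of surviving summands is at most $|V^{*}|\le \smax$. Second, because $\ell_{t-1}\in T_{t-1}$ and $T_{t-1}\cap T_t=\emptyset$, the unordered pairs $\{\ell_{t-1},\ell_t\}$ with $\ell_t\in T_t\cap V^{*}$ are all distinct, so the variables $R_{\ell_{t-1},\ell_t}$ appearing in $S$ are jointly independent. Third, by Fact~\ref{fact: L}, each such $R_{\ell_{t-1},\ell_t}$ is mean zero, bounded in $[-1,1]$, and has variance at most $p(1-q)\le p$ (using $q\le 0.75$).

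Finally I would apply a Bernstein inequality to $S$ at threshold $96\sqrt{p\,\smax}(\log n)^3$ and reinstate the $(p-q)$ prefactor. The denominator in Bernstein's exponent is the sum of a variance piece $\sim p\,\smax$ and a linear-in-deviation piece $\sim 96\sqrt{p\,\smax}(\log n)^3$; the only thing to check, which I regard as the main (though mild) obstacle, is that the variance term dominates, putting us in the sub-Gaussian regime. The standing hypotheses $(p-q)\smax\ge C(\log n)^7\sqrt{p(1-q)n}$ and $\max\{p(1-p),q(1-q)\}\ge C_0(\log n)/n$ together easily force $p\,\smax\gg(\log n)^{6}$, after which the tail is of order $\exp(-c(\log n)^6)$ for a universal constant $c$, well below the target $n^{-4(\log n)^2}=\exp(-\Theta((\log n)^3))$.
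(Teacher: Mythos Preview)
Your proposal is correct and follows the same approach as the paper: reduce via the block-diagonal structure of $L$ to a sum of at most $\smax$ independent mean-zero bounded variables and apply a concentration inequality. The paper's own argument is a one-line invocation of the Chernoff bound after the same reduction; your version is slightly more careful (explicitly checking independence via $T_{t-1}\cap T_t=\emptyset$ and using Bernstein with the variance bound $p(1-q)$), but the substance is identical.
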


Then applying the definition recursively we have the following lemma. 

\begin{lemma}
\label{lem:F}
Let $\abs{F(i,\ell_{i})}, 1 \le i <t-1$ be less than $\beta$ for all $\ell_{i} \in T_{i}$ with probability $p_{\beta}$. 
Then 
\[
\Pr \left( \abs{F(i-1,\ell_{i-1})} \le \beta \cdot 96 \sqrt{p} \sqrt{n} (\log n)^3 \right) 
\ge (1-n^{-5(\log n)^2})p_{\beta}
\]
\end{lemma}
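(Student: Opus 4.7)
The plan is to read Lemma~\ref{lem:F} as a single inductive step of the recursion $F(i-1,\ell_{i-1})=\sum_{\ell_i\in T_i}R_{\ell_{i-1},\ell_i}\,F(i,\ell_i)$: first condition on the hypothesized event that the inner $F(i,\cdot)$'s are already bounded by $\beta$, then use the disjointness of the random partition $T$ to argue that the outer sum is, after conditioning, a sum of independent mean-zero bounded random variables, and finally apply a Bernstein-type tail bound.

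\textbf{Step 1 (Conditioning).} Let $E$ denote the event $\{|F(i,\ell_i)|\le\beta\text{ for all }\ell_i\in T_i\}$, which has probability at least $p_\beta$ by hypothesis. Since
\[
\Pr\!\left[|F(i-1,\ell_{i-1})|\le 96\beta\sqrt{p}\sqrt{n}(\log n)^3\right]\ge \Pr\!\left[|F(i-1,\ell_{i-1})|\le 96\beta\sqrt{p}\sqrt{n}(\log n)^3\,\big|\,E\right]\cdot p_\beta,
\]
it suffices to lower-bound the conditional probability by $1-n^{-5(\log n)^2}$.

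\textbf{Step 2 (Independence from disjointness).} The structural observation is that $F(i,\ell_i)$ depends only on $R$-entries whose index pair $\{a,b\}$ has the form $(T_j,T_{j+1})$ for some $j\ge i$; this is immediate from its definition. Since $T$ is a partition and $\ell_{i-1}\in T_{i-1}$, no such pair can equal $\{\ell_{i-1},\ell_i\}$ for any $\ell_i\in T_i$. Combined with the symmetry of $R$ and the independence of its distinct upper-triangular entries (Fact \ref{fact: L}), this shows that, for fixed $\ell_{i-1}$, the family $\{R_{\ell_{i-1},\ell_i}\}_{\ell_i\in T_i}$ is mutually independent and jointly independent of the $\sigma$-algebra generated by $\{F(i,\ell_i')\}_{\ell_i'\in T_i}$. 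Conditioning on the values of the $F(i,\cdot)$'s therefore keeps the summands $Y_{\ell_i}:=R_{\ell_{i-1},\ell_i}F(i,\ell_i)$ independent and mean-zero; on $E$, we have $|Y_{\ell_i}|\le\beta$ and $\mathrm{Var}(Y_{\ell_i})\le p\beta^2$.

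\textbf{Step 3 (Bernstein).} With $|T_i|\le n$, total variance $V\le np\beta^2$, and range bound $B=\beta$, Bernstein's inequality gives
\[
\Pr\!\left[\,\Big|\sum_{\ell_i\in T_i}Y_{\ell_i}\Big|>t\,\Big|\,E\right]\le 2\exp\!\left(-\frac{t^2}{2V+(2/3)Bt}\right).
\]
Setting $t=96\beta\sqrt{np}(\log n)^3$ and splitting into the variance-dominated regime $\sqrt{np}\ge 32(\log n)^3$ and the range-dominated regime $\sqrt{np}<32(\log n)^3$, the exponent in each case is easily bounded by $-5(\log n)^3$ with large slack (using $np\ge C_0\log n$ from the standing assumption on $\sigma^2$). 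Hence the conditional tail probability is at most $n^{-5(\log n)^2}$, and combining with Step~1 yields the claim.

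\textbf{Main obstacle.} The only delicate point is the independence claim in Step 2: because $R$ is symmetric, one must carefully verify that no $R_{\ell_{i-1},\ell_i}$ can appear inside any $F(i,\ell_i')$. The chain structure of $F$ (only $R$-entries indexed by consecutive blocks $(T_j,T_{j+1})$ with $j\ge i$), together with $T_{i-1}\cap T_j=\emptyset$ for all $j\ge i$, rules this out cleanly -- which is precisely where the random partition pays for itself. Once this is in hand, Step~3 is a routine Bernstein calculation with the constant $96$ chosen generously so that both regimes of the denominator yield an exponent comfortably exceeding $5(\log n)^3$.
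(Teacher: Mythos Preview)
Your proposal is correct and follows essentially the same approach as the paper's proof: condition on the event that the inner $F(i,\cdot)$'s are bounded, then apply a concentration bound (you use Bernstein, the paper just invokes ``Chernoff'') to the sum $\sum_{\ell_i\in T_i}R_{\ell_{i-1},\ell_i}F(i,\ell_i)$ of bounded mean-zero terms, and finish with $|T_i|\le n$. Your Step~2 spells out the independence justification via the block structure of the partition more carefully than the paper does, but the underlying argument is the same.
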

\begin{proof}
From the definition of $F(i-1,\ell_{i-1})$ we have \[
\abs{F(i-1,\ell_{i-1})} = \abs{\sum_{\ell_{i} \in T_{i}} R_{\ell_{i-1},\ell_{i}} F(i,\ell_{i})}
\]
Here $R_{\ell_{i-1},\ell_{i}} F(i,\ell_{i})$ are zero mean random variables, and with probability $p_{\beta}$, $|F(i,\ell_{i})|\leq\beta$ for all $\ell_i$. Then from the Chernoff bound we have
\[
\Pr \left( \abs{\sum_{\ell_i \in T_i} \ R_{\ell_{i-1},\ell_{i}} F(i,\ell_{i})}
\ge \beta\cdot 96\sqrt{p}\sqrt{|T_i|}(\log n)^3 
\right)
\le n^{-5 (\log n)^2}
\]
Then applying $|T_i| \le n$ completes the proof.
\end{proof}

Next using the recursive definition of $F(i,\ell_i)$ along with Lemma~\ref{lem:F}
we upper bound $|\W(T)|$.

\begin{lemma}
\label{lem: first fixed T}
For any fixed partition $T$ and $t \le \log n$ the sum $\abs{\W(T)}$ is upper bounded by
$C_t:=(96\sqrt{p} (p-q) \sqrt{\smax} (\log n)^3 ) \cdot (96 \sqrt{p} \sqrt{n} (\log n)^3 )^{t-1}$
with probability $1-n^{-3(\log n)^2}$.
\end{lemma}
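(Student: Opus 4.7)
The plan is to iteratively apply Lemma~\ref{lem:F}, starting from the base case at index $t-1$ and working down through indices $t-2, t-3, \ldots, 0$. Since $F(0,\ell_0) = \W(T)$ (recall $\ell_0 = a$), after $t-1$ applications of Lemma~\ref{lem:F} on top of the base Chernoff bound from Corollary~\ref{cor:chernoff}, I obtain the claimed bound on $|\W(T)|$. The quantitative growth comes out right because each application multiplies the previous bound by exactly the factor $96\sqrt{p}\sqrt{n}(\log n)^3$ that appears in Lemma~\ref{lem:F}.

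First I would establish the base layer. By Corollary~\ref{cor:chernoff}, for each fixed $\ell_{t-1} \in T_{t-1}$ we have $|F(t-1,\ell_{t-1})| \le \beta_0 := 96(p-q)\sqrt{p}\sqrt{\smax}(\log n)^3$ with probability at least $1 - n^{-4(\log n)^2}$. Taking a union bound over the at most $n$ choices of $\ell_{t-1}\in T_{t-1}$, this bound holds simultaneously for all $\ell_{t-1}$ with probability at least $1 - n^{-4(\log n)^2 + 1}$.

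Next I would set up an induction. Define $\beta_i := \beta_0 \cdot (96\sqrt{p}\sqrt{n}(\log n)^3)^i$ and prove by induction on $i\in\{0,1,\ldots,t-1\}$ that $|F(t-1-i,\ell_{t-1-i})| \le \beta_i$ for every $\ell_{t-1-i} \in T_{t-1-i}$ with probability at least $1 - (i+1)\cdot n^{-4(\log n)^2 + 1}$. The base case $i=0$ is the statement above. For the inductive step, fix $\ell_{t-2-i}\in T_{t-2-i}$; Lemma~\ref{lem:F} (with $\beta = \beta_i$ and conditioning on the inductive hypothesis which supplies $p_\beta$) gives $|F(t-2-i,\ell_{t-2-i})| \le \beta_{i+1}$ with the required conditional probability, and a union bound over the at most $n$ choices of $\ell_{t-2-i}\in T_{t-2-i}$ absorbs a further $n \cdot n^{-5(\log n)^2}$ into the failure probability. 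Setting $i = t-1$ yields $|\W(T)| = |F(0,\ell_0)| \le \beta_{t-1} = C_t$ with probability at least $1 - t\cdot n^{-4(\log n)^2+1} \ge 1 - n^{-3(\log n)^2}$ for $t \le \log n$ and $n$ large enough.

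The only subtlety, which is really bookkeeping rather than a genuine obstacle, is that at every level $i$ the hypothesis feeding the next application of Lemma~\ref{lem:F} must be a \emph{uniform} bound over all $\ell_i \in T_i$, not merely for one index. This is why a union bound over $|T_i| \le n$ is built into each inductive step. Because the per-index failure probability $n^{-5(\log n)^2}$ from Lemma~\ref{lem:F} is super-polynomially smaller than $n^{-1}$, these $t \le \log n$ union bounds contribute only a negligible $t\cdot n^{-5(\log n)^2 + 1}$ total, well within the target failure rate $n^{-3(\log n)^2}$.
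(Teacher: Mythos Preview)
Your proposal is correct and follows essentially the same approach as the paper: start from the base Chernoff bound of Corollary~\ref{cor:chernoff} and then apply Lemma~\ref{lem:F} iteratively $t-1$ times to reach $|F(0,\ell_0)|=|\W(T)|\le C_t$. Your explicit bookkeeping of the per-level union bounds over $\ell_i\in T_i$ is more careful than the paper's terse accounting (which just records a final failure probability of $t^2 n^{-4(\log n)^2}$), but the underlying argument is identical.
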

\begin{proof}
From Corollary~\ref{cor:chernoff}  we have
that 
\[
\Pr \left(\abs{F(t-1,\ell_{t-1})} \le 96\sqrt{p} (p-q) \sqrt{\smax} (\log n)^3 \right) \ge 1-n^{-4(\log n)^2} 
\]
for all values of $t-1$. 
Then application of Lemma~\ref{lem:F} $t-1$ 
times give us
\[
\Pr \left( \abs{F(0,\ell_0)} \le (96\sqrt{p} (p-q) \sqrt{\smax} (\log n)^3 ) \cdot (96 \sqrt{p} \sqrt{n} (\log n)^3 )^{t-1} \right)  \ge 1-t^2n^{-4 (\log n)^2}    
\]
and putting $t \le \log n$
completes the proof. 
\end{proof}

Furthermore $\W(T)$ is the sum of $n^t$ many monomials for any fixed $T$, and the absolute value of each monomial is less than $p-q$. This gives us the following fact.

\begin{fact}
\label{fact-W:max}
For any fixed partition $T$ of $[n]$ 
into $t$ many sets, $\abs{\W(T)} \le (p-q)n^t$.
\end{fact}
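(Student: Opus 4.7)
The plan is to argue by a direct triangle inequality combined with a trivial upper bound on the number of index tuples. The sum $\W(T)$ is taken over all $\ell_1 \in T_1, \dots, \ell_t \in T_t$, so the number of monomials equals $\prod_{i=1}^t |T_i|$. Since $T_1, \dots, T_t$ partition $[n]$, each $|T_i| \le n$, hence the number of terms is at most $n^t$. (One could sharpen this via AM--GM to $(n/t)^t$, but the stated bound only asks for $n^t$ so we do not need the tighter estimate.)

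Next I would bound each individual monomial $R_{\ell_0,\ell_1}R_{\ell_1,\ell_2}\cdots R_{\ell_{t-1},\ell_t}L_{\ell_t,\ell_{t+1}}$ in absolute value. By Fact~\ref{fact: L}, every entry $R_{i,j}$ lies in $[-1,1]$ (the possibilities are $1-p, -p, 1-q, -q$ with $p,q \le 0.75$, all of which have absolute value at most $1$). Similarly, every entry $L_{i,j}$ equals either $p-q$ or $0$, so $|L_{\ell_t,\ell_{t+1}}| \le p-q$. Multiplying these bounds, each monomial contributes at most $1^t \cdot (p-q) = p-q$ in absolute value.

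Combining the two estimates via the triangle inequality yields $|\W(T)| \le (\text{number of terms}) \cdot (\text{max term size}) \le n^t \cdot (p-q)$, which is the claimed bound. There is no real obstacle here; the fact is essentially a worst-case counting bound serving as a crude deterministic fallback, to be used in conjunction with the much sharper high-probability bound from Lemma~\ref{lem: first fixed T} when taking higher moments over the random partition $T$.
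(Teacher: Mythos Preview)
Your argument is correct and matches the paper's own justification, which is the single sentence preceding the fact: ``$\W(T)$ is the sum of $n^t$ many monomials for any fixed $T$, and the absolute value of each monomial is less than $p-q$.'' Your version is in fact slightly more careful, noting that the number of terms is $\prod_i |T_i| \le n^t$ rather than exactly $n^t$.
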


Now we analyze  $\underset{T}{\mathbb{E}} [|\W(T)|]$ 
using the higher moment method.

\paragraph{Upper bounding the expectation.}
Let $\beta$ be the number of possible partitions
of $[n]$ into $T$ many uniformly formed sets.
Since each partition is chosen with equal probability, we have
$S:=\underset{T}{\mathbb{E}} [|\W(T)|]= \frac{1}{\beta} \sum_T |\W(T)|$.
Then Markov's inequality implies $\Pr(S \ge \gamma ) \le \frac{\mathbb{E}[S^c]}{\gamma^c}$ for any $c>0$. We are now left with needing an upper bound on $\mathbb{E}[S^c]$. 

\begin{lemma}
\label{lem: expect-S-C}
For any $0< c < \log n$, we have $\mathbb{E}[S^c] \le  (C_t)^c $
\end{lemma}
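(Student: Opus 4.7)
The plan is to reduce the moment bound on the random variable $S = \mathbb{E}_T[|\W(T)|]$ (which is a function of the random noise matrix $R$) to the pointwise control on $|\W(T)|$ for each fixed partition $T$ that was already obtained in Lemma~\ref{lem: first fixed T}. The key tool is Jensen's inequality, which lets us push the power $c$ inside the partition-expectation so we never have to control the correlations between different partitions $T$.

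Concretely, I would first observe that for any integer $c \ge 1$ the map $x\mapsto x^c$ is convex on $[0,\infty)$, so Jensen's inequality (applied to the uniform distribution over partitions $T$) gives
\[
S^{c} \;=\; \bigl(\mathbb{E}_{T}\,|\W(T)|\bigr)^{c} \;\le\; \mathbb{E}_{T}\bigl[|\W(T)|^{c}\bigr].
\]
Taking expectation over $R$ and swapping the two (independent) expectations by Fubini yields
\[
\mathbb{E}_{R}[S^{c}] \;\le\; \mathbb{E}_{T}\bigl[\mathbb{E}_{R}|\W(T)|^{c}\bigr],
\]
so it suffices to bound $\mathbb{E}_R|\W(T)|^{c}$ uniformly over all fixed $T$.

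For a single fixed $T$, Lemma~\ref{lem: first fixed T} says $|\W(T)|\le C_t$ with probability $1-n^{-3(\log n)^{2}}$, while Fact~\ref{fact-W:max} gives the deterministic bound $|\W(T)|\le (p-q)n^{t}$. Splitting the expectation on the good event and its complement,
\[
\mathbb{E}_{R}|\W(T)|^{c} \;\le\; C_{t}^{c} \;+\; n^{-3(\log n)^{2}}\cdot \bigl((p-q)n^{t}\bigr)^{c}.
\]
For $c,t\le \log n$ and $p-q\le 1$ we have $((p-q)n^{t})^{c}\le n^{tc}\le n^{(\log n)^{2}}$, so the correction term is at most $n^{-2(\log n)^{2}}$, which is super-polynomially smaller than $C_{t}^{c}$ in the parameter regime of Theorem~\ref{thm: main} (where $C_{t}\ge 1$). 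Hence $\mathbb{E}_{R}|\W(T)|^{c}\le (C_{t})^{c}(1+o(1))$, and averaging over $T$ preserves the bound, giving the claimed $\mathbb{E}[S^{c}]\le (C_{t})^{c}$ (absorbing the $1+o(1)$ into the constants hidden in $C_{t}$).

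The main obstacle, and the point of the whole setup, is precisely that one cannot afford a union bound over all partitions $T$ because there are exponentially many of them; routing through Jensen and then bounding a single-partition moment is what bypasses this. A secondary delicate point is matching the stated bound $(C_{t})^{c}$ exactly — one has to verify that the ``bad event'' contribution truly is negligible, which rests on the super-polynomial failure probability $n^{-3(\log n)^{2}}$ from Lemma~\ref{lem: first fixed T} dominating the only-polynomial deterministic bound $(p-q)n^{t}$ for all $c\le\log n$. Once this is in place, Markov's inequality applied to this moment with $c\approx\log n$ will produce the desired tail bound on $S$ and hence on $|\W|$.
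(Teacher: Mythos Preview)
Your argument is correct and relies on exactly the same ingredients as the paper (Lemma~\ref{lem: first fixed T} for the high-probability bound on a single partition, Fact~\ref{fact-W:max} for the crude deterministic bound, and the super-polynomial failure probability to kill the bad-event contribution). The one difference is in how the $c$-th power is handled: the paper expands $S^c=(1/\beta)^c\sum_{T_1,\dots,T_c}\prod_i|\W(T_i)|$ explicitly and then bounds each cross-term $\mathbb{E}\bigl[\prod_i|\W(T_i)|\bigr]$ by a union bound over the $c$ events $\{|\W(T_i)|>C_t\}$, whereas you short-circuit this via Jensen's inequality to reduce directly to $\mathbb{E}_R|\W(T)|^c$ for a single $T$. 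Your route is a genuine simplification---it avoids the cross-term bookkeeping entirely---and loses nothing, since the single-partition moment bound you obtain is exactly $(C_t)^c$ plus the same negligible correction $n^{-3(\log n)^2}((p-q)n^t)^c\le n^{-2(\log n)^2}$ that the paper gets. The only cosmetic point is your remark about ``absorbing the $1+o(1)$ into the constants hidden in $C_t$'': $C_t$ is an explicit quantity with no hidden constants, so more precisely one checks (as the paper does) that the correction term is at most $n^{-(\log n)^2}$, which is smaller than $C_t^c$ in the regime of Theorem~\ref{thm: main}.
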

\begin{proof}
We have 
\begin{align*}
\mathbb{E}[S^c] = \mathbb{E} \left[ \left( 1/\beta \sum_{T} |\W(T)| \right)^c \right] = (1/\beta)^c \sum_{T_1, \ldots ,T_c}
\mathbb{E} \left[ |\W(T_1)| \cdot |\W(T_2)| \cdots |\W(T_c)| \right]
\end{align*}
  
We now analyze
$\mathbb{E} \left[ |\W(T_1)| \cdot |\W(T_2)| \cdots |\W(T_c)| \right]$.
From Lemma~\ref{lem: first fixed T} we have 
that for any fixed $T$,
$\Pr(|\W(T)| \ge C_t ) \le n^{-3 (\log n)^2}$.
This implies 
\[
\Pr \left( \prod_{i \in [c]}\abs{\W(T_i)} \ge C_t^c \right) \le c \cdot n^{-3 (\log n)^2} \le n^{-2 (\log n)^2}
\]
From Fact~\ref{fact-W:max} we have $|\W(T)| \le (p-q)n^t$. 
Then using $ct \le (\log n)^2$ 
we get that for  $T_1, \ldots ,T_c$,
\[
\mathbb{E} \left[ |\W(T_1)| \cdot |\W(T_2)| \cdots |\W(T_c)| \right] \le (1-n^{-2 (\log n)^2})(C_t)^c 
+ n^{-2 (\log n)^2} (p-q)^cn^{ct}
\le (C_t)^c, 
\]
as $c,t \le \log n$. Since there are a total of $\beta^c$ terms, the lemma then follows.
\end{proof}

Then combining with higher moment method we have the following result by setting $\gamma=32 \cdot \log n \cdot C_t$ and $c=\log n$.

\begin{corollary}
\label{cor:expect-S}
For any $\ell_0, \ell_{t+1}$ and $t<\log n$ we have $|\W| \le t^t \cdot 32 \cdot \log n \cdot C_t$
with probability $1-\log n^{\log n}n^{-5}$. 
\end{corollary}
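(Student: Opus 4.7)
The plan is to combine the higher-moment bound on $S := \mathbb{E}_T[|\W(T)|]$ from Lemma~\ref{lem: expect-S-C} with Markov's inequality, and then transfer the concentration on $S$ back to a concentration on $|\W|$ via Lemma~\ref{lem: W-total}.

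First I would apply Markov's inequality to the random variable $S^c$: for any $\gamma > 0$ and any positive integer $c$,
\[
\Pr(S \ge \gamma) \;=\; \Pr(S^c \ge \gamma^c) \;\le\; \frac{\mathbb{E}[S^c]}{\gamma^c} \;\le\; \left(\frac{C_t}{\gamma}\right)^{c},
\]
where the final inequality uses Lemma~\ref{lem: expect-S-C}.

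Next I would choose $c$ as large as the lemma permits (essentially $c = \log n$) and pick $\gamma = 32 \log n \cdot C_t$, so that $C_t/\gamma = 1/(32 \log n)$ and therefore $(C_t/\gamma)^c \le (32 \log n)^{-\log n}$, which is far smaller than $n^{-5}$. This gives $S \le 32 \log n \cdot C_t$ with probability at least $1 - n^{-5}$ (or the stated probability, absorbing any leftover factors).

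Finally, Lemma~\ref{lem: W-total} gives the deterministic inequality $|\W| \le t^t \cdot S$, so the high-probability bound on $S$ immediately yields $|\W| \le t^t \cdot 32 \log n \cdot C_t$ on the same event, which is the statement of the corollary.

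I do not expect a real obstacle here: all of the heavy lifting (Chernoff bound on $|\W(T)|$ for a single $T$ in Lemma~\ref{lem: first fixed T}, and the $c$-th moment estimate in Lemma~\ref{lem: expect-S-C}) has already been done. The only minor bookkeeping issue is that Lemma~\ref{lem: expect-S-C} is stated for $c < \log n$ rather than $c = \log n$; this is harmless and can be handled by taking $c = \lfloor \log n \rfloor$ or $c = \log n - 1$, since the resulting bound $(32 \log n)^{-(\log n - 1)}$ is still comfortably smaller than the failure probability claimed.
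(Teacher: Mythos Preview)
Your proposal is correct and follows essentially the same route as the paper: apply Markov's inequality with the moment bound from Lemma~\ref{lem: expect-S-C}, set $c=\log n$ and $\gamma = 32\log n \cdot C_t$, and then multiply by $t^t$ via Lemma~\ref{lem: W-total}. Your observation that $(32\log n)^{-\log n} = n^{-5}(\log n)^{-\log n}$ (taking $\log$ base~2) is exactly the computation the paper has in mind, and your remark about $c<\log n$ versus $c=\log n$ is a harmless bookkeeping point.
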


This completes analysis of the summation of all monomials where $\ell_1,\ldots ,\ell_t$ are all different.
Next we extend our proof to a generic $X$.

\subsection*{Monomials with arbitrary individual degrees}

Next we bound the absolute value of the sum of the monomials with any index ordering $X \in \mX$.
Recall the definition
\[
\Z(X)=
\sum_{\substack{ \ell_1, \ldots , \ell_{t'} \in ([n])^{t'} \\ \text{ s.t } \ell_{i_1} \ne \ell_{i_2} \forall i_1, i_2 \in [t'] }} R_{a,\ell_{X[1]}}R_{\ell_{X[1]},\ell_{X[2]}} \cdots R_{\ell_{X[t-1]},\ell_{X[t]}}L_{\ell_{X[t]},b}
\]
and that $t':=\max \{ X[1], \ldots , X[t]\}$.
Here we define the representative monomial 
\[
\mP{t'}(1,\ldots,t', \ell_1, \ldots, \ell_{t'})
:= R_{a,\ell_{X[1]}}R_{\ell_{X[1]},\ell_{X[2]}} \cdots R_{\ell_{X[t-1]},\ell_{X[t]}}L_{\ell_{X[t]},b}
\]
Let $\Ell_S:=\{\ell_i\}_{i \in S}$ for any $S \subseteq [t']$. Then we have
\[
\Z(X)=
\sum_{\substack{ \ell_1, \ldots , \ell_{t'} \in ([n])^{t'} \\ \text{ s.t } \ell_{i_1} \ne \ell_{i_2} \forall i_1, i_2 \in [t'] }}
\mP{t'} \left( [t'],\Ell_{[t']} \right)
\]
Furthermore, we fix all the indices $\ell_i$ that are to be assigned $a$. Since at most one of the 
index can be assigned as $a$, we define by $\Z(X,i)$ as the sum of monomials of $\Z(X)$ for which $\ell_i=a$. 
Furthermore, $Z(X,0)$ is the case where none of the indices are $a$. Then 
\begin{equation}
\label{eq: z-break-up}
\abs{(R^tL)_{a,b}} \le \sum_{X \in \mX} \sum_{0 \le i \le t'} \abs{\Z(X,i)} 
\end{equation}

Here we upper bound $\abs{\Z(X,0)}$ for any arbitrary $X$ using our analysis of $W^t_{a,b}$ and then extend it to complete the proof.
First we describe a sketch of our proof.
\paragraph{Sketch of proof}

We start with a random partition of $[n] \setminus \{a\}$ into $t'$ many sets $T=\{T_1, \ldots , T_{t'}\}$ and upper bound the term
\[
\Z(X,0,T):=\sum_{\substack{\ell_i \in T_i \\ 1 \le i \le t'}} 
\mP{t'} \left( [t'],\Ell_{[t']} \right)
\]
Then, as before we have 
\[
\Z(X,0)= (t')^{t'} \cdot \underset{T}{\mathbb{E}} \left[ \Z(X,0,T) \right]
\implies
\abs{Z(X,0)} \le t^t \cdot \underset{T}{\mathbb{E}} \left[\abs{\Z(X,0,T)} \right]
\]

We upper bound $|\Z(X,0,T)|$ with high probability and then the upper bound on expectation follows from the higher moment method. Let us now focus on
$\Z(X,0,T)$.

In case of $W^t_{a,b}$, we start with a sum of $t+1$ degree monomials, and recursively bound a certain sum based on the index $\ell_i$ at each round, starting from $\ell_t$ and proceeding in the decreasing order. After each round, we are left with bounding a sum of monomials of degree one less than the previous round. 

In contrast, for $\Z(X,0,T)$ we start with $\ell_{t'}$ and go down. Since some indices may appear in multiple random variables, the degree of the monomial may go down arbitrarily after each step. The crux of the idea is to ensure that after we have taken a sum on $\ell_i$, there are still random variables with $\ell_j, j<i$ as their indices for all $j$. This is ensured by the way our encoding scheme is defined. Now we lay out the details. 

\paragraph{Upper bounding $\abs{\Z(X,0,T)}$.}

Note that in $\Z(X,0,T)$, the domain of each index $\ell_i$ is unique (as $T$ is a disjoint partition of $[n] \setminus \{a\}$). 
We also denote the monomial $\mmP(S,\Ell_S)$ simply as $\mmP_S$ for ease of representation.
Then we have the following definition. 
\begin{definition}
\label{def: P-break}
Let $S_t=[t']$. Then, given the monomial $\mP{t'} \left( {S_{t'},\Ell_{S_{t'}}} \right)$,
\begin{enumerate}
    \item $\mP{i,i}$ denotes the product of variables in $\mP{i}$ that have $\ell_i$ as one of their indices.
    
    \item $\mP{i-1}$ denotes the monomial obtained by removing $\mP{i,i}$ from $\mP{i}$.
\end{enumerate}

That is,
\[
\mP{i}_{S_i}= \mP{i-1}_{S_{i-1}} \cdot \mP{i,i}_{S_i'}
\]

where $S_i$ is the set of indices present in the variables of the monomial $\mP{i}$ and $S_i'$ is the set of indices present in the variables of the monomial $\mP{i,i}$.
\end{definition}

Here note that $\ell_i \notin S_{i-1}$ and if $\deg (\mP{i,i}) >0$, then $\ell_i \in S_i'$.
This in turn implies the following identity.
\begin{equation}
\label{eq: P-iterate}
\mP{t'}_{S_t}= \prod_{1 \le i \le t'}
\mP{i,i}_{S_i'}
\end{equation}

Then we have a simple lemma which is a consequence of our encoding scheme~\ref{def: encoding}.

\begin{lemma}
\label{lem: recurse-degree}
For any $1 \le i \le t'$, 
$\deg \left( \mP{i,i}_{S_i'} \right) \ge 1$.
\end{lemma}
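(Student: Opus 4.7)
The plan is to exhibit, for each $i \in \{1, \ldots, t'\}$, a single factor of the representative monomial $\mP{t'}_{S_{t'}}$ that contains $\ell_i$ and whose remaining representative indices all carry labels strictly less than $i$. By the iterative construction in Definition~\ref{def: P-break}, passing from $\mP{t'}$ down to $\mP{i}$ consists of successively stripping out the variables of $\mP{t'}, \mP{t'-1}, \ldots, \mP{i+1}$ that contain $\ell_{t'}, \ell_{t'-1}, \ldots, \ell_{i+1}$ respectively. A factor whose indices all have labels $\le i$ is never removed during this process, so it remains as a factor of $\mP{i}$; since it contains $\ell_i$, it belongs by definition to $\mP{i,i}_{S_i'}$, yielding $\deg(\mP{i,i}_{S_i'}) \ge 1$.

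To locate such a witness I would invoke the first-appearance property of the encoding in Definition~\ref{def: encoding}: the label $y+1$ is assigned to some position $X[j]$ only after $1, \ldots, y$ have already appeared in the preceding positions of $X$. Thus, letting $j^{*}$ denote the smallest position with $X[j^{*}] = i$, we have $X[1], \ldots, X[j^{*}-1] \in \{1, \ldots, i-1\}$. Now consider the random variable of $\mP{t'}$ sitting at position $j^{*}$: for $j^{*} \ge 2$ it is $R_{\ell_{X[j^{*}-1]}, \ell_i}$, whose two representative indices carry labels $X[j^{*}-1] < i$ and $i$ respectively; for $j^{*} = 1$ (forcing $i = 1$) it is $R_{a,\ell_1}$, whose single representative index has label $1$. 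In either case every index carried by this factor has label $\le i$ and $\ell_i$ appears, so this factor is exactly the witness we want. The boundary possibility $j^{*} = t$ only adds the terminal $L_{\ell_i,b}$ as an additional eligible witness (it too contains only $\ell_i$), so it presents no new difficulty.

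I do not expect a real obstacle. The statement is essentially a bookkeeping consequence of how the encoding assigns labels in Definition~\ref{def: encoding}; the only care needed is to keep the labels in $X$ (which index the monomial's $R$-factors) distinct from the representative indices $\ell_1, \ldots, \ell_{t'}$ (which label the distinct values taken by the original tuple), and to handle the small boundary cases $j^{*} = 1$ and $j^{*} = t$ alongside the generic case.
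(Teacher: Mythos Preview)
Your proposal is correct and rests on the same key observation as the paper: the encoding of Definition~\ref{def: encoding} guarantees that at the \emph{first} position $j^{*}$ where $X$ takes the value $i$, the preceding entry $X[j^{*}-1]$ (if any) lies in $\{1,\dots,i-1\}$, so the factor $R_{\ell_{X[j^{*}-1]},\ell_i}$ (or $R_{a,\ell_1}$ when $j^{*}=1$) involves only labels $\le i$ and therefore survives the removal of all $\ell_m$ with $m>i$.

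The only stylistic difference is packaging. The paper phrases the argument as a downward induction on $i$, arguing at each step that the first-appearance variable of every $\ell_j$ with $j<i$ is strictly left of the first appearance of $\ell_i$ and hence is not stripped out when forming $\mP{i-1}$. You instead give a single direct argument: exhibit the witness factor once and note that it is untouched by every removal step from $t'$ down to $i+1$. Your version is arguably cleaner, since it makes explicit why the witness factor carries no index with label exceeding $i$ (the paper's ``strictly left'' phrasing leaves this half-implicit), but the underlying mechanism is identical.
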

\begin{proof}
This can be proven by induction. 
We start with $\mP{t'}$. Recall that $\mP{t'-1}$ is obtained by removing all variables $R_{\ell_{X[i],X[i+1]}}$ or $L_{X[t],b}$ from $\mP{t'}$ that contain $\ell_{t'}$ as an index. 
Furthermore, in the monomial $\mP{t'}$, $\ell_{t'}$ has the right-most first appearance, as $t'$ has the highest value in the corresponding $X$. Thus, the first variable from left in which $\ell_i, i<t'$ is first present is strictly left of the first appearance of $\ell_{t'}$. Thus, all such variables are present in $\mP{t'-1}$.

Similarly consider any $\mP{i}$ such that it has at least one variable with $\ell_j$ as an index for any $j \le i$. Then if $\mP{i,i}$ is removed, the first variable in which $\ell_j, j<i$ appears is still present in $\mP{i-1}$. 
This completes the proof.

\end{proof}
Furthermore, it is also easy to see that if $X[t]=\alpha$ then $\deg \left( \mP{\alpha,\alpha} \right) \ge 2$, as it is product of at least $L_{\ell_{\alpha},b}$ and some $R_{\ell_{\alpha},\ell_{i'}}$. 
Having established this setup, we now upper bound $\abs{\Z(X,0,T)}$. To this end we define 
\begin{equation}
\label{eq: F-Z}
F(S_i,\Ell_{S_i}):=
\sum_{\ell_{i+1} \in T_{i+1}, \ldots, \ell_{t'} \in T_{t'} } 
\left(\prod_{t' \ge j > i} 
\mP{j,j}(S_j',\Ell_{S_j'})
\right)
\end{equation}
Then, 
\[
F(S_{i-1},\Ell_{S_{i-1}})=
\sum_{\ell_{i} \in T_{i}} \mP{i,i}(S_{i}',\Ell_{S_{i}'})F(S_{i},\Ell_{S_{i}})
\]
and $\Z(X,0,T)=F(S_{0},\Ell_{S_{0}})$ where $S_{0}=\emptyset$. 

We first obtain a bound on sum of variables of the form $\sum_{i \in T_i}R^c_{i,j}$ where $c \ge 2$. We shall use this result as in this part we deal with sum of monomials where the variables may have higher individual degrees. 
\begin{lemma}
\label{lem: chernoff-higher degree}
Let $c \ge 2$ and $T \subseteq [n]$ so 
that $p(1-q) \cdot |T| \ge 1$. 
Then with probability $1-n^{-4 \log(n)^2}$ we have
\[
\abs{\sum_{i \in T} R^c_{i,j}} \le 192 p(1-q) \cdot |T| \cdot (\log n)^3
\]
\end{lemma}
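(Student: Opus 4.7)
My plan is to apply the Chernoff--Hoeffding bound (Theorem~\ref{thm: chernoff}) to the underlying Bernoulli edge indicators after expressing $R^c_{i,j}$ as an affine function of them, then bound the mean and the fluctuation separately, and finally use the hypothesis $p(1-q)|T| \ge 1$ to convert the resulting Bernstein-type $\sqrt{\cdot}$ bound into one linear in $p(1-q)|T|$.

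First, I partition $T$ according to whether $i \sim j$: set $T_p = \{i \in T : i \sim j\}$ and $T_q = T \setminus T_p$, so that within each part the $R_{i,j}$ are i.i.d.\ with Bernoulli parameter $p$ or $q$. Writing $X_i := \mathbf{1}[(i,j) \in E]$, we have $R^c_{i,j} = (-p)^c + \alpha_p X_i$ on $T_p$ with $\alpha_p := (1-p)^c - (-p)^c \in [-1,1]$, and analogously on $T_q$ with some $\alpha_q \in [-1,1]$. Hence
\[
\sum_{i \in T} R^c_{i,j} \;=\; \bigl[|T_p|(-p)^c + |T_q|(-q)^c\bigr] \;+\; \alpha_p \sum_{i \in T_p} X_i \;+\; \alpha_q \sum_{i \in T_q} X_i,
\]
an affine function of two independent Binomial sums.

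Next, since $|R_{i,j}| \le 1$ and $c \ge 2$, I have $|R^c_{i,j}| \le R^2_{i,j}$, so $|\mathbb{E}[R^c_{i,j}]| \le p_{ij}(1-p_{ij}) \le p(1-q)$ (using $p \ge q$); therefore the deterministic expectation piece contributes at most $p(1-q)|T|$ in absolute value. For the random part, I apply Theorem~\ref{thm: chernoff} to each Binomial sum separately. Using $D(p+\varepsilon \| p) \gtrsim \varepsilon^2/p$ and choosing $\varepsilon = \Theta(\sqrt{p(\log n)^3/|T_p|})$ (resp.\ for $q,|T_q|$) forces $e^{-D\cdot |T_p|} \le n^{-4(\log n)^2}$, giving a deviation bound of $O(\sqrt{p|T_p|}(\log n)^{3/2})$ for the first sum and $O(\sqrt{q|T_q|}(\log n)^{3/2})$ for the second. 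Since $|\alpha_p|,|\alpha_q| \le 1$ and $p \ge q$, after a union bound the total fluctuation is $O(\sqrt{p|T|}(\log n)^{3/2})$ with failure probability at most $n^{-4(\log n)^2}$.

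Finally, I use $p(1-q)|T| \ge 1$ together with $1-q \ge 1/4$ (from $q \le 0.75$) to turn the square root into a linear bound: $\sqrt{p|T|} = \sqrt{p(1-q)|T|/(1-q)} \le 2\sqrt{p(1-q)|T|} \le 2\, p(1-q)|T|$. Summing the mean bound and the fluctuation bound and absorbing universal constants into $192$ gives the claim. The only real obstacle is the bookkeeping of constants and the careful use of the assumption $p(1-q)|T|\ge 1$ to pass from the Bernstein-type $\sqrt{\cdot}$ fluctuation to a bound linear in $p(1-q)|T|$; no idea beyond a standard Chernoff application on the underlying Bernoullis is required.
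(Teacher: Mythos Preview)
Your proposal is correct and follows essentially the same approach as the paper: bound $|\mathbb{E}[R_{i,j}^{c}]|\le p(1-q)$, apply a Chernoff/Hoeffding bound to obtain a fluctuation of order $\sqrt{p|T|}\,\mathrm{polylog}(n)$, and then use the hypothesis $p(1-q)|T|\ge 1$ (together with $1-q\ge 1/4$) to replace the square-root term by one linear in $p(1-q)|T|$. The only cosmetic difference is that you explicitly split $T$ into $T_p$ and $T_q$ and rewrite $R_{i,j}^c$ as an affine function of the underlying Bernoulli indicator before invoking Theorem~\ref{thm: chernoff}, whereas the paper applies the concentration bound directly to the two-valued variables $R_{i,j}^c$; your version is arguably cleaner since it handles the non-identically-distributed summands more carefully, but the substance is the same.
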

\begin{proof}
Recall that $R_{i,j}$ is a zero mean random variable with one of the two distributions. 

\begin{enumerate}
    \item If $i \sim j$, then $R^c_{i,j}$ is $(1-p)^c$ with probability $p$, and $(-p)^c$ with probability $1-p$. Then 
    \[
    E[R^c_{i,j}] = (1-p)^cp+(-p)^c(1-p)=
    p(1-p) \cdot ((1-p)^{c-1}-(-p)^{c-1})
    \]
    Since $\abs{(1-p)^{c-1}-(-p)^{c-1}}\le 1$, 
    we have $\abs{E[R^c_{i,j}]} \le p(1-p)$.
    
    \item If $i \nsim j$ then $R^c_{i,j}$ is 
    $(1-q)^c$ with probability $q$ and $(-q)^c$ otherwise. Then $\abs{ E[R^c_{i,j}]} \le q(1-q)$.
\end{enumerate}

Thus, the random variable is two valued with the two points being less than $1$ distance apart, and has expectation upper bounded as $|E[R^c_{i,j}]| \le p(1-q)$. Then Chernoff's theorem (Theorem~\ref{thm: chernoff}) implies that with probability $1-n^{-4 (\log n)^2}$,
\begin{align*}
&-p(1-q)|T| - 192\sqrt{p}\sqrt{|T|} (\log n)^3 \le 
\sum_{i \in T} R^c_{i,j} \le p(1-q)|T| + 192\sqrt{p}\sqrt{|T|} (\log n)^3
\\
\implies 
&
\abs{\sum_{i \in T} R^c_{i,j}} \le p(1-q)|T| + 192\sqrt{p}\sqrt{|T|} (\log n)^3
\\
\implies 
&
\abs{\sum_{i \in T} R^c_{i,j}} \le 
192p(1-q)|T| (\log n)^3
\end{align*}

\end{proof}

We then have the following result

\begin{lemma}
\label{lem: Z-iteration}
Let $\deg( \mP{i,i})=w_i$.
If $\abs{F(S_i,\Ell_{S_i})} \le \beta$ with probability $p_{\beta}$, then
\begin{enumerate}
    \item If $X[t]=i$, then with probability $p_{\beta}(1-n^{-3 (\log n)^2})$
    \[
    \abs{F(S_{i-1},\Ell_{S_{i-1}})} \le \beta \cdot 
    96(p-q)\sqrt{p}\sqrt{\smax}(\log n)^3 \cdot (96\sqrt{p}\sqrt{n}(\log n)^3)^{w_i-1}
    \]
    
    \item Otherwise with probability $p_{\beta} (1-n^{-3 (\log n)^2})$
    \[
    \abs{F(S_{i-1},\Ell_{S_{i-1}})} \le \beta \cdot    (96\sqrt{p}\sqrt{n}(\log n)^3)^{w_i}
    \]

\end{enumerate}

\end{lemma}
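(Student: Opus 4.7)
\textbf{Proof plan for Lemma~\ref{lem: Z-iteration}.} My plan is to reduce the bound on $|F(S_{i-1}, \Ell_{S_{i-1}})| = |\sum_{\ell_i \in T_i} \mP{i,i}(S_i',\Ell_{S_i'}) F(S_i, \Ell_{S_i})|$ to a Chernoff/Bernstein-type concentration on the $\ell_i$-sum via a clean conditioning argument. The key observation is that the random variables appearing in $\{\mP{i,i}(\ell_i)\}_{\ell_i \in T_i}$ are disjoint from those appearing in $F(S_i, \Ell_{S_i})$: every $R$-factor of $\mP{i,i}(\ell_i)$ has the form $R_{\alpha, \ell_i}$ with $\alpha \in \{a\} \cup \bigcup_{m < i} T_m$ and $\ell_i \in T_i$, while every $R$-factor appearing in $F$ lies in some $\mP{j,j}$ with $j > i$ and thus has at least one index in $T_j \subseteq \bigcup_{j > i} T_j$; these index sets are pairwise disjoint by construction of $T$. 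Moreover, across distinct $\ell_i \ne \ell_i' \in T_i$ the variables of $\mP{i,i}(\ell_i)$ and $\mP{i,i}(\ell_i')$ involve different second indices and are themselves independent.

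Given this, I condition on all randomness underlying $F$. On the event $|F| \le \beta$ (which holds with probability $p_\beta$), $F(S_i, \Ell_{S_i})$ becomes a deterministic function of $\Ell_{S_i}$ bounded by $\beta$, while $\{\mP{i,i}(\ell_i)\}_{\ell_i \in T_i}$ remain mutually independent. I then write $\mP{i,i}(\ell_i) = L_{\ell_i,b}^{\mathbf{1}[X[t]=i]} \prod_{k=1}^K R_{\alpha_k, \ell_i}^{e_k}$ with $\sum_k e_k$ equal to $w_i$ in Case 2 and $w_i - 1$ in Case 1. If some $e_k = 1$, I pull out the zero-mean factor $R_{\alpha_k, \ell_i}$ and bound the remaining $R$-factors by $1$ in absolute value; the resulting sum is a sum of independent zero-mean terms of magnitude at most $\beta$ and variance at most $\beta^2 p(1-q)$, and Chernoff/Bernstein yields deviation $O(\beta \sqrt{p n} (\log n)^3)$. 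If instead every $e_k \ge 2$, I split each $R^{e_k}$ into its expectation (bounded by $p(1-q)$, as in the proof of Lemma~\ref{lem: chernoff-higher degree}) and a fluctuation part; the expectation piece contributes at most $(p(1-q))^K n \beta \le p^K n \beta$, and since $w_i = \sum_k e_k \ge 2K$ one has $p^K n \le (p n)^{w_i/2}$, fitting inside the claimed bound.

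For Case 2 ($X[t] \ne i$), combining the concentration factors across the $w_i$ $R$-degrees yields $\beta \cdot (96 \sqrt{p}\sqrt{n}(\log n)^3)^{w_i}$. For Case 1 ($X[t] = i$), the factor $L_{\ell_i,b}$ vanishes unless $\ell_i$ lies in the cluster of $b$, so the sum effectively ranges over at most $\smax$ values of $\ell_i \in T_i$, each carrying a deterministic factor $p-q$; this replaces one $\sqrt{n}$ by $\sqrt{\smax}$ and inserts $(p-q)$, producing the stated bound $\beta \cdot 96(p-q)\sqrt{p}\sqrt{\smax}(\log n)^3 \cdot (96 \sqrt{p}\sqrt{n}(\log n)^3)^{w_i-1}$. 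Finally, taking a union bound over the at most $n^{i-1} \le n^{\log n}$ possible tuples $\Ell_{S_{i-1}}$ converts the per-tuple Chernoff failure $n^{-4 (\log n)^2}$ into the overall $n^{-3 (\log n)^2}$ error.

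The main obstacle will be handling the mixed case in which $\mP{i,i}$ contains both a degree-one factor and higher-degree $R$-factors, in a way that is uniform across all possible degree profiles. I plan to always extract a single degree-one $R$-factor for the Chernoff step when one exists and simply bound the remaining $R^{e_k}$ by $1$ pointwise; this is loose but sufficient because the resulting estimate $O(\beta \sqrt{p n}(\log n))$ is already dominated by the claimed product for any $w_i \ge 1$, while the mean-plus-fluctuation decomposition is reserved for the degenerate all-$e_k\ge 2$ case. Careful bookkeeping of the probability after the union bound over $\Ell_{S_{i-1}}$ is also needed, but stays within budget because $i \le t \le \log n$.
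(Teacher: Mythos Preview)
Your plan is correct and follows essentially the same route as the paper: write $F(S_{i-1},\Ell_{S_{i-1}})=\sum_{\ell_i\in T_i}\mP{i,i}(\ell_i)\,F(S_i,\Ell_{S_i})$, condition on the randomness inside $F$ so that each $F(S_i,\cdot)$ becomes a deterministic weight bounded by $\beta$, and then apply a Chernoff-type bound to the remaining sum of independent terms indexed by $\ell_i\in T_i$. Your explicit justification that the $R$-variables in $\mP{i,i}(\ell_i)$ are disjoint from those in $F$ (because one index of every factor of $\mP{i,i}$ lies in $T_i$ while every factor appearing in $F$ has an index in some $T_j$ with $j>i$) is in fact more careful than the paper, which simply asserts the analogy with Lemma~\ref{lem:F}.

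The only real difference is the case split. The paper distinguishes just $w_i=2$ versus $w_i>2$: in the former it applies Lemma~\ref{lem:F} directly, and in the latter it bounds $|\mathbb{E}[\mP{i,i}]|\le p(1-q)$ and invokes (the argument of) Lemma~\ref{lem: chernoff-higher degree} to obtain $|\sum_{\ell_i}\mP{i,i}F|\le \beta\cdot 96(p-q)p(1-q)\smax(\log n)^3$, which it then relaxes to the stated product form using $\smax\le\sqrt{\smax}\sqrt{n}$. Your split into ``some $e_k=1$'' versus ``all $e_k\ge 2$'' is slightly finer and makes the source of the zero-mean property transparent, at the price of having to carry out the mean-plus-fluctuation decomposition yourself in the second sub-case. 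Both routes land on the same bound; the paper's is shorter because it absorbs that decomposition into the already-proved Lemma~\ref{lem: chernoff-higher degree}, while yours is more self-contained.
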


\begin{proof}

Equation~\eqref{eq: F-Z} implies that 
\[
F(S_{i-1},\Ell_{S_{i-1}})=
\sum_{\ell_{i} \in T_{i}} \mP{i,i}(S_{i}',\Ell_{S_{i}'})F(S_{i},\Ell_{S_{i}})
\]

We analyze the first case here, and the second case then follows. As in Lemma~\ref{lem:F}, we $\mP{i,i}(S_{i}',\Ell_{S_{i}'})F(S_{i},\Ell_{S_{i}})$ is a zero mean random variable. And since $X[t]=i$, then for some $S \subseteq [i]$ and $c_j$ being positive integers,
\[
\mP{i,i}= \prod_{j \in S} R^{c_j}_{\ell_i,\ell_j} \cdot L_{\ell_i,b}
\]

Here note that $w_i \ge 2$ as $\mP{i,i}$ contains $L_{\ell_i,b}$ and at least one variable from $R$.
The following events then happen with probability $(1-n^{-3 (\log n)^2})p_{\beta}$.

\begin{itemize}
\item 
If $w_i=2$, then $\mP{i,i}=R_{\ell_i,\ell_{i'}}L_{\ell_i,b}$ for some $i'$ and 
$\abs{F(S_{i-1},\Ell_{S_{i-1}})}$ can be directly upper bounded by
$\beta \cdot 96(p-q)\sqrt{p}\sqrt{\smax}(\log n)^3$ from Lemma~\ref{lem:F} (scaling the variables with $1/\beta$ and applying Chernoff bound).

\item 
Otherwise, if $w_i>2$, $\abs{\mathbb{E}[\mP{i,i}]} \le p(1-q)$. Then Lemma~\ref{lem: chernoff-higher degree} implies that with probability $p_{\beta} \cdot (1-n^{-3 (\log n)^2})$,
\begin{align*}
\abs{F(S_{i-1},\Ell_{S_{i-1}})}&
\le \beta \cdot 96 (p-q) \cdot p(1-q) \smax (\log n)^3 
\\ &
\le 96(p-q)\sqrt{p}\sqrt{\smax}(\log n)^3 \cdot (96\sqrt{p}\sqrt{n}(\log n)^3)^{w_i-1} &\text{[As $\smax \le \sqrt{\smax}\sqrt{n}$]}
\end{align*}
\end{itemize}

The case for $X[t] \ne i$ also follows in the same manner.
\end{proof}

This result gives us an upper bound on $\abs{\Z(X,0,T)}$ by iterating $t'-1$ times through Lemma~\ref{lem: Z-iteration}.
\begin{corollary}
\label{lem-Xbar-Z-first-bound}
Let $t' \le \log n$ and $T=\{T_1, \ldots ,T_{t'} \}$ be any fixed partition. Recall that $C_t=(96\sqrt{p} (p-q) \sqrt{\smax} (\log n)^3 ) \cdot (96 \sqrt{p} \sqrt{n} (\log n)^3 )^{t-1}$. Then we have
\[
\Pr \left(\abs{\Z(X,0,T)} \le C_t \right) \ge 1-n^{-2 (\log n)^2}
\]
\end{corollary}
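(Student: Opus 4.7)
The plan is to telescope Lemma~\ref{lem: Z-iteration} downward from the trivial base case $F(S_{t'}, \Ell_{S_{t'}}) = 1$ (an empty product, bounded by $1$ deterministically). I apply the lemma with $\beta$ set to the previous bound, for $i = t', t'-1, \ldots, 1$ in turn, inductively controlling $F(S_{i-1}, \cdot)$ in terms of $F(S_i, \cdot)$. After $t'$ applications I have a bound on $F(S_0, \Ell_{S_0}) = \Z(X, 0, T)$.

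Exactly one of these iterations --- the step $i = X[t]$ --- invokes Case 1 of the lemma, because the $L$-variable $L_{\ell_{X[t]}, b}$ lives in $\mP{X[t], X[t]}$. The remaining $t' - 1$ iterations invoke Case 2, each consuming a pure $R$-chunk of $\mP{i,i}$. Multiplying the per-step factors and collapsing the product using the identity $\sum_{i=1}^{t'} w_i = t + 1$ --- which holds because, by Lemma~\ref{lem: recurse-degree}, every one of the $t$ $R$-factors and the single $L$-factor of $\mP{t'}$ is absorbed into exactly one $\mP{i,i}$ --- yields the target. The Case-1 step supplies the distinguishing $(p-q)\sqrt{\smax}$ contribution (with $\sqrt{\smax}$ in place of $\sqrt{n}$ because the $L$-column is $\smax$-sparse), while the remaining factors compound into $K^{t-1}$ with $K := 96\sqrt{p}\sqrt{n}(\log n)^3$, delivering $C_t$.

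For the probability, each of the $t' \le \log n$ applications of Lemma~\ref{lem: Z-iteration} succeeds with probability at least $1 - n^{-3(\log n)^2}$, so a union bound across the iterations gives overall failure probability at most $\log n \cdot n^{-3(\log n)^2} \le n^{-2(\log n)^2}$, as claimed.

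The main obstacle will be the bookkeeping of the $K$-powers across Case-1 versus Case-2 steps: the $L$-variable's $\sqrt{\smax}$ saving must be registered exactly once, and higher-degree chunks ($w_i \ge 3$) must fit uniformly under the $K^{w_i-1}$ expression --- which I expect to handle by invoking Lemma~\ref{lem: chernoff-higher degree} alongside the slack $\smax \le \sqrt{\smax}\sqrt{n}$ that the proof of Lemma~\ref{lem: Z-iteration} already exploits to merge the $w_i > 2$ and $w_i = 2$ sub-cases into one uniform formula.
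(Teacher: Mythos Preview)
Your proposal is correct and follows exactly the paper's approach: the paper states (without further detail) that the corollary follows ``by iterating $t'-1$ times through Lemma~\ref{lem: Z-iteration},'' and you have simply fleshed out that telescoping argument, including the base case $F(S_{t'},\cdot)=1$, the Case-1/Case-2 split at $i=X[t]$, the degree identity $\sum_i w_i = t+1$, and the union bound over the $t'\le \log n$ steps. One small caution on the arithmetic: telescoping the \emph{stated} bounds of Lemma~\ref{lem: Z-iteration} literally yields an exponent of $K^{(\sum_i w_i)-1}=K^{t}$ rather than the $K^{t-1}$ in $C_t$; to land exactly on $C_t$ you should invoke the sharper $w_i=2$ sub-case established inside that lemma's proof (which contributes no extra $K$ factor), as you allude to in your final paragraph.
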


Finally, to bound the expectation, as before we have
\begin{equation}
\label{eq:Z(X,0,T)}
|\Z(X,0)| \le (t')^{t'} 
\underset{T}{\mathbb{E}} [|\Z(X,0,T)|]    
\end{equation}

Once we have an upper bound for an arbitrary $T$, the same analysis as for $\W$ follows for obtaining $\mathbb{E}_T[|\Z(X,0,T)|]$, where in we get $\mathbb{E}_T[|\Z(X,0,T)|] \le 32 \log n \cdot C_t$ and then get the following bound on $Z(X,0)$ from Equation~\ref{eq:Z(X,0,T)}
(we skip redoing the calculation for brevity). 
\begin{corollary}
\label{cor: Z(X,0)}
\[
\Pr \left( \abs{Z(X,0)} \le t^{t} \cdot 32 \log n \cdot C_t \right) \ge 1- (\log n)^{\log n}n^{-5}
\]
\end{corollary}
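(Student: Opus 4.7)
The plan is to mirror, almost verbatim, the argument running from Lemma~\ref{lem: expect-S-C} through Corollary~\ref{cor:expect-S}, now applied to $\Z(X,0,T)$ in place of $\W(T)$. Set $S := \mathbb{E}_T[\,\abs{\Z(X,0,T)}\,]$ where $T$ ranges over uniformly random partitions of $[n]\setminus\{a\}$ into $t'$ parts. From Equation~\eqref{eq:Z(X,0,T)} and $(t')^{t'}\le t^{t}$, it suffices to prove
\[
\Pr\!\left( S \ge 32\log n \cdot C_t \right) \le (\log n)^{\log n} n^{-5}.
\]

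I would control $S$ by the higher-moment-plus-Markov device used for $\W$. Writing $S = (1/\beta)\sum_T \abs{\Z(X,0,T)}$ for $\beta$ the number of partitions and expanding the $c$-th power,
\[
\mathbb{E}[S^c] \;=\; \frac{1}{\beta^{c}} \sum_{T_1,\dots,T_c} \mathbb{E}\!\left[\,\prod_{i=1}^{c} \abs{\Z(X,0,T_i)}\,\right].
\]
For any fixed tuple $(T_1,\dots,T_c)$, Corollary~\ref{lem-Xbar-Z-first-bound} plus a union bound guarantees that simultaneously all $\abs{\Z(X,0,T_i)} \le C_t$ with probability at least $1 - c\cdot n^{-2(\log n)^2}$. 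On the complementary event, the trivial counting bound $\abs{\Z(X,0,T_i)} \le (p-q)\cdot n^{t'}$ (a sum of at most $n^{t'}$ monomials, each of absolute value at most $p-q\le 1$) gives $\prod_i \abs{\Z(X,0,T_i)} \le n^{ct'}$. With $c,t'\le \log n$ we have $ct'\le (\log n)^2$, so the bad-event contribution $c\cdot n^{-2(\log n)^2}\cdot n^{(\log n)^2}$ is negligible compared with $C_t^{c}$. Hence $\mathbb{E}[\prod_i |\Z(X,0,T_i)|] \le 2 C_t^{c}$, and summing over the $\beta^{c}$ tuples yields $\mathbb{E}[S^{c}] \le 2 C_t^{c}$.

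Finally, Markov's inequality with $c = \log n$ and $\gamma = 32\log n\cdot C_t$ gives
\[
\Pr(S \ge \gamma) \;\le\; \frac{\mathbb{E}[S^{c}]}{\gamma^{c}} \;\le\; \frac{2}{(32\log n)^{\log n}},
\]
which is bounded by $(\log n)^{\log n} n^{-5}$ for all sufficiently large $n$. Combined with the deterministic inequality $\abs{\Z(X,0)} \le t^{t}\cdot S$, this is exactly the claim.

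The only even mildly delicate point is verifying that the bad-event contribution to the product moment is indeed negligible; this relies on the failure probability $n^{-2(\log n)^2}$ being doubly sharper than the trivial product bound $n^{(\log n)^2}$, which holds under the assumption $c,t'\le \log n$. Once that is checked, the argument is the straight analogue of the one already executed for $\W$, so no new technical ideas are required beyond those introduced in Section~\ref{sec:proof}.
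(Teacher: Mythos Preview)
Your proposal is correct and follows essentially the same route as the paper: the paper explicitly says that ``the same analysis as for $\W$ follows'' and skips the calculation, while you have spelled out exactly that analogue of Lemma~\ref{lem: expect-S-C} and Corollary~\ref{cor:expect-S} with $\Z(X,0,T)$ in place of $\W(T)$. The only cosmetic difference is your harmless extra factor of~$2$ in the moment bound $\mathbb{E}[S^{c}]\le 2C_t^{c}$, which still yields the stated probability.
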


The same bound follows for any $\Z(X,i)$ as we merely fix one of the indices to a single value ($a$) and then randomly partition $[n] \setminus \{a\}$ into $T'$ many sets. 
That is, Corollary~\ref{cor: Z(X,0)} implies 
\begin{equation}
\label{eq: Z(X,i)}
\Pr \left( \abs{Z(X,i)} \le t^t \cdot 32 \log n\cdot C_t  \right) \ge 1- (\log n)^{\log n}n^{-5} 
\end{equation}
We can now summarize to complete the proof.

\paragraph{Proof of Lemma~\ref{lem:last-step}}
Combining Equations~\eqref{eq: z-break-up} and \eqref{eq: Z(X,i)} via an union bound on all $(t+1)\cdot t^t$ possibilities, we get with 
probability $1-\OO(n^{-4})$,
\begin{align*}
&|(R^tL)_{a,b} 
\le 32 \cdot (t+1) \cdot t^{2t} \cdot  \log n \cdot C_t
\\ &
\le 
32 \cdot 2^t \cdot (t^2)^t \cdot \log n \cdot
(96\sqrt{p} (p-q) \sqrt{\smax} (\log n)^3 ) \cdot (96 \sqrt{p} \sqrt{n} (\log n)^3 )^{t-1}
\\ &
\le 
32 \cdot 2^{t} \cdot ((\log n)^2)^{t} \log n \cdot
(96\sqrt{p} (p-q) \sqrt{\smax} (\log n)^3 ) \cdot (96 \sqrt{p} \sqrt{n} (\log n)^3 )^{t-1}
&\text{[Substituting $t \le \log n$]}
\\
&
\le 
128 \cdot 96\sqrt{p} (p-q) \sqrt{\smax} (\log n)^6 
\cdot (192 \sqrt{p} \sqrt{n} (\log n)^5 )^{t-1}
\\
&
\le
12228 (p-q)(\log n)^6
\cdot  ((p-q) \sqrt{\smax} ) 
\cdot ((p-q)\smax)^{t-1} 
&\text{[ $(p-q)\smax \gg 192 \sqrt{p} \sqrt{n} (\log n)^5$]}
\end{align*}

This completes the proof.

\end{document}